\newcommand{\pdfgraphics}{\ifpdf\DeclareGraphicsExtensions{.pdf,.jpg}\else\fi}
\definecolor{hanblue}{rgb}{0.27, 0.42, 0.81}
\definecolor{red}{rgb}{1.0, 0.0, 0.0}
\numberwithin{equation}{section}
\theoremstyle{plain}
\newtheorem{thm}{Theorem}[section]
\newtheorem{lem}[thm]{Lemma}
\newtheorem{prop}[thm]{Proposition}
\newtheorem{cor}[thm]{Corollary}
\theoremstyle{definition}
\newtheorem{defn}[thm]{Definition}
\theoremstyle{remark}
\newtheorem{rem}[thm]{Remark}
\numberwithin{equation}{section}
\def\R{\mathbb R}
\newcommand{\intbar}{\etaathop{\int\etaakebox(-13.5,0){\rule[4pt]{.7em}{0.3pt}}
\kern-6pt}\nolimits}
\newcommand{\be}{\begin{equation}}
\newcommand{\ee}{\end{equation}}
\newcommand{\bea}{\begin{equation*}}
\newcommand{\eea}{\end{equation*}}
\newcommand{\vertiii}[1]{{\left\vert\kern-0.25ex\left\vert\kern-0.25ex\left\vert #1 
		\right\vert\kern-0.25ex\right\vert\kern-0.25ex\right\vert}}
\newcommand{\vertiiii}[1]{{\left\vert\kern-0.25ex\left\vert\kern-0.25ex\left\vert #1 
		\right\vert\kern-0.25ex\right\vert\kern-0.25ex\right\vert}}
\def\R{{{\mathbb R}}}
\newcommand{\N}{\mathbb{N}}
\def\be{\begin{equation}}
\def\ee{\end{equation}}
\def\bea{\begin{eqnarray*}}
\def\bean{\begin{eqnarray}}
\def\eean{\end{eqnarray}}
\def\eea{\end{eqnarray*}}
\begin{document}
\pdfgraphics 

\title{Existence and Uniqueness of the 
Motion by Curvature of Regular Networks}

\author{Michael G\"{o}{\ss}wein
\footnote{Fakult\"at f\"ur Mathematik, Universit\"at Duisburg--Essen,
Thea-Leymann-Stra{\ss}e 9,
45127 Essen, Germany}
\and Julia Menzel  \footnote{Fakult\"at f\"ur Mathematik, Universit\"at Regensburg,
Universit\"atsstrasse 31, 93053 Regensburg, Germany}
\and Alessandra Pluda
\footnote{Dipartimento di Matematica, Universit\`a di Pisa, Largo
    Bruno Pontecorvo 5, 56127 Pisa, Italy}}
    
\date{}

\maketitle

\begin{abstract}
\noindent We prove existence and uniqueness of the motion by curvature
of networks with triple junctions in $\mathbb{R}^n$ when the initial datum is of class
$W^{2-\nicefrac{2}{p}}_p$ and the unit tangent
vectors to the concurring curves form angles of $120$ degrees.
Moreover we investigate the regularisation 
effect due to the parabolic nature of the system.
An application of this wellposedness result is a new proof
of~\cite[Theorem~3.18]{mannovtor} where the possible behaviours
of the solutions at the maximal time of existence are described. \\
Our study is motivated by an open question proposed in~\cite{mannovplusch}:
does there exist a unique solution of the motion by curvature of networks
with initial datum being a regular network of class $C^2$? We give a positive answer.
\end{abstract}

\textbf{MSC (2010)}: 53C44, 35K51  (primary); 
35K59,  35D35  (secondary).

\textbf{Keywords}: Networks, motion by curvature, local existence and uniqueness, parabolic regularisation, nonlinear boundary conditions.


\section{Introduction}

The mean curvature flow of surfaces in $\mathbb{R}^n$,
and in Riemannian manifolds in general, 
is one of the most significant examples of 
geometric evolution equations.
This evolution can be understood
as the gradient flow of the area functional:
a time--dependent surface evolves with normal velocity equal to its mean curvature
at any point and time.

From the 80s  
the curve shortening flow (mean curvature flow of 
one--dimensional objects)
was widely studied by many authors
both for closed curves~\cite{gage0,gage,gaha1,gray1}
and for curves with fixed end--points~\cite{huisk2,stahl1,stahl2}.
Also two concurring curves forming an angle or a cusp
can be regarded as a single curve with a singular point
which will vanish immediately under the flow~\cite{angen3,angen1,angen2}.
When more than two curves meet at a junction, the description 
of the motion cannot be reduced to the case of a single curve
and the problem presents new interesting features.
The simplest example of motion by mean curvature of a set which is essentially singular 
is indeed the motion by curvature of \emph{networks} that are
finite unions of curves that meet at junctions. 

Although after the seminal work by Brakke~\cite{brakke}
several weak definitions of the motion by curvature 
of singular surfaces have been proposed, 
the first attempt to 
find strong solutions to the network flow was
by Bronsard and Reitich~\cite{Bronsardreitich} providing a well posedness result 
for initial data of class $C^{2+\alpha}$.
The analysis of the long time behavior of 
the evolving networks was undertaken 
in~\cite{mannovtor}, completed in~\cite{MMN13} for trees composed of three curves 
and extended to more general cases in~\cite{IlmNevSch,mannovplusch,pluda}. 

In this paper we restrict to \emph{regular} networks
that possess only triple junctions where the unit tangent vectors
of the concurring curves form angles of $120$ degrees.
The motion by curvature of networks can be expressed as a boundary value problem
where the evolution of each curve is described by a second order quasilinear PDE
as given in Definition~\ref{geosolution}.

Our main result is the following.

\begin{thm}[Existence, uniqueness and smoothness of the motion by curvature]\label{existenceuniqueness}
Let $p\in (3,\infty)$ and $\mathcal{N}_0$ be a regular network in $\mathbb{R}^n$
of class $W^{2-\nicefrac{2}{p}}_p$. Then there exists a maximal solution 
$\left(\mathcal{N}(t)\right)_{t\in [0, T_{\max})}$ 
to the motion by curvature with initial datum $\mathcal{N}_0$ in the maximal time interval $[0,T_{\max})$ which is geometrically unique and locally of regularity
$$
\boldsymbol{E}_T:=W_p^1\left((a_n,b_n);L_p((0,1);(\mathbb{R}^n)^3)\right)\cap L_p\left((0,T);
W_p^2\left((a_n,b_n);(\mathbb{R}^n)^3\right)\right)
$$ 
for intervals $[a_n,b_n]\subset[0,T_{max})$, $n\in\mathbb{N}$, with $\bigcup_{n\in\mathbb{N}}[a_n,b_n]=[0,T_{max})$. Furthermore, the parametrisation $\gamma:[0,T_{max})\times[0,1]\to(\mathbb{R}^n)^3$ that parametrises the curves of the networks with constant speed equal to their length is smooth for positive times.
\end{thm}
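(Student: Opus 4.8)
The plan is to follow the classical Bronsard–Reitich strategy adapted to the $W^{2-\nicefrac 2p}_p$ setting, which is the natural trace space for the parabolic regularity class $\boldsymbol{E}_T$. First I would fix a parametrisation of the initial network $\mathcal N_0$ and write the flow, curve by curve, as a quasilinear parabolic system for the displacement parametrisations $\gamma^i(t,\cdot):[0,1]\to\mathbb R^n$, with the second order operator being the non-degenerate part $\tfrac{1}{|\partial_x\gamma^i|^2}\partial_x^2\gamma^i$ coming from the curvature and the lower order nonlinearity collecting the tangential corrections; the boundary conditions split into the geometric concurrency/angle conditions at the triple junctions (the $120^\circ$ condition, which is nonlinear) and, importantly, a second set of ``compatibility'' or ``non-degeneracy'' boundary conditions that must be imposed to make the problem well posed (such as requiring the sum of the curvature vectors, or equivalently a combination of second derivatives, to vanish at the junctions). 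The key preliminary step is to verify that this boundary value problem satisfies the Lopatinskii–Shapiro (complementing) condition, so that the linearisation around the initial datum generates the expected maximal $L_p$-regularity on $\boldsymbol E_T$; this is where the precise choice of boundary conditions and the $120^\circ$ angle enter decisively.

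Second, I would set up a contraction mapping argument. Freezing coefficients at the initial configuration gives a linear problem
\[
\partial_t\gamma^i - \frac{1}{|\partial_x\gamma_0^i|^2}\partial_x^2\gamma^i = f^i,
\]
with the linearised boundary operators, whose solvability and a priori estimate in $\boldsymbol E_T$ follow from the Lopatinskii–Shapiro verification together with a standard result on maximal parabolic regularity for systems (e.g.\ Denk–Hieber–Prüss / Solonnikov theory). The nonlinear terms — both the interior quasilinear term $\bigl(\tfrac{1}{|\partial_x\gamma^i|^2}-\tfrac{1}{|\partial_x\gamma_0^i|^2}\bigr)\partial_x^2\gamma^i$ plus tangential lower order terms, and the nonlinear junction conditions — must be shown to be Lipschitz on small balls of $\boldsymbol E_T$ with Lipschitz constant tending to $0$ as $T\to 0$; this uses the embeddings of $\boldsymbol E_T$ into $C([0,T];W^{2-\nicefrac 2p}_p)$ and $C([0,T]\times[0,1])$ (which require $p>3$, exactly the hypothesis) and the fact that the relevant trace/extension constants and time-integrals shrink with $T$, provided one first shifts by a fixed extension of the initial datum so that the ``new'' initial value is zero. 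Banach fixed point then yields a unique short-time solution in $\boldsymbol E_T$ for the chosen parametrisation.

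Third, I would upgrade this to the geometric statements. Geometric uniqueness: two solutions, possibly with different parametrisations, are compared by constructing a time-dependent diffeomorphism of $[0,1]$ (solving an ODE/transport equation driven by the tangential components) taking one parametrisation to the other, so that the geometric flows coincide; then one maximises the existence interval in the usual way, defining $T_{\max}$ as the supremum of times up to which a solution exists, and the open cover $\bigcup_n[a_n,b_n]=[0,T_{\max})$ records that on each compact subinterval one has $\boldsymbol E_T$-regularity. Smoothness for positive times: here I would use a parabolic bootstrap — difference quotients in time plus the maximal-regularity estimate show that $\partial_t\gamma$ again solves a problem of the same type with better data, iterating to get $\gamma\in C^\infty$ on $(0,T_{\max})\times[0,1]$; then composing with a time-dependent reparametrisation that equidistributes arclength (solving a scalar parabolic equation for the tangential reparametrisation, whose coefficients are already smooth) produces the constant-speed parametrisation claimed in the theorem, which remains smooth since the reparametrisation flow is.

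The main obstacle I expect is the verification of the Lopatinskii–Shapiro condition for the coupled junction conditions: one must check that the model half-line problems at each boundary point — the concurrency conditions tying together the three incoming curves, the linearised $120^\circ$ angle condition, and the chosen second order compatibility conditions — together admit only the trivial exponentially decaying solution for every relevant frequency. This is a finite-dimensional but somewhat delicate linear algebra computation, and it is the step on which the whole choice of functional-analytic framework (the space $\boldsymbol E_T$ and the boundary operators) hinges; a secondary technical point is ensuring the fixed-point map is a genuine self-map on a small ball, which forces careful tracking of how norms of the extension of the initial datum and of time-integrated remainders depend on $T$.
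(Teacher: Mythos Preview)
Your overall architecture matches the paper's: pass to the Special Flow by fixing a tangential velocity, linearise around the initial parametrisation, verify Lopatinskii--Shapiro, invoke Solonnikov-type maximal $L_p$-regularity, run a contraction with $T$-dependent constants, obtain geometric uniqueness by constructing reparametrisations (in the paper these solve a parabolic problem, not merely an ODE), and then bootstrap to smoothness. However, one point in your plan is a genuine misconception and would derail the argument.

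You assert that one must impose ``a second set of compatibility or non-degeneracy boundary conditions \ldots\ such as requiring the sum of the curvature vectors to vanish at the junctions''. This is wrong, and is in fact precisely the restriction the paper is designed to remove. Count conditions: three curves in $\mathbb R^n$ give $3n$ unknowns in a second-order system, so one needs $3n$ scalar conditions at each endpoint. At $x=1$ the Dirichlet conditions give $3n$; at the junction concurrency gives $2n$ and the angle condition gives $n$, totalling $3n$. The Lopatinskii--Shapiro verification (done in the paper by testing the ODE against $|\sigma^i_x(0)|\,\boldsymbol P^i\overline{\varrho}^i$ and then against the tangential part) closes with exactly these conditions and no more. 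The constraint $\sum_i\boldsymbol\kappa^i=0$ at the junction is \emph{not} a boundary condition but a compatibility requirement on the initial datum which is forced in the $C^{2+\alpha}$ framework of Bronsard--Reitich; the whole point of working in $W^{2-\nicefrac 2p}_p$ with $p\in(3,\infty)$ is that second spatial derivatives of the initial datum have no pointwise trace, so this constraint never appears. Imposing an additional second-order boundary condition would overdetermine the system.

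A smaller remark on smoothness: the paper does not use difference quotients but Angenent's parameter trick (time rescaling plus the implicit function theorem), and emphasises that the fully nonlinear angle condition $\sum_i\gamma^i_x/|\gamma^i_x|=0$ takes the problem outside the standard quasilinear smoothing frameworks; an auxiliary parametrisation of the nonlinear boundary manifold over its linearisation is needed before the trick can be applied. Your difference-quotient bootstrap would face the same obstruction --- the differentiated angle condition is a linear boundary operator with coefficients depending on the solution itself --- and you should expect to do comparable extra work there rather than treat it as routine.
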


As several solutions to the motion by curvature 
can be obtained by parametrising the same set with different maps, 
the uniqueness has to be understood in a purely geometric sense namely,
up to reparametrisations.

Local existence and uniqueness was proved in~\cite{Bronsardreitich}
for admissible initial networks of class
$C^{2+\alpha}$ with the sum of the curvature at the junctions equal to zero.
When the initial datum is a 
regular network of class $C^2$ without any restriction on the curvature at the junctions,
existence (but not uniqueness) has been established 
in~\cite[Theorem 6.8]{mannovplusch}. 
Theorem~\ref{existenceuniqueness} improves the uniqueness result by Bronsard and Reitich 
passing from initial data in $C^{2+\alpha}$ to
$W^{2-\nicefrac{2}{p}}_p$
which gives \emph{a fortiori} uniqueness for regular networks of class $C^2$
and even of class $H^2$ (take any $p\in (3,6]$).


We discuss now in more details Theorem~\ref{existenceuniqueness}.
The motion by curvature of networks is described by a 
parabolic system of degenerate PDEs 
where only the normal movements of the curves are prescribed.
We specify a suitable tangential component of the velocity
to turn the problem into a system of non--degenerate
second order quasilinear PDEs, the so--called Special Flow
(Definition~\ref{analyticproblem}).
Then we linearise the Special Flow around the initial datum and
prove existence and uniqueness for the linearised problem in Section~\ref{linearisedcase}.
Wellposedness of the linear system follows by Solonnikov's theory~\cite{solonnikov2} 
provided that the system is parabolic and 
that the complementary conditions hold.
Both properties were already shown in~\cite{Bronsardreitich},
nevertheless we present a new and shorter proof of the complementary conditions.  
Solutions to the Special Flow are obtained 
by a contraction argument in Section~\ref{sectionspecialflow}.
The solution to the Special Flow induces a solution to the motion by curvature 
of networks. To conclude the uniqueness result it is then
enough to prove that two different solutions differ only by a reparametrisation
but they are actually the same set as shown in Section~\ref{realexistence}.
Existence and uniqueness of maximal solutions can then be deduced with standard arguments.
Given $p\in (3,\infty)$ our solution space $\boldsymbol{E}_T$
embeds into 
$C\left([0,T];C^{1+\alpha}\left([0,1];(\mathbb{R}^n)^3\right)\right)$.
This choice allows us to define the boundary conditions pointwise 
and to use the theory of~\cite{solonnikov2} for the associated linear system.
Moreover the above regularity is needed in the contraction estimates
because of the quasilinear nature of the equations.

\smallskip

Because of the parabolicity of the problem 
it is natural to ask  whether the regularity of the evolving
network increases during the flow.
We give a positive answer to this question in
Section~\ref{parabolicsmoothingresult}
proving that the flow is smooth for all positive times (see Theorem~\ref{smoothnessthm}).
The idea of the proof is based on the so called parameter trick which is due to Angenent~\cite{angen3}. 
Although this strategy has been generalized to 
several situations~\cite{lunardi1,lunardi2,Prusssimonett}, 
it should be pointed out that our system is not among the cases treated above
because of the fully non-linear boundary condition
\begin{equation*}
\sum_{i=1}^{3} \frac{\gamma^i_x}{\vert\gamma^i_x\vert}=0\,.
\end{equation*}
In~\cite{michi} a strategy
has been developed to prove smoothness for positive times of
the surface diffusion flow for triple junction clusters with the same non--linear
boundary condition.
We follow this approach and modify the arguments to our setting to complete the proof of Theorem~\ref{existenceuniqueness}.

\smallskip

Finally a description of the possible different
behaviours of the solutions as time tends to the maximal 
time of existence is desirable.
Thanks to Theorem~\ref{existenceuniqueness} and the quantification of the existence time of solutions to the Special Flow in terms of the initial values as given in Theorem~\ref{short time existence}
we are also able to prove the following:

\begin{thm}[Long time behaviour]\label{longtimebehavior}
Let $p\in (3,6]$, $\mathcal{N}_0$ an admissible network of class $W_p^{2-\nicefrac{2}{p}}$
and $\left(\mathcal{N}(t)\right)_{t\in [0, T_{\max})}$
be a maximal solution to
the motion by curvature with initial datum 
$\mathcal{N}_0$ in $[0,T_{\max})$ where $T_{\max}\in(0,\infty)\cup\{\infty\}$.
Then 
$$
T_{\max}=\infty
$$ 
or as $t\nearrow T_{\max}$ at least one of the following happens:
\begin{itemize}
\item[$i)$] the inferior limit of the length of at least one curve of the network
$\mathcal{N}(t)$ is zero;
\item [$ii)$] the superior limit of the $L^2$--norm of the
curvature of the network is $+\infty$.
\end{itemize}
\end{thm}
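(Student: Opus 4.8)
The plan is to argue by contradiction: assume $T_{\max}<\infty$ and that \emph{both} alternatives $i)$ and $ii)$ fail, i.e.\ there exist $\ell_0>0$ and $C_0<\infty$ such that the length of every curve of $\mathcal{N}(t)$ stays $\ge\ell_0$ and the $L^2$--norm of the curvature stays $\le C_0$ for all $t\in[0,T_{\max})$. From these two bounds I want to extract enough control on the network near $T_{\max}$ to construct, for $t$ close to $T_{\max}$, a new initial datum of class $W^{2-\nicefrac 2p}_p$ with uniformly controlled geometry, apply the short--time existence statement (Theorem~\ref{short time existence}) to obtain a solution on a time interval of length bounded below independently of the starting time, and glue it to the given flow past $T_{\max}$ by the geometric uniqueness from Theorem~\ref{existenceuniqueness}. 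This contradicts maximality.

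The first step is to upgrade the two assumed bounds to a bound on $\mathcal{N}(t)$ in $W^{2-\nicefrac 2p}_p$ uniformly in $t$. With $p\in(3,6]$, the Sobolev space $H^2=W^2_2$ embeds into $W^{2-\nicefrac 2p}_p$, so it suffices to bound the networks in $H^2$. For a regular network parametrised by arclength, $\|\gamma\|_{H^2}$ is controlled by the lengths of the curves, the $L^2$--norm of the curvature, and the position of (say) one junction. The length lower bound $\ell_0$, together with the fact that under the flow the total length is non--increasing (the flow is the gradient flow of length), bounds all lengths from above and below; a standard computation shows the time derivative of $\int|k|^2$ is controlled by $\int|k|^2$ plus boundary terms that are themselves controlled once the $120^\circ$ condition and the length bounds are in force, so in fact the $L^2$--curvature bound $C_0$ can be propagated and the network stays bounded and bounded away from degeneracy in $\mathbb{R}^n$. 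Hence $\sup_{t<T_{\max}}\|\mathcal{N}(t)\|_{W^{2-\nicefrac 2p}_p}<\infty$, and moreover along a sequence $t_j\nearrow T_{\max}$ one obtains a limiting regular network $\mathcal{N}(T_{\max})$ of class $W^{2-\nicefrac 2p}_p$ (weak compactness plus the compact embedding into $C^{1+\alpha}$ to pass the $120^\circ$ condition and non--degeneracy to the limit).

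The second step invokes Theorem~\ref{short time existence}: its quantification of the existence time in terms of the initial values gives a $\delta>0$, depending only on the uniform $W^{2-\nicefrac 2p}_p$--bound and the geometric non--degeneracy constants established above, such that for every $t_j$ the motion by curvature started at $\mathcal{N}(t_j)$ exists on $[t_j,t_j+\delta]$. Choosing $t_j>T_{\max}-\delta$, geometric uniqueness (Theorem~\ref{existenceuniqueness}) forces this new solution to coincide, as a family of sets, with $\mathcal{N}(\cdot)$ on $[t_j,T_{\max})$, hence it extends the maximal solution strictly beyond $T_{\max}$, a contradiction. Therefore $T_{\max}=\infty$ or one of $i)$, $ii)$ holds, which is the claim.

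The main obstacle is the curvature propagation in the first step: one must show that the bound $\int_{\mathcal{N}(t)}|k|^2\le C_0$ (the \emph{negation} of $ii)$) together with the length bound (the negation of $i)$) is genuinely enough to bound $\mathcal{N}(t)$ in $W^{2-\nicefrac2p}_p$ uniformly, and in particular to control the problematic boundary terms arising at the triple junctions in the evolution equation for $\int|k|^2$ — this is exactly where the structure of the $120^\circ$ condition and the non--concentration of length must be used carefully, and it is the analogue of the analysis carried out in~\cite{mannovtor}. Once the uniform $W^{2-\nicefrac 2p}_p$--bound and non--degeneracy are in hand, the continuation argument is routine given Theorems~\ref{existenceuniqueness} and~\ref{short time existence}.
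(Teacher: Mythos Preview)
Your overall strategy---argue by contradiction, obtain a uniform $W^{2-\nicefrac{2}{p}}_p$--bound near $T_{\max}$, restart the flow via Theorem~\ref{short time existence}, and contradict maximality---is the same as the paper's. However, you have misidentified the ``main obstacle'' and introduced two unnecessary detours that the paper deliberately avoids.

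First, there is no curvature propagation to carry out. The negation of $ii)$ is precisely the assumption that $\sup_{t<T_{\max}}\lVert\boldsymbol{\kappa}\rVert_{L^2(\mathcal{N}(t))}\leq C_0$; this bound is given, not something to be derived. Your paragraph about computing $\frac{d}{dt}\int\lvert k\rvert^2$ and controlling junction boundary terms is exactly the energy--estimate machinery of~\cite{mannovtor} that this paper's proof is designed to bypass. The paper's stated benefit is that ``energy estimates are completely avoided,'' so inserting them here misses the point of the argument.

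Second, the extraction of a limiting network $\mathcal{N}(T_{\max})$ via weak compactness is superfluous. The paper never passes to a limit at $T_{\max}$; it simply picks a time $T_{\max}-\delta$ with $\delta$ small and restarts from there.

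What actually makes the argument work is the specific parametrisation supplied by Theorem~\ref{existenceuniqueness}: the constant--speed parametrisation $\gamma$ with $\lvert\gamma^i_x(t,x)\rvert=L(\mathbb{T}^i(t))$, which is smooth for positive times. With this choice one has $\gamma^i_{xx}=L(\mathbb{T}^i)^2\,\boldsymbol{\kappa}^i$, so the assumed $L^2$--curvature bound and the two--sided length bound (lower bound from the negation of $i)$, upper bound from monotonicity of total length) give $\gamma\in L^\infty\bigl((\varepsilon,T_{\max});W^2_2((0,1);(\mathbb{R}^n)^3)\bigr)$ directly, with no differential inequality needed. The embedding $W^2_2\hookrightarrow W^{2-\nicefrac{2}{p}}_p$ for $p\in(3,6]$ (the only place the restriction on $p$ enters) then yields the uniform bound required to invoke Theorem~\ref{short time existence}. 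From there the extension and contradiction are as you describe.
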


This result was first shown for planar networks in~\cite[Theorem~3.18]{mannovtor}.
The benefit of our proof is that energy estimates are completely avoided.

\medskip

We describe here the structure of the paper.
In Section~\ref{preliminaries} we define the motion by curvature of networks and
we introduce the solution space together with useful properties.
Section~\ref{existence and uniquess} is devoted to prove existence of solutions to the motion by curvature and their geometric uniqueness. Then in Section~\ref{parabolicsmoothingresult}
we explore the regularisation effect of the flow resulting in the proof of Theorem~\ref{existenceuniqueness}.
We conclude with the proof of Theorem~\ref{longtimebehavior} in Section~\ref{longtime} giving a description of the behaviour of solutions at their maximal time of existence.

\medskip

\medskip

\medskip

\textbf{Acknowledgements}

The authors gratefully acknowledge the support by the Deutsche Forschungsgemeinschaft (DFG) via the GRK 1692 ``Curvature, Cycles, and Cohomology''. 

\newpage

\section{Solutions to the Motion by Curvature of Networks}\label{preliminaries}

\subsection{Preliminaries on function spaces}

This paper is devoted to show well-posedness of a second order evolution equation. One natural solution space is given by
\begin{equation*}
W_p^{1,2}\left((0,T)\times(0,1);\mathbb{R}^d\right):=W_p^1\left((0,T);L_p((0,1);\mathbb{R}^d)\right)\cap L_p\left((0,T);W_p^2\left((0,1);\mathbb{R}^d\right)\right)
\end{equation*}
where $T$ is positive representing the time of existence and $d\in\mathbb{N}$ is any natural number.
This space should be understood as the intersection of two \textit{Bochner spaces} that are Sobolev spaces defined on a measure space with values in a Banach space. We give a brief summary in the case that the measure space is an interval. A detailed introduction on Bochner spaces can be found in~\cite{yosida}.
\begin{defn}
	Let $I\subset\mathbb{R}$ be an open interval and $X$ be a Banach space. A function $f:I\to X$ is called \textit{strongly measurable} if there exists a family of simple functions $f_n:I\to X$, $n\in\mathbb{N}$, such that for almost every $x\in I$,
	\begin{equation*}
	\lim\limits_{n\to\infty}\left\lVert f_n(x)-f(x)\right\rVert =0\,.
	\end{equation*}
	Here, a function $g:I\to X$ is called \textit{simple} if 
	\begin{equation*}
	g= \sum_{k=1}^N a_k\chi_{(b_k,c_k)}
	\end{equation*}
	for $N\in\mathbb{N}$, $a_k\in X$, $b_k, c_k\in I$ and $b_k<c_k$ for $k\in\{1,\dots,N\}$.
\end{defn}
If $f:I\to X$ is strongly measurable, then $\left\lVert f\right\rVert_X:I\to\mathbb{R}$ is Lebesgue measurable. This justifies the following definition.
\begin{defn}[$L_p$--spaces]
	Let $I\subset\mathbb{R}$ be an open interval and $X$ be a Banach space. For $1\leq p\leq \infty$, we define the \textit{$L_p$--space}
	\begin{equation*}
	L_p\left(I;X\right):=\left\{f:I\to X \text{ strongly measurable }:\left\lVert f\right\rVert_{L_p(I;X)}<\infty\right\}\,,
	\end{equation*}
	where $\left\lVert f\right\rVert_{ L_p\left(I;X\right)}:=\left\lVert \left\lVert f(\cdot)\right\rVert_X\right\rVert_{L_p\left(I;\mathbb{R}\right)}$. Furthermore, we let
	\begin{equation*}
	L_{1,loc}\left(I;X\right):=\left\{f:I\to X\text{ strongly measurable}:\text{ for all }K\subset I \text{ compact, } f_{|K}\in L_1\left(K;X\right)\right\}\,.
	\end{equation*}
\end{defn}
\begin{defn}\label{distributional derivative}
	Let $I\subset\mathbb{R}$ be an open interval, $X$ be a Banach space, $f\in L_{1,loc}(I;X)$ and $k\in\mathbb{N}_0$. The $k$-th distributional derivative $\partial_x^k f$ of $f$ is the functional on $C_0^\infty(I;\mathbb{R})$ given by
	\begin{equation*}
	\langle \phi\,,\,\partial_x^k f\rangle:=(-1)^{\lvert\alpha\rvert}\int_{I}^{}f(x)\partial_x^k\phi(x)\mathrm{d}x\,.
	\end{equation*}
	The distribution $\partial^k_x f$ is called regular if there exists $v\in L_{1,loc}(I;X)$ such that
	\begin{equation*}
	\langle \phi\,,\,\partial^k_x f\rangle:=\int_{I}^{}v(x)\phi(x)\mathrm{d}x\,.
	\end{equation*}
	In this case we write $\partial^k_x f= v\in L_{1,loc}(I;X)$.
\end{defn}
\begin{defn}[Sobolev spaces]\label{Sobolev spaces}
	Let $m\in\mathbb{N}$, $I\subset\mathbb{R}$ be an open interval and $X$ be a Banach space. For $1\leq p\leq\infty$ the \textit{Sobolev space of order $m\in\mathbb{N}$} is defined as 
	\begin{equation*}
	W_p^m(I;X):=\left\{f\in L_p(I;X):\partial^k_x f\in L_p(I;X)\text{ for all }1\leq k\leq m\right\}\,,
	\end{equation*}
	where $\partial^k_x f$ is the distributional derivative defined in Definition~\ref{distributional derivative}.
\end{defn}
It is well-known that the space $W^m_p\left(I;X\right)$ is a Banach space in the norm
	\begin{equation}
	\lVert f\rVert_{W^m_p(I;X)}:=\left\{\begin{array}{cl} \left(\sum_{0\leq k\leq m}^{}\lVert\partial^k_x f\rVert_{L_p(I;X)}^p\right)^{\nicefrac{1}{p}}\,, & 1\leq p<\infty\,,\\ \max_{0\leq k\leq m}\lVert\partial^k_x f\rVert_{L_\infty(I;X)}\,, & p=\infty\,. \end{array}\right.\label{Sobolev Norm}
	\end{equation}

Elements in the solution space 
\begin{equation*}
\boldsymbol{E}_T:=W_p^1\left((0,T);L_p((0,1);(\mathbb{R}^n)^3)\right)\cap L_p\left((0,T);W_p^2\left((0,1);(\mathbb{R}^n)^3\right)\right)
\end{equation*}
are thus functions $f\in L_p\left((0,T); L_p\left((0,1);(\mathbb{R}^n)^3)\right)\right)$ possessing one distributional derivative with respect to time $\partial_t f\in L_p\left((0,T);L_p\left((0,1);(\mathbb{R}^n)^3)\right)\right)$ in the sense of Definition~\ref{distributional derivative}. Furthermore, for almost every $t\in(0,T)$, the function $f(t)$ lies in $W_p^2\left((0,1);(\mathbb{R}^n)^3)\right)$ and thus has two spacial derivatives $\partial_x (f(t))$, $\partial_x ^2\left(f(t)\right)\in L_p\left((0,1);(\mathbb{R}^n)^3)\right)$. One easily sees that the functions $t\mapsto \partial_x^k(f(t))$ for $k\in\{1,2\}$ lie in $L_p\left((0,T);L_p\left((0,1);(\mathbb{R}^n)^3)\right)\right)$. The space $\boldsymbol{E}_T$ is often denoted by $W_p^{1,2}\left((0,T)\times(0,1);(\mathbb{R}^n)^3)\right)$. We also use the notation $\left\lVert\cdot\right\rVert_{W_p^{1,2}}:=\left\lVert\cdot\right\rVert_{\boldsymbol{E}_T}$ where $\left\lVert\cdot\right\rVert_{\boldsymbol{E}_T}$ is the corresponding norm on $\boldsymbol{E}_T$ as defined in~\eqref{Sobolev Norm}.

\begin{defn}[Sobolev--Slobodeckij spaces]
	Given $d\in\mathbb{N}$, $p\in [1,\infty)$ and $\theta\in(0,1)$ the \textit{Slobodeckij semi-norm} of an element $f\in L_p\left((0,1);\mathbb{R}^d\right)$ is defined as
	\begin{equation*}
	\left[f\right]_{\theta,p}:=\left(\int_{0}^{1}\int_{0}^{1}\frac{\left\lvert f(x)-f(y)\right\rvert^p}{\lvert x-y\rvert^{\theta p+1}}\,\mathrm{d}x\,\mathrm{d}y\right)^{\nicefrac{1}{p}}\,.
	\end{equation*}
	Let $s\in(0,\infty)$ be non--integer. The \textit{Sobolev--Slobodeckij space} $W_p^s\left((0,1);\mathbb{R}^d\right)$ is defined by
	\begin{equation*}
	W_p^s\left((0,1);\mathbb{R}^d\right):=\left\{f\in W_p^{\lfloor s\rfloor}\left((0,1);\mathbb{R}^d\right):\left[\partial_x^{\lfloor s\rfloor}f\right]_{s-\lfloor s\rfloor,p}<\infty\right\}\,.
	\end{equation*}
\end{defn}
The following result characterises the regularity of the initial values.
\begin{thm}\label{embeddingBUC}
	Let $T$ be positive, $p\in(3,\infty)$ and $\alpha\in\left(0,1-\nicefrac{3}{p}\right]$. We have continuous embeddings
	\begin{equation*}
	W_p^{1,2}\left((0,T)\times(0,1)\right)\hookrightarrow C\left([0,T];W_p^{2-\nicefrac{2}{p}}\left((0,1)\right)\right)\hookrightarrow C\left([0,T];C^{1+\alpha}\left([0,1]\right)\right)\,.
	\end{equation*}
\end{thm}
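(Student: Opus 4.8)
The statement is the classical trace/embedding result for parabolic anisotropic Sobolev spaces, specialized to one space dimension, so the plan is to invoke the general theory of Sobolev spaces of mixed smoothness and then perform the routine exponent bookkeeping. First I would recall that $W_p^{1,2}((0,T)\times(0,1))$ coincides with the anisotropic Bessel-potential/Besov-type space $H_p^{(1,2)}$ with smoothness $2$ in space and $1$ in time; the time trace $t\mapsto f(t,\cdot)$ at a fixed time is then controlled by real interpolation. Concretely, by the mixed-derivative theorem one has $W_p^{1,2}((0,T)\times(0,1))\hookrightarrow W_p^{1-\theta}((0,T);W_p^{2\theta}((0,1)))$ for every $\theta\in[0,1]$; taking $\theta$ so that $1-\theta>\nicefrac1p$, i.e.\ $\theta<1-\nicefrac1p$, the time-Sobolev space embeds into $C([0,T];W_p^{2\theta}((0,1)))$, and optimizing $\theta\uparrow 1-\nicefrac1p$ gives the borderline trace space $W_p^{2-\nicefrac2p}((0,1))$. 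The correct statement is that the trace operator maps $\boldsymbol{E}_T$ \emph{onto} $W_p^{2-\nicefrac2p}((0,1))$, continuously, and this is exactly the content of, e.g., the trace theorems in Ladyzhenskaya--Solonnikov--Ural'tseva, or Amann's or Pr\"uss--Simonett's monographs; I would cite one of these for the first embedding rather than reprove it.

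For the second embedding, I would argue pointwise in time: for each fixed $t$ we need $W_p^{2-\nicefrac2p}((0,1))\hookrightarrow C^{1+\alpha}([0,1])$ as a one-dimensional Sobolev--Morrey embedding. Writing $s:=2-\nicefrac2p$, since $p>3$ we have $s>2-\nicefrac23=\tfrac43>1$, so $\lfloor s\rfloor=1$ and $f,\partial_x f\in W_p^{s-1}((0,1))$ with $s-1=1-\nicefrac2p\in(0,1)$. The Sobolev--Slobodeckij embedding $W_p^{\sigma}((0,1))\hookrightarrow C^{\sigma-\nicefrac1p}([0,1])$ for $\sigma p>1$, applied with $\sigma=s-1=1-\nicefrac2p$, yields $\partial_x f\in C^{\beta}([0,1])$ with $\beta=(1-\nicefrac2p)-\nicefrac1p=1-\nicefrac3p$; hence $f\in C^{1+\beta}([0,1])$ and a fortiori $f\in C^{1+\alpha}([0,1])$ for every $\alpha\le 1-\nicefrac3p=\beta$. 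Finally, to upgrade this fixed-time embedding to an embedding of $C([0,T];W_p^{2-\nicefrac2p})$ into $C([0,T];C^{1+\alpha})$, one only needs that the embedding constant is uniform in $t$ (it is, being independent of $t$) and that continuity in $t$ with values in the smaller space follows from continuity in the larger one together with the uniform bound—this is immediate since a continuous curve in $W_p^{2-\nicefrac2p}$ composed with a bounded linear embedding into $C^{1+\alpha}$ stays continuous.

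The one genuinely non-trivial input is the first embedding, namely that the temporal trace of $\boldsymbol{E}_T$ lands in precisely $W_p^{2-\nicefrac2p}((0,1))$ and not merely in some space of lower order; the half-power loss $\nicefrac2p$ in the smoothness exponent encodes the parabolic scaling (two time derivatives count as one space derivative) and the $\nicefrac1p$ needed to make the time trace continuous. I expect the main obstacle, if one wanted to be fully self-contained, to be reproving this anisotropic trace theorem; but since it is standard and is anyway used in the same form in Solonnikov's theory invoked later in the paper, I would simply reference it. Everything else is the elementary exponent arithmetic above, using $p>3$ to guarantee $2-\nicefrac2p>1$ so that one full classical derivative plus a H\"older-continuous derivative is available.
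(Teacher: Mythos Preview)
Your proposal is correct and follows essentially the same approach as the paper: the paper also dispatches the first embedding by citing a standard parabolic trace theorem (specifically \cite[Lemma 4.4]{DenkSaalSeiler}) and the second by invoking the Sobolev Embedding Theorem (specifically \cite[Theorem 4.6.1.(e)]{Triebel}). Your additional heuristic via the mixed-derivative theorem and the explicit exponent bookkeeping $2-\nicefrac{2}{p}-1-\nicefrac{1}{p}=1-\nicefrac{3}{p}$ are correct and make the argument more self-contained, but the underlying strategy is identical.
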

\begin{proof}
	The first embedding follows from~\cite[Lemma 4.4]{DenkSaalSeiler}, the second is an immediate consequence of the Sobolev Embedding Theorem~\cite[Theorem 4.6.1.(e)]{Triebel}.
\end{proof}

Similarly, we can specify the spaces of the boundary values.
\begin{lem}\label{derivativesol}
	Let $T$ be positive, $d\in\mathbb{N}$ and $p\in [1,\infty)$. Then the operator
	\begin{align*}
	W_p^{1,2}\left((0,T)\times(0,1);\mathbb{R}\right)&\to W_p^{\nicefrac{1}{2}-\nicefrac{1}{2p}}\left((0,T);L_p\left(\{0\};\mathbb{R}\right)\right)\cap L_p\left((0,T);W_p^{1-\nicefrac{1}{p}}\left(\{0\};\mathbb{R}\right)\right)\,, \\
	f&\mapsto \left(f_{x}\right)_{|x=0}
	\end{align*}
	is linear and continuous.
\end{lem}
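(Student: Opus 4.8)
The statement is a trace theorem for the anisotropic space $W_p^{1,2}((0,T)\times(0,1))$ (with values in $\mathbb{R}$, hence the componentwise extension to $\mathbb{R}^d$), applied to the spatial boundary $\{x=0\}$ after differentiating once in $x$. The plan is to reduce it to a standard trace result by passing through the product-space characterisation of the function spaces involved. Recall that $\boldsymbol{E}_T = W_p^1((0,T);L_p((0,1)))\cap L_p((0,T);W_p^2((0,1)))$ is a so-called anisotropic Sobolev space, and by the (well-known) mixed-derivative / intersection characterisation one has, for $\theta\in[0,1]$,
\[
W_p^{1,2}\bigl((0,T)\times(0,1)\bigr)\hookrightarrow W_p^{\theta}\bigl((0,T);W_p^{2(1-\theta)}((0,1))\bigr),
\]
with the embedding constant depending on $T$. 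Taking $\theta=\tfrac12-\tfrac{1}{2p}$ gives continuous inclusion into $W_p^{1/2-1/(2p)}((0,T);W_p^{1+1/p}((0,1)))$, and taking $\theta=0$ trivially keeps us in $L_p((0,T);W_p^2((0,1)))$.

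First I would fix a single scalar component $f\in W_p^{1,2}((0,T)\times(0,1);\mathbb{R})$; the vector case follows by applying the scalar result to each of the $d$ coordinates, since all the spaces in sight are defined componentwise with equivalent norms. Next I apply the classical \emph{spatial} trace theorem for one-dimensional Sobolev spaces with Banach-space values: for $p>1$ the evaluation map $g\mapsto g(0)$ sends $W_p^2((0,1);Y)$ continuously onto $Y$ for any Banach space $Y$ (here with an intermediate gain, onto the relevant real-interpolation space), but more precisely I want the trace of the \emph{first derivative}, so I work with $\partial_x f$. Applying the scalar one-dimensional trace theorem in the $x$-variable pointwise in $t$ — together with the interpolation characterisation above — shows that $t\mapsto (\partial_x f(t))(0)$ is controlled in $W_p^{1-1/p}(\{0\})$ after integration in $t$ against the $L_p((0,T);W_p^2((0,1)))$ part, and in $W_p^{1/2-1/(2p)}((0,T))$ after using the $W_p^{1/2-1/(2p)}((0,T);W_p^{1+1/p}((0,1)))$ embedding together with the elementary fact that $W_p^{1+1/p}((0,1))\hookrightarrow C^1([0,1])$ (valid since $1/p<1$), so that the $t$-regularity survives the spatial evaluation. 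Combining the two estimates gives exactly membership in
\[
W_p^{1/2-1/(2p)}\bigl((0,T);L_p(\{0\})\bigr)\cap L_p\bigl((0,T);W_p^{1-1/p}(\{0\})\bigr),
\]
with norm bounded by $\|f\|_{W_p^{1,2}}$, and linearity is obvious.

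A cleaner route, which I would actually present, is to cite the general anisotropic trace theory directly: by \cite[Lemma 4.4]{DenkSaalSeiler} (already invoked in the proof of Theorem~\ref{embeddingBUC}) and the trace theorems in that framework, the spatial trace operator maps $W_p^{1,2}((0,T)\times(0,1))$ into $W_p^{3/4-?}$-type spaces on the boundary; one reads off that $f\mapsto f_{|x=0}$ lands in $W_p^{3/4}$-in-time $\cap\ L_p$-in-time-$W_p^{3/2}$-in-space, and then differentiating once in $x$ drops one spatial derivative, i.e. costs $1$ in the space scaling and $1/2$ in the time scaling (parabolic weight), yielding the stated target space. The main obstacle is bookkeeping: one must be careful that differentiation in $x$ and taking the trace at $x=0$ commute in the appropriate sense for $\boldsymbol{E}_T$-functions (this is fine because $\partial_x f\in W_p^1((0,T);L_p)\cap L_p((0,T);W_p^1)$ by definition of the space, so $\partial_x f$ itself has a well-defined spatial trace), and that the time-regularity exponent is exactly $\tfrac12-\tfrac1{2p}$, not $\tfrac34-\tfrac1{2p}$; the loss of $1/2$ relative to the full trace of $f$ is precisely the price of the extra $x$-derivative under the parabolic scaling $\partial_t\sim\partial_x^2$. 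Once this commutation and the exponent computation are checked, everything else is a direct application of the cited one-dimensional (Banach-valued) and anisotropic trace theorems together with the interpolation embedding displayed above.
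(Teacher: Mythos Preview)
The paper's own proof is a single citation to Solonnikov's anisotropic trace theorem \cite[Theorem~5.1]{solonnikov2}. Your ``cleaner route'' --- citing the general anisotropic trace theory directly --- is exactly this, so in substance your final proposal and the paper agree.

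Your exploratory sketch via the mixed-derivative embedding, however, has a genuine endpoint problem and would not work as written. With $\theta=\tfrac12-\tfrac{1}{2p}$ you obtain $\partial_x f\in W_p^{1/2-1/(2p)}\bigl((0,T);W_p^{1/p}((0,1))\bigr)$, and the spatial evaluation $W_p^{1/p}((0,1))\to L_p(\{0\})$ is exactly the critical case $s=1/p$, which fails in general. Equivalently, your claimed embedding $W_p^{1+1/p}((0,1))\hookrightarrow C^1([0,1])$ is false: the Sobolev exponent is critical, $1+\tfrac1p-\tfrac1p=1$, and the embedding does not hold. The reason Solonnikov's theorem still yields the sharp time exponent $\tfrac12-\tfrac{1}{2p}$ is that it treats the full anisotropic structure at once rather than first applying the mixed-derivative embedding and then the one-dimensional trace; these two operations do not compose sharply at the endpoint. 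If you wanted to avoid a black-box citation you would need either a direct argument with the parabolic Poisson kernel (as in Solonnikov) or a real-interpolation argument that keeps the anisotropy intact, not the two-step route you sketch first.
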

\begin{proof}
	This follows from~\cite[Theorem 5.1]{solonnikov2}.
\end{proof}
In this work we will use the following identification.
\begin{prop}\label{identification}
	Let $T$ be positive, $d\in\mathbb{N}$ and $p\in[1,\infty)$. There is an isometric isomorphism
		\begin{equation*}
	W_p^{\nicefrac{1}{2}-\nicefrac{1}{2p}}\left((0,T);L_p\left(\{0\};\mathbb{R}^d\right)\right)
	\cap L_p\left((0,T); W_p^{1-\nicefrac{1}{p}}\left(\{0\};\mathbb{R}^d\right)\right) 
	\simeq W_p^{\nicefrac{1}{2}-\nicefrac{1}{2p}}\left((0,T);\mathbb{R}^d\right)
	\end{equation*}
		via the map $f\mapsto \left(t\mapsto f(t,0)\right)$. 
\end{prop}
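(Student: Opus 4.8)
The plan is to establish the claimed isometric isomorphism by identifying the intersection space on the left with a space of traces at a single point, where the spatial regularity becomes vacuous. First I would observe that $\{0\}$ is a single point, so that $L_p(\{0\};\mathbb{R}^d)$ and $W_p^{1-\nicefrac{1}{p}}(\{0\};\mathbb{R}^d)$ are both canonically identified with $\mathbb{R}^d$ equipped with the Euclidean norm (there is no spatial integration or difference quotient to take over a single point). Under this identification, a function $f\colon(0,T)\to L_p(\{0\};\mathbb{R}^d)$ is literally the same object as a function $f\colon(0,T)\to\mathbb{R}^d$, and $\left\lVert f(t)\right\rVert_{L_p(\{0\};\mathbb{R}^d)}=\left\lVert f(t)\right\rVert_{W_p^{1-\nicefrac{1}{p}}(\{0\};\mathbb{R}^d)}=\left\lvert f(t)\right\rvert$ for each $t$. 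Hence the map $f\mapsto\left(t\mapsto f(t,0)\right)$ is just the tautological identification, and there is really nothing to construct; the content is in checking that the norm on the left-hand side collapses to the Sobolev--Slobodeckij norm on $(0,T)$.

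Next I would compute the two pieces of the intersection norm. For the second factor, $L_p\left((0,T);W_p^{1-\nicefrac{1}{p}}(\{0\};\mathbb{R}^d)\right)$, the inner norm is pointwise $\left\lvert f(t)\right\rvert$, so this factor is isometrically $L_p\left((0,T);\mathbb{R}^d\right)$. For the first factor, $W_p^{\nicefrac{1}{2}-\nicefrac{1}{2p}}\left((0,T);L_p(\{0\};\mathbb{R}^d)\right)$, I would use the intrinsic (Gagliardo/Slobodeckij) description of the fractional Bochner--Sobolev space: since $\theta:=\nicefrac{1}{2}-\nicefrac{1}{2p}\in(0,1)$, its norm is $\left\lVert f\right\rVert_{L_p((0,T);X)}^p+[f]_{\theta,p;X}^p$ with the seminorm built from $\left\lVert f(t)-f(s)\right\rVert_X$, and here $X=L_p(\{0\};\mathbb{R}^d)\cong\mathbb{R}^d$, so $\left\lVert f(t)-f(s)\right\rVert_X=\left\lvert f(t)-f(s)\right\rvert$ and this factor is isometrically $W_p^{\theta}\left((0,T);\mathbb{R}^d\right)$. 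Taking the intersection and using that $W_p^{\theta}\left((0,T);\mathbb{R}^d\right)\hookrightarrow L_p\left((0,T);\mathbb{R}^d\right)$ with the intersection norm $\left(\left\lVert\cdot\right\rVert_{W_p^{\theta}}^p+\left\lVert\cdot\right\rVert_{L_p}^p\right)^{\nicefrac1p}$, one sees this is an equivalent norm on $W_p^\theta\left((0,T);\mathbb{R}^d\right)$; for a genuinely \emph{isometric} statement I would note that the intersection norm is, after the collapse, $\left(\left\lVert f\right\rVert_{L_p}^p+[f]_{\theta,p}^p+\left\lVert f\right\rVert_{L_p}^p\right)^{\nicefrac1p}$, and one should either choose the normalisation of the $W_p^\theta$-norm on $(0,T)$ accordingly or remark that the isomorphism is isometric once both sides carry the natural intersection/graph norm — which is the convention implicitly used throughout the paper when forming such intersection spaces.

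The only mildly delicate point, and the one I would treat with a line of justification rather than take for granted, is the validity of the intrinsic Slobodeckij characterisation of $W_p^{\theta}\left((0,T);X\right)$ for a general Banach space $X$: that the real-interpolation / Sobolev--Slobodeckij definition agrees with the Gagliardo-seminorm definition in the vector-valued setting. This holds for any Banach space $X$ and $\theta\in(0,1)$, $p\in[1,\infty)$ (see e.g. the references on Bochner and interpolation spaces, or simply take the Gagliardo description as the definition of the fractional-order Bochner--Sobolev space used in~\cite{solonnikov2}), so no geometry of $X$ — and in particular no finite-dimensionality beyond what we already have — is needed. Once this is in place, the chain of isometric identifications
\begin{equation*}
W_p^{\theta}\!\left((0,T);L_p(\{0\};\mathbb{R}^d)\right)\cap L_p\!\left((0,T);W_p^{1-\nicefrac1p}(\{0\};\mathbb{R}^d)\right)\;\simeq\;W_p^{\theta}\!\left((0,T);\mathbb{R}^d\right)\cap L_p\!\left((0,T);\mathbb{R}^d\right)\;=\;W_p^{\theta}\!\left((0,T);\mathbb{R}^d\right)
\end{equation*}
with $\theta=\nicefrac12-\nicefrac1{2p}$ finishes the proof, the last equality being the inclusion $W_p^{\theta}\subset L_p$ on the bounded interval $(0,T)$. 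I expect the writing to be short; the main (and only) conceptual obstacle is phrasing the single-point trace identification cleanly and invoking the correct intrinsic description of the fractional Bochner space, after which everything is bookkeeping of norms.
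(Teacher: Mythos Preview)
Your approach is essentially the same as the paper's: both arguments rest on the observation that $L_p(\{0\};\mathbb{R}^d)$ and $W_p^{1-\nicefrac{1}{p}}(\{0\};\mathbb{R}^d)$ are isometrically identified with $\mathbb{R}^d$ (the paper phrases this via the counting measure on the $0$-dimensional manifold $\{0\}$), after which the map $f\mapsto(t\mapsto f(t,0))$ is tautological. Your treatment is in fact more thorough than the paper's very short proof, since you explicitly address the norm bookkeeping on the intersection and the intrinsic Slobodeckij description of the fractional Bochner space; the paper leaves these implicit.
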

\begin{proof}
	It is shown in~\cite[page~406]{Lee} that integration with respect to the volume element on the $0$--dimensional manifold $\{0\}$ is  given by integration with respect to the counting measure. That allows us to identify the space 
	\begin{equation*}
	L^p\left(\{0\};\mathbb{R}^d\right)
	:=\left\{f:\{0\}\to\mathbb{R}^d:\int_{\{0\}}^{}\left\lvert f\right\rvert^p\,\mathrm{d}\sigma
	=\left\lvert f(0)\right\rvert^p <\infty\right\}
	\end{equation*}
	with $\mathbb{R}^d$ via the isometric isomorphism 
	$I:L_p\left(\{0\};\mathbb{R}^d\right)\to\mathbb{R}^d$, $f\mapsto f(0)$. One easily sees that this operator restricts to $I:W_p^s\left(\{0\};\mathbb{R}^d\right)\to \mathbb{R}^d$ for every $s>0$.
\end{proof}

Another important feature of Sobolev Slobodeckij spaces is their Banach algebra property.

\begin{prop}\label{Banachalgebra}
	Let $I\subset\mathbb{R}$ be a bounded open interval, $p\in[1,\infty)$ and $s\in(0,1)$ with $s-\frac{1}{p}>0$. Then for $f,g\in W_p^s\left(I;\mathbb{R}\right)$ the product $fg$ lies in $W_p^s\left(I;\mathbb{R}\right)$ and satisfies
	\begin{equation*}
	\left\lVert fg\right\rVert_{W_p^s(I;\mathbb{R})}\leq C(s,p)\left(\left\lVert f\right\rVert_{C(\overline{I})}\left\lVert g\right\rVert_{W_p^s(I;\mathbb{R})}+\left\lVert g\right\rVert_{C(\overline{I})}\left\lVert f\right\rVert_{W_p^s(I;\mathbb{R})}\right)\,.
	\end{equation*}
	Furthermore, given a smooth function $F:\mathbb{R}^d\to\mathbb{R}$, $d\in\mathbb{N}$, and a function $f\in W_p^s\left(I;\mathbb{R}^d\right)$, the function $t\mapsto F(f(t))$ lies in $W_p^s\left(I;\mathbb{R}\right)$.
\end{prop}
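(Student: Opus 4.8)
\textbf{Proof proposal for Proposition~\ref{Banachalgebra}.}

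The plan is to prove the two assertions separately, starting from the characterisation of the Sobolev--Slobodeckij norm via the Slobodeckij semi-norm $[\cdot]_{s,p}$ together with the $L_p$-norm. For the product estimate, I would write $\|fg\|_{W_p^s}^p\sim\|fg\|_{L_p}^p+[fg]_{s,p}^p$ and treat the two pieces in turn. The $L_p$ part is immediate: $\|fg\|_{L_p(I)}\le\|f\|_{C(\overline I)}\|g\|_{L_p(I)}$, which is dominated by the right-hand side. For the semi-norm one uses the pointwise identity
\begin{equation*}
f(x)g(x)-f(y)g(y)=\bigl(f(x)-f(y)\bigr)g(x)+f(y)\bigl(g(x)-g(y)\bigr)\,,
\end{equation*}
so that, after taking $p$-th powers and the double integral over $I\times I$ against the kernel $|x-y|^{-sp-1}$, one gets
\begin{equation*}
[fg]_{s,p}\le \|g\|_{C(\overline I)}[f]_{s,p}+\|f\|_{C(\overline I)}[g]_{s,p}\,,
\end{equation*}
up to the combinatorial constant from the $p$-th-power inequality $(a+b)^p\le 2^{p-1}(a^p+b^p)$. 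Combining the two displays and renaming constants gives the claimed bound with $C=C(s,p)$. The hypothesis $s-\tfrac1p>0$ is not actually needed for this first part; it is the Sobolev embedding $W_p^s(I)\hookrightarrow C(\overline I)$ that makes the right-hand side finite, hence meaningful, for $f,g\in W_p^s$ — so in particular $fg\in W_p^s(I;\mathbb{R})$.

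For the composition statement, I would argue in two steps. First, by the embedding $W_p^s(I;\mathbb{R}^d)\hookrightarrow C(\overline I;\mathbb{R}^d)$ (valid precisely because $s>\tfrac1p$), the image $f(\overline I)$ is a compact subset $K\subset\mathbb{R}^d$, and $F$ restricted to a bounded neighbourhood of $K$ is Lipschitz with some constant $L$; moreover $F\circ f\in C(\overline I)\subset L_p(I)$. It remains to bound the semi-norm: for $x,y\in I$,
\begin{equation*}
|F(f(x))-F(f(y))|\le L\,|f(x)-f(y)|\,,
\end{equation*}
whence $[F\circ f]_{s,p}\le L\,[f]_{s,p}<\infty$ (here $[f]_{s,p}$ for the vector-valued $f$ is understood componentwise, or via the Euclidean norm — these are equivalent). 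Therefore $F\circ f\in W_p^s(I;\mathbb{R})$. Alternatively, and perhaps cleaner for later use, one can first establish the scalar case $d=1$, then handle general $d$ by writing $F\circ f$ through a telescoping over the coordinates of $f$ and applying the product rule and the scalar composition result; but the direct Lipschitz argument above is the shortest route and only uses $F\in C^1$, which is implied by smoothness.

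The only mild subtlety — and the one place to be careful — is making sure all the quantities on the right-hand side are finite, i.e. that $W_p^s(I;\mathbb{R})$ really embeds into $C(\overline I)$; this is exactly where $s-\tfrac1p>0$ enters, via the Sobolev--Slobodeckij embedding theorem (e.g. the one-dimensional case of \cite[Theorem 4.6.1]{Triebel}, analogously to Theorem~\ref{embeddingBUC}). Once this embedding is in hand, both parts are routine manipulations of the semi-norm, with the algebraic splitting of $f(x)g(x)-f(y)g(y)$ and the Lipschitz bound for $F$ doing all the work; no genuine obstacle remains.
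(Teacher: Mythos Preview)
Your proposal is correct and follows essentially the same route as the paper: both split the $W_p^s$-norm into the $L_p$-part and the Slobodeckij semi-norm, use the algebraic identity $f(x)g(x)-f(y)g(y)=(f(x)-f(y))g(x)+f(y)(g(x)-g(y))$ for the product, and for the composition exploit the Sobolev embedding into $C(\overline I)$ to confine $f(\overline I)$ to a compact set and then use a Lipschitz bound on $F$ (the paper writes this out via the integral mean value formula, you invoke it directly). Your observation that the hypothesis $s>\tfrac1p$ is only needed to make the right-hand side finite via the embedding is also accurate.
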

\begin{proof}
	As $W_p^s\left((0,1);\mathbb{R}\right)\hookrightarrow C(\overline{I};\mathbb{R})$ due to the Sobolev Embedding Theorem~\cite[Theorem 4.6.1.(e)]{Triebel}, we obtain for $f,g\in W_p^s\left(I;\mathbb{R}\right)$ the estimate
	\begin{equation*}
	\left\lVert fg\right\rVert_{L_p(I;\mathbb{R})}\leq\left\lVert f\right\rVert_{C(\overline{I})}\left\lVert g\right\rVert_{L_p(I;\mathbb{R})}\leq C(s,p)\lVert f\rVert_{W_p^s(I;\mathbb{R})}\left\lVert g\right\rVert_{W_p^s(I;\mathbb{R})}
	\end{equation*}
	and
	\begin{align*}
	[fg]^p_{s,p}&=\int_I\int_I\frac{\vert(fg)(x)-(fg)(y)\vert^p}{\vert x-y\vert^{sp+1}}\mathrm{d}x\,\mathrm{d}y\\
	&\leq \int_I\int_I\frac{\vert g(x)\vert^p\vert f(x)-f(y)\vert^p+\vert f(y)\vert^p\vert g(x)-g(y)\vert^p}{\vert x-y\vert^{sp+1}}\mathrm{d}x\,\mathrm{d}y\\
	&\leq \left\lVert g\right\rVert_{ C(\overline{I})}^p[f]^p_{s,p}+\left\lVert f\right\rVert_{ C(\overline{I})}[g]_{s,p}^p\leq C(s,p)\lVert f\rVert_{W_p^s(I;\mathbb{R})}\left\lVert g\right\rVert_{W_p^s(I;\mathbb{R})}\,.
	\end{align*}
	Let $F:\mathbb{R}^d\to\mathbb{R}$ be smooth and $f\in W_p^s\left(I;\mathbb{R}^d\right)$. As $f$ lies in $C(\overline{I};\mathbb{R}^d)$, there exists $R>0$ such that $f(\overline{I})\subset\overline{B_R(0)}$. Thus we obtain
	\begin{equation*}
	\left\lVert F(f)\right\rVert^p_{L_p(I;\mathbb{R})}=\int_I\vert F(f(x))\vert^p\mathrm{d}x\leq \max_{z\in \overline{B_R(0)}}\vert F(z)\vert^p \vert I\vert
	\end{equation*}
	where $\vert I\vert$ denotes the length of the interval $I$. Using
	\begin{align*}
	\vert F(f(x))-F(f(y))\vert&=\left\vert\int_0^1 (DF)\left(\xi f(x)+(1-\xi)f(y)\right)\mathrm{d}\xi\,(f(x)-f(y))\right\vert\\
	&\leq \max_{z\in\overline{B_R(0)}}\vert DF(z)\vert \vert f(x)-f(y)\vert
	\end{align*}
	we obtain
	\begin{align*}
	[F(f)]^p_{s,p}=\int_I\int_I\frac{\vert F(f(x))-F(f(y))\vert ^p}{\vert x-y\vert^{sp+1}}\mathrm{d}x\,\mathrm{d}y\leq [f]_{s,p}^p\max_{z\in \overline{B_R(0)}}\vert DF(z)\vert^p\,.
	\end{align*}
\end{proof}

To show well-posedness of evolution equations it is important to have embeddings with constants independent of the time interval one is working with. To this end one needs to change the norm on the solution space. In the following, we collect the results that are needed in our specific case.
\begin{cor}\label{equivalent norm $E_T$}
	Let $p\in(3,\infty)$. For every $T>0$,
	\begin{equation*}
	\vertiii{g}_{W_p^{1,2}\left((0,T)\times(0,1)\right)}:=\left\lVert g\right\rVert_{W_p^{1,2}\left((0,T)\times(0,1)\right)}+\left\lVert g(0)\right\rVert_{W_p^{2-\nicefrac{2}{p}}\left((0,1)\right)}
	\end{equation*}
	defines a norm on $W_p^{1,2}\left((0,T)\times(0,1)\right)$ that is equivalent to the usual one.
\end{cor}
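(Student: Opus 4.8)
The plan is to check first that $\vertiii{\cdot}_{W_p^{1,2}\left((0,T)\times(0,1)\right)}$ is genuinely a norm and then to establish a two--sided estimate against $\left\lVert\cdot\right\rVert_{W_p^{1,2}\left((0,T)\times(0,1)\right)}$. By Theorem~\ref{embeddingBUC} every $g\in W_p^{1,2}\left((0,T)\times(0,1)\right)$ has a representative in $C\left([0,T];W_p^{2-\nicefrac{2}{p}}\left((0,1)\right)\right)$, so the time--trace $g(0)$ is a well--defined element of $W_p^{2-\nicefrac{2}{p}}\left((0,1)\right)$ and the map $g\mapsto\left\lVert g(0)\right\rVert_{W_p^{2-\nicefrac{2}{p}}\left((0,1)\right)}$ is a seminorm on $W_p^{1,2}\left((0,T)\times(0,1)\right)$. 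Consequently $\vertiii{\cdot}$, being the sum of this seminorm and the norm $\left\lVert\cdot\right\rVert_{W_p^{1,2}}$, is absolutely homogeneous and satisfies the triangle inequality; it is definite because $\vertiii{g}=0$ forces $\left\lVert g\right\rVert_{W_p^{1,2}}=0$, hence $g=0$.

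For the equivalence, one inequality is immediate: since $\left\lVert g(0)\right\rVert_{W_p^{2-\nicefrac{2}{p}}\left((0,1)\right)}\geq 0$ we have $\left\lVert g\right\rVert_{W_p^{1,2}\left((0,T)\times(0,1)\right)}\leq\vertiii{g}_{W_p^{1,2}\left((0,T)\times(0,1)\right)}$. For the converse, let $C=C(T,p)>0$ denote the operator norm of the continuous embedding $W_p^{1,2}\left((0,T)\times(0,1)\right)\hookrightarrow C\left([0,T];W_p^{2-\nicefrac{2}{p}}\left((0,1)\right)\right)$ provided by Theorem~\ref{embeddingBUC}. Using the continuous--in--time representative and evaluating at $t=0$ gives
\[
\left\lVert g(0)\right\rVert_{W_p^{2-\nicefrac{2}{p}}\left((0,1)\right)}\leq\left\lVert g\right\rVert_{C\left([0,T];W_p^{2-\nicefrac{2}{p}}\left((0,1)\right)\right)}\leq C\,\left\lVert g\right\rVert_{W_p^{1,2}\left((0,T)\times(0,1)\right)}\,,
\]
so that $\vertiii{g}_{W_p^{1,2}\left((0,T)\times(0,1)\right)}\leq(1+C)\,\left\lVert g\right\rVert_{W_p^{1,2}\left((0,T)\times(0,1)\right)}$. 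Combined with the trivial inequality this proves that the two norms are equivalent, with equivalence constants depending only on $T$ and $p$; the argument applies verbatim with target $(\mathbb{R}^n)^3$ in place of $\mathbb{R}$, which is the situation relevant for the solution space $\boldsymbol{E}_T$.

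There is no real obstacle here: the entire content is the time--trace part of Theorem~\ref{embeddingBUC}. The only point worth emphasising is that the equivalence constant $1+C(T,p)$ does depend on $T$; the purpose of singling out this particular norm is not to obtain $T$--uniformity in the equivalence itself but to have a norm on the solution space in which the initial value appears explicitly, so that later embedding constants (for instance into $C\left([0,T];C^{1+\alpha}\right)$ with prescribed initial trace) can be controlled uniformly in $T$ in the well--posedness estimates. For the corollary as stated, however, fixing $T$ and quoting Theorem~\ref{embeddingBUC} suffices.
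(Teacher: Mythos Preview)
Your proof is correct and follows exactly the same route as the paper, which simply states that the equivalence is a consequence of Theorem~\ref{embeddingBUC}. You have spelled out the details (norm axioms, both inequalities, the role of the embedding constant $C(T,p)$) that the paper leaves implicit, and your remarks on the $T$--dependence and the purpose of this norm are accurate.
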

\begin{proof}
	This is a consequence of Theorem~\ref{embeddingBUC}.
\end{proof}

\begin{lem}\label{extensionE_T}
	Let $T_0$ be positive, $T\in(0,T_0)$ and $p\in(3,\infty)$. There exists a linear operator
	\begin{equation*}
	\boldsymbol{E}: W_p^{1,2}\left((0,T)\times(0,1)\right)\to W_p^{1,2}\left((0,T_0)\times(0,1)\right)
	\end{equation*}
	such that for all $g\in W_p^{1,2}\left((0,T)\times(0,1)\right)$, $\left(\boldsymbol{E}g\right)_{|(0,T)}=g$ and
	\begin{equation*}
	\left\lVert\boldsymbol{E}g\right\rVert_{W_p^{1,2}\left((0,T_0)\times(0,1)\right)}\leq C\left(\left\lVert  g\right\rVert_{W_p^{1,2}\left((0,T)\times(0,1)\right)}+\left\lVert  g(0)\right\rVert_{W_p^{2-\nicefrac{2}{p}}(0,1)}\right)=C\vertiii{g}_{W_p^{1,2}\left((0,T)\times(0,1)\right)}
	\end{equation*}
	with a constant $C=C(p,T_0)$ depending only on $p$ and $T_0$.
\end{lem}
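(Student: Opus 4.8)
The goal is to construct a bounded linear extension operator from $W_p^{1,2}((0,T)\times(0,1))$ to $W_p^{1,2}((0,T_0)\times(0,1))$ whose operator bound involves only $p$ and $T_0$ (and crucially not $T$), at the price of also controlling the trace $g(0)$ in $W_p^{2-\nicefrac{2}{p}}(0,1)$. The natural strategy is to reduce everything to a well-chosen reflection/extension in the time variable only, since the spatial interval $(0,1)$ is fixed throughout and $\boldsymbol{E}_T$ is an intersection of two Bochner spaces over the time interval.

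\medskip

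\textbf{Plan.} First I would view $g$ as a function of $t\in(0,T)$ with values in the Banach space $X := L_p((0,1);(\mathbb{R}^n)^3)$, so that $g\in W_p^1((0,T);X)$, together with the extra integrability $g\in L_p((0,T);Y)$ where $Y := W_p^2((0,1);(\mathbb{R}^n)^3)$. The classical higher-order reflection extension for $W_p^1$ of an interval extends scalar-valued Sobolev functions, and the same construction works verbatim for Banach-space-valued functions: one reflects across the endpoint $t=T$ using a finite linear combination $\sum_k c_k g(2T - t/k)$ (or the standard Babich–Lichtenstein/Hestenes formula) chosen so that the first derivative matches, then multiplies by a fixed cutoff supported in $[0,T_0)$. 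This produces $\boldsymbol{E}g$ with $(\boldsymbol{E}g)_{|(0,T)} = g$, and because the reflection formula is linear and the cutoff is fixed, $\boldsymbol{E}$ is linear. The reflection simultaneously extends the $L_p((0,T);Y)$ part, since reflecting in $t$ commutes with taking spatial derivatives and with the $Y$-norm. So far this gives a bound with a constant depending on $T$, because reflection near $t=T$ needs $T$ bounded away from $0$ to control the $L_p$-norm over the reflected region and because the obvious cutoff depends on $T$.

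\medskip

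\textbf{Removing the $T$-dependence.} The $T$-dependence is eliminated by splitting off the trace at $t=0$. Write $g = g_\flat + v$ where $g_\flat(t,x)$ is a fixed $T_0$-independent extension of the initial datum: take $g_\flat := \mathcal{E}_0\big(g(0)\big)$ for a bounded linear operator $\mathcal{E}_0 : W_p^{2-\nicefrac{2}{p}}(0,1) \to W_p^{1,2}((0,T_0)\times(0,1))$ — such an operator exists because $W_p^{2-\nicefrac{2}{p}}$ is exactly the trace space of $W_p^{1,2}$ at $t=0$ (this is the content of the first embedding in Theorem~\ref{embeddingBUC}, together with the surjectivity of the trace, a standard fact of Solonnikov's theory~\cite{solonnikov2}); concretely one may solve the heat equation on $(0,T_0)\times(0,1)$ with data $g(0)$ and Neumann (or any fixed) boundary conditions. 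Then $v := g - g_\flat|_{(0,T)}$ lies in $W_p^{1,2}((0,T)\times(0,1))$ with $v(0) = 0$, and for functions vanishing at $t=0$ one has the $T$-uniform estimate on the \emph{trivial} extension by reflection: extending $v$ by odd-type reflection across $t=T$ and then by zero beyond a fixed collar works, but the cleanest route is to first extend $v$ by zero to $(0,\infty)\times(0,1)$ — legitimate precisely because $v(0)=0$ forces the extension across $t=0$ to stay in $W_p^1$ in time — wait, that is the wrong endpoint. Instead: because $v(0)=0$, extend $v$ to $(-\infty,T)\times(0,1)$ by reflecting across $t=0$ in the $W_p^1$-preserving way (which no longer needs a cutoff near $0$ since the reflected function automatically vanishes and matches there), obtaining $\tilde v$ on $(-T,T)\times(0,1)$; then reflect across $t=T$ and multiply by a fixed smooth cutoff that equals $1$ on $(-1,1)$ and is supported in $(-2, T_0)$. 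Since reflection operators have norm bounded by an absolute constant and the cutoff is fixed, this yields $\|\boldsymbol{E}v\|_{W_p^{1,2}((0,T_0)\times(0,1))} \le C(p,T_0)\,\|v\|_{W_p^{1,2}((0,T)\times(0,1))}$ with no dependence on $T$. Finally set $\boldsymbol{E}g := g_\flat + \boldsymbol{E}v$; this is linear in $g$ (as $g(0)$ depends linearly on $g$ via the bounded trace), restricts to $g$ on $(0,T)$, and satisfies
\begin{equation*}
\|\boldsymbol{E}g\|_{W_p^{1,2}((0,T_0)\times(0,1))} \le \|g_\flat\|_{W_p^{1,2}((0,T_0)\times(0,1))} + \|\boldsymbol{E}v\|_{W_p^{1,2}((0,T_0)\times(0,1))} \le C(p,T_0)\Big(\|g(0)\|_{W_p^{2-\nicefrac{2}{p}}(0,1)} + \|v\|_{W_p^{1,2}((0,T)\times(0,1))}\Big),
\end{equation*}
and since $\|v\|_{W_p^{1,2}((0,T)\times(0,1))} \le \|g\|_{W_p^{1,2}((0,T)\times(0,1))} + \|g_\flat\|_{W_p^{1,2}((0,T)\times(0,1))} \le \|g\|_{W_p^{1,2}((0,T)\times(0,1))} + C(p,T_0)\|g(0)\|_{W_p^{2-\nicefrac{2}{p}}(0,1)}$, we arrive at the claimed bound by $C\,\vertiii{g}_{W_p^{1,2}((0,T)\times(0,1))}$.

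\medskip

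\textbf{Main obstacle.} The one genuinely delicate point is the $T$-uniformity: a naive extension produces constants that blow up as $T\to 0$ (the reflected collar near $t=0$ or $t=T$ is then comparable in size to the whole interval, and cutoff derivatives scale like $1/T$). Splitting off $g_\flat$ so that the remainder vanishes at $t=0$ is exactly what cures this, because a function vanishing at an endpoint can be reflected there without any cutoff, and the resulting reflection operators are bounded by absolute constants. Everything else — linearity, the restriction property, boundedness of $\mathcal{E}_0$ — is standard trace theory for parabolic Sobolev spaces and is quoted from Theorem~\ref{embeddingBUC} and~\cite{solonnikov2}.
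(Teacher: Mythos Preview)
Your approach is essentially the same as the paper's: split off an extension $g_\flat$ of the initial trace $g(0)$ obtained by solving a linear parabolic problem (the paper cites maximal $L_p$--regularity in \cite{Prusssimonett}), and then extend the remainder $v=g-g_\flat$, which has $v(0)=0$, by reflection in time with $T$--independent constants.

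The only noteworthy difference is in the reflection step. The paper simply reflects $v$ across $t=T$, obtaining $\tilde v$ on $(0,2T)$; since $\tilde v(2T)=v(0)=0$, one can then extend $\tilde v$ by zero to $(0,\infty)$ and restrict to $(0,T_0)$, all with absolute constants. Your detour through an additional reflection across $t=0$ and a fixed cutoff is unnecessary and in fact introduces a small glitch: a cutoff that equals $1$ only on $(-1,1)$ need not equal $1$ on all of $(0,T)$ when $T>1$, so the restriction property would fail. Replacing your two reflections plus cutoff by the paper's single reflection at $t=T$ followed by zero extension fixes this cleanly and shortens the argument.
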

\begin{proof}
	In the case that $g(0)=0$, the function $g$ can be extended to $(0,\infty)$ by reflecting it with respect to the axis $t=T$. The general statement can be deduced from this case by solving a linear parabolic equation of second order and using results on maximal regularity as given in~\cite[Proposition 3.4.3]{Prusssimonett}.
\end{proof}
Given $d\in\mathbb{N}$ we obtain an extension operator on the space $W_p^{1,2}\left((0,T)\times(0,1);\mathbb{R}^d\right)$
by applying $\boldsymbol{E}$ to every component.

\begin{lem}\label{equivnorm2}
	Let $p\in(1,\infty)$ and $\alpha>\frac{1}{p}$. For every positive $T$,
	\begin{equation*}
	\vertiii{b}_{W_p^{\alpha}\left((0,T);\mathbb{R}\right)}:=\left\lVert b\right\rVert_{W_p^{\alpha}\left((0,T);\mathbb{R}\right)}+\vert b(0)\vert
	\end{equation*}
	defines a norm on $W_p^{\alpha}\left((0,T);\mathbb{R}\right)$ that is equivalent to the usual one.
\end{lem}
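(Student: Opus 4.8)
The plan is to show that the two norms are equivalent by establishing only one inequality and invoking the open mapping theorem for the other. One direction is immediate: since $\alpha > \frac{1}{p}$, the Sobolev--Slobodeckij embedding $W_p^{\alpha}((0,T);\mathbb{R}) \hookrightarrow C([0,T];\mathbb{R})$ holds, so point evaluation $b \mapsto b(0)$ is a bounded linear functional and hence
\begin{equation*}
\vertiii{b}_{W_p^{\alpha}((0,T);\mathbb{R})} = \left\lVert b\right\rVert_{W_p^{\alpha}((0,T);\mathbb{R})} + \vert b(0)\vert \leq (1 + C_{\mathrm{emb}})\left\lVert b\right\rVert_{W_p^{\alpha}((0,T);\mathbb{R})}.
\end{equation*}
The reverse inequality $\left\lVert b\right\rVert_{W_p^{\alpha}} \leq C \vertiii{b}_{W_p^{\alpha}}$ is the substantive point. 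Since $\vertiii{\cdot}$ is manifestly a norm dominating the usual one, and $W_p^{\alpha}((0,T);\mathbb{R})$ is a Banach space under $\left\lVert\cdot\right\rVert_{W_p^{\alpha}}$, it suffices to check that $W_p^{\alpha}((0,T);\mathbb{R})$ is \emph{also} complete under $\vertiii{\cdot}$; then the identity map between the two complete normed spaces is a continuous bijection, and the open mapping theorem gives a bounded inverse. But completeness under $\vertiii{\cdot}$ is clear once we have the embedding above: a $\vertiii{\cdot}$--Cauchy sequence is in particular $\left\lVert\cdot\right\rVert_{W_p^{\alpha}}$--Cauchy, hence converges to some $b$ in $W_p^{\alpha}$; the embedding forces $b_n(0) \to b(0)$, so $\vertiii{b_n - b} \to 0$.

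An equally short self-contained alternative, avoiding the open mapping theorem, is a direct interpolation/scaling estimate: for any $b \in W_p^{\alpha}((0,T);\mathbb{R})$ write $b = (b - b(0)) + b(0)$, estimate the constant function $b(0)$ by $\left\lVert b(0)\right\rVert_{W_p^\alpha((0,T))} = \vert b(0)\vert \cdot T^{1/p}$ (the seminorm of a constant vanishes), and control $b - b(0)$ using that it vanishes at $0$ together with the embedding constant; one obtains $\left\lVert b\right\rVert_{W_p^\alpha} \leq C(1 + T^{1/p})\vertiii{b}$. This is enough to conclude equivalence on each fixed interval $(0,T)$.

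The main (and only real) obstacle is the dependence of constants on $T$: the statement as phrased only claims equivalence for \emph{every} fixed $T>0$, with no uniformity, so this is harmless here, but one should be careful that the cleanest argument — via the open mapping theorem — does not a priori give a $T$-independent constant, and the explicit-splitting argument shows the constant blows up as $T\to\infty$ (through the $T^{1/p}$ factor) and as $T\to 0$ (through the embedding constant of $W_p^\alpha((0,T)) \hookrightarrow C([0,T])$). Since the lemma asks for nothing more than pointwise-in-$T$ equivalence, either route completes the proof; I would present the open mapping theorem argument for brevity, as it parallels the proof of Corollary~\ref{equivalent norm $E_T$} via Theorem~\ref{embeddingBUC}.
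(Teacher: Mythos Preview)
Your proof is correct and uses the same key ingredient as the paper (the Sobolev embedding $W_p^\alpha((0,T);\mathbb{R})\hookrightarrow C([0,T];\mathbb{R})$ for $\alpha>\nicefrac{1}{p}$), but you have the two directions backwards in your assessment of difficulty. The inequality you call ``substantive'', namely $\left\lVert b\right\rVert_{W_p^\alpha}\leq C\vertiii{b}_{W_p^\alpha}$, is in fact trivial with $C=1$, since by definition $\vertiii{b}=\left\lVert b\right\rVert_{W_p^\alpha}+\lvert b(0)\rvert\geq \left\lVert b\right\rVert_{W_p^\alpha}$. Your entire second and third paragraphs --- the open mapping theorem argument, the completeness check, and the splitting $b=(b-b(0))+b(0)$ --- are therefore unnecessary. (Indeed, in your open mapping paragraph you yourself note that $\vertiii{\cdot}$ ``manifestly'' dominates $\left\lVert\cdot\right\rVert_{W_p^\alpha}$, which is precisely the inequality you had just declared to be the substantive point.)

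The only nontrivial direction is the one you dispatch in your first paragraph: $\vertiii{b}\leq (1+C_{\mathrm{emb}})\left\lVert b\right\rVert_{W_p^\alpha}$ via the embedding. That single line is the whole proof, and it is exactly what the paper does.
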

\begin{proof}
	 This is an immediate consequence of the Sobolev Embedding Theorem~\cite[Theorem 4.6.1.(e)]{Triebel}. 
\end{proof}
\begin{lem}\label{extensionboundarydata}
	Let $T$ be positive, $p\in(1,\infty)$ and $\alpha>\frac{1}{p}$. There exists a linear operator
	\begin{equation*}
	E:W_p^{\alpha}\left((0,T);\mathbb{R}\right)\to W_p^{\alpha}\left((0,\infty);\mathbb{R}\right)
	\end{equation*}
	such that for all $b\in W_p^{\alpha}\left((0,T);\mathbb{R}\right)$, $\left(Eb\right)_{|(0,T)}=b$ and
	\begin{equation*}
	\left\lVert Eb\right\rVert_{W_p^{\alpha}\left((0,\infty);\mathbb{R}\right)}\leq C_p\left(\left\lVert b\right\rVert_{W_p^{\alpha}\left((0,T);\mathbb{R}\right)}+\lvert b(0)\rvert\right)=C_p	\vertiii{b}_{W_p^{\alpha}\left((0,T);\mathbb{R}\right)}
	\end{equation*}
	with a constant $C_p$ depending only on $p$.
\end{lem}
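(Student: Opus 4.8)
The plan is to construct the extension operator $E$ in two stages, mimicking the classical even‑reflection construction but being careful about the fractional nature of the norm. First I would reduce to the case $b(0)=0$: given an arbitrary $b\in W_p^\alpha((0,T);\R)$, since $\alpha>\frac1p$ the trace $b(0)$ is well defined (this is exactly the embedding $W_p^\alpha\hookrightarrow C([0,T])$ from the Sobolev Embedding Theorem quoted in Lemma~\ref{equivnorm2}), so I subtract off a fixed smooth cut‑off bump $\chi$ with $\chi(0)=1$ supported in, say, $[0,1)$ (or rescaled to fit inside $(0,T)$ when $T$ is small) and work with $b-b(0)\chi$, which now has vanishing trace at $0$. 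The function $b(0)\chi$ is trivially extendable to $(0,\infty)$ with norm controlled by $\vert b(0)\vert$ times a universal constant depending only on $p$ and the fixed choice of $\chi$, so it suffices to extend functions vanishing at the left endpoint.

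For a function $b$ with $b(0)=0$, I would first extend past the right endpoint $T$ by even reflection across $t=T$: set $\tilde b(t)=b(2T-t)$ for $t\in(T,2T)$, obtaining a function on $(0,2T)$; the $W_p^\alpha$ norm of this reflection on $(0,2T)$ is bounded by a universal multiple of $\|b\|_{W_p^\alpha((0,T))}$, which one checks by splitting the Slobodeckij double integral over $(0,2T)^2$ into the three pieces $(0,T)^2$, $(T,2T)^2$, and the mixed rectangle, changing variables in the reflected pieces, and using the elementary inequality $\vert 2T-x-y\vert\ge \tfrac12\vert x-y\vert$ on the mixed region together with the symmetry of the reflection. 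This produces a function on $(0,2T)$ whose right trace at $2T$ equals $b(0)=0$, so I may now extend it by zero to all of $(0,\infty)$; since the trace at $2T$ vanishes, the extension by zero stays in $W_p^\alpha((0,\infty);\R)$ with no loss of norm — here again the point is that $\alpha<1$ means the Slobodeckij integral picks up only a convergent boundary contribution, which is zero precisely because the trace is zero. Composing: $E b := $ (reflection then zero‑extension of $b-b(0)\chi$) $+ b(0)\chi$, with $\chi$ itself zero‑extended. Linearity is clear from the construction, the restriction property $(Eb)|_{(0,T)}=b$ holds by design, and chaining the two norm bounds gives $\|Eb\|_{W_p^\alpha((0,\infty))}\le C_p(\|b\|_{W_p^\alpha((0,T))}+\vert b(0)\vert)$, which is the claimed estimate since the right‑hand side equals $C_p\vertiii{b}_{W_p^\alpha((0,T);\R)}$ by Lemma~\ref{equivnorm2}.

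The main technical point — and the place where one must be slightly careful — is establishing that the constant $C_p$ is genuinely independent of $T$. The even‑reflection step is scale‑covariant in the semi‑norm only after noting that $[\tilde b]_{\alpha,p}$ over the three sub‑regions is bounded by $C[b]_{\alpha,p,(0,T)}$ with $C$ universal; the $L_p$ part is trivially doubled. The zero‑extension step is where the vanishing trace is essential: extending a $W_p^\alpha$ function by zero across a point where it does not vanish would introduce a jump and destroy membership for $\alpha\ge\frac1p$, so the preliminary subtraction of $b(0)\chi$ and the even reflection (which transports the good endpoint trace) are both needed to arrange a zero trace at the gluing point. Finally, when $T$ is small the cut‑off $\chi$ must be rescaled to be supported in $(0,T)$, which multiplies its $W_p^\alpha$ norm by a negative power of $T$; to avoid a $T$‑dependent constant here, one should instead choose $\chi$ supported in $(0,1)$ and handle $T\le 1$ and $T\ge 1$ separately, or equivalently extend $b(0)\chi$ directly as a function on $(0,\infty)$ built from a fixed profile independent of $T$. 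With that care, all constants are absolute (depending only on $p$), completing the proof.
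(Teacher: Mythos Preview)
Your argument is correct and follows essentially the same strategy as the paper: reduce to $b(0)=0$, then reflect across $t=T$ (and extend by zero past $2T$, which the paper leaves implicit). The one place where the paper is slightly slicker is the reduction step: rather than introduce a cutoff $\chi$ supported in $(0,T)$ and then worry about its rescaling for small $T$, the paper simply invokes surjectivity of the trace $W_p^\alpha((0,\infty);\mathbb{R})\to\mathbb{R}$, i.e.\ it fixes once and for all a single function $g\in W_p^\alpha((0,\infty);\mathbb{R})$ with $g(0)=1$ and subtracts $b(0)\,g_{|(0,T)}$. This is precisely the ``fixed profile independent of $T$'' you propose in your final paragraph, and it makes the $T$--independence of the constant immediate.
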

\begin{proof}
	In the case $b(0)=0$ the operator obtained by reflecting the function with respect to the axis $t=T$ has the desired properties. The general statement can be deduced from this case using surjectivity of the temporal trace
	$
	_{|t=0}:W_p^{\alpha}\left((0,\infty);\mathbb{R}\right)\to\mathbb{R}\,.
	$
\end{proof}

\begin{thm}[Uniform embedding I]\label{embeddingBUCimeindependent}
	Let $p\in(3,\infty)$ and $T_0$ be positive. There exist constants $C(p)$ and $C\left(T_0,p\right)$ such that for all $T\in (0,T_0]$ and all $g\in W_p^{1,2}\left((0,T)\times(0,1)\right)$,
	\begin{equation*}
	\left\lVert g\right\rVert_{C\left([0,T];C^1\left([0,1]\right)\right)}\leq C(p)\left\lVert g\right\rVert_{C\left([0,T];W_p^{2-\nicefrac{2}{p}}\left((0,1)\right)\right)}\leq C\left(T_0,p\right)\vertiii{g}_{W_p^{1,2}\left((0,T)\times(0,1)\right)}\,.
	\end{equation*}
\end{thm}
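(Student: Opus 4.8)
The plan is to combine the time-independent Sobolev embedding $W_p^{2-\nicefrac{2}{p}}((0,1))\hookrightarrow C^1([0,1])$ (valid because $p>3$ forces $2-\nicefrac{2}{p}>1+\nicefrac{1}{p}$, so Theorem~\ref{embeddingBUC} with $\alpha\in(0,1-\nicefrac{3}{p}]$ applies on the spatial variable alone) with the extension operator of Lemma~\ref{extensionE_T} to transfer the time-dependent embedding of Theorem~\ref{embeddingBUC} from the reference interval $(0,T_0)$ to an arbitrary subinterval $(0,T)$ with a constant that does not deteriorate as $T\to 0$. The first inequality is immediate: for every fixed $t\in[0,T]$ the function $g(t)\in W_p^{2-\nicefrac{2}{p}}((0,1))$, and the spatial Sobolev embedding gives $\lVert g(t)\rVert_{C^1([0,1])}\le C(p)\lVert g(t)\rVert_{W_p^{2-\nicefrac{2}{p}}((0,1))}$ with a constant depending only on $p$ (the domain $(0,1)$ is fixed); taking the supremum over $t\in[0,T]$ yields $\lVert g\rVert_{C([0,T];C^1([0,1]))}\le C(p)\lVert g\rVert_{C([0,T];W_p^{2-\nicefrac{2}{p}}((0,1)))}$.

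For the second inequality, fix $T\in(0,T_0]$ and $g\in W_p^{1,2}((0,T)\times(0,1))$. First I would dispose of the trivial case $T=T_0$, where Theorem~\ref{embeddingBUC} applies directly on $(0,T_0)$. For $T<T_0$, apply the extension operator $\boldsymbol{E}$ from Lemma~\ref{extensionE_T} (componentwise) to obtain $\boldsymbol{E}g\in W_p^{1,2}((0,T_0)\times(0,1))$ with $(\boldsymbol{E}g)_{|(0,T)}=g$ and
\begin{equation*}
\lVert \boldsymbol{E}g\rVert_{W_p^{1,2}((0,T_0)\times(0,1))}\le C(p,T_0)\,\vertiii{g}_{W_p^{1,2}((0,T)\times(0,1))}.
\end{equation*}
Now Theorem~\ref{embeddingBUC}, applied on the fixed interval $(0,T_0)$, gives a constant $C(T_0,p)$ with
\begin{equation*}
\lVert \boldsymbol{E}g\rVert_{C([0,T_0];W_p^{2-\nicefrac{2}{p}}((0,1)))}\le C(T_0,p)\,\lVert \boldsymbol{E}g\rVert_{W_p^{1,2}((0,T_0)\times(0,1))}.
\end{equation*}
Since $\boldsymbol{E}g$ restricts to $g$ on $[0,T]$, the left-hand side dominates $\lVert g\rVert_{C([0,T];W_p^{2-\nicefrac{2}{p}}((0,1)))}$. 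Chaining the two displays produces the desired estimate with a constant $C(T_0,p)$ independent of $T\in(0,T_0]$.

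The only genuinely delicate point — and the one I would flag as the main obstacle — is ensuring that the $T\to 0$ behaviour is controlled, i.e.\ that the constant in the final estimate really does not blow up for small $T$. This is precisely why one must route through the extension operator (whose bound in Lemma~\ref{extensionE_T} is in terms of the \emph{modified} norm $\vertiii{\cdot}$ with a $T$-independent constant) rather than applying Theorem~\ref{embeddingBUC} on $(0,T)$ directly, since the latter would a priori carry a $T$-dependent constant. The appearance of $\lVert g(0)\rVert_{W_p^{2-\nicefrac{2}{p}}((0,1))}$ inside $\vertiii{g}$ is harmless: by Corollary~\ref{equivalent norm $E_T$} the temporal trace at $t=0$ is controlled by $\lVert g\rVert_{W_p^{1,2}}$ up to the same mechanism, but here we simply keep it in the statement as part of $\vertiii{\cdot}$. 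No further ingredients are needed.
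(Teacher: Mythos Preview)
Your proof is correct and follows essentially the same route as the paper: extend $g$ to $(0,T_0)$ via Lemma~\ref{extensionE_T}, apply the embedding of Theorem~\ref{embeddingBUC} on the fixed interval $(0,T_0)$, and restrict back, with the first inequality coming from the pointwise-in-$t$ spatial Sobolev embedding. Your explicit treatment of the endpoint $T=T_0$ and your remark on why one must route through $\boldsymbol{E}$ (rather than apply Theorem~\ref{embeddingBUC} directly on $(0,T)$) are both apt and slightly more careful than the paper's terse argument.
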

\begin{proof}
	Let $T\in (0,T_0]$ be arbitrary, $g\in W_p^{1,2}\left((0,T)\times(0,1)\right)$ and $\boldsymbol{E}g$ the extension according to Lemma~\ref{extensionE_T}. Then $\boldsymbol{E}g$ lies in $W_p^{1,2}\left(\left(0,T_0\right)\times(0,1)\right)$ and Theorem~\ref{embeddingBUC} and Lemma~\ref{extensionE_T} imply
	\begin{align*}
	\left\lVert g\right\rVert_{C\left([0,T];W_p^{2-\nicefrac{2}{p}}\left((0,1)\right)\right)}&\leq \left\lVert \boldsymbol{E}g\right\rVert_{C\left(\left[0,T_0\right];W_p^{2-\nicefrac{2}{p}}\left((0,1)\right)\right)}\leq C\left(T_0,p\right)\left\lVert\boldsymbol{E}g\right\rVert_{W_p^{1,2}\left(\left(0,T_0\right)\times(0,1)\right)}\\
	&\leq C\left(T_0,p\right)\vertiii{g}_{W_p^{1,2}\left((0,T)\times(0,1)\right)}\,.
	\end{align*}
\end{proof}

\begin{thm}[Uniform embedding II]\label{uniformcalphac1}
	Let $p\in(3,\infty)$, $\theta\in\left(\frac{1+\nicefrac{1}{p}}{2-\nicefrac{2}{p}},1\right)$, $\delta\in\left(0,1-\nicefrac{1}{p}\right)$ and $T_0$ be positive. There exists a constant $C\left(T_0,p,\theta,\delta\right)>0$ such that for all $T\in(0,T_0]$ there holds the embedding 
	\begin{equation*}
	W_p^{1,2}\left((0,T)\times(0,1)\right)\hookrightarrow C^{(1-\theta)\left(1-\nicefrac{1}{p}-\delta\right)}\left([0,T];C^1\left([0,1]\right)\right)
	\end{equation*}
	and all $g\in W_p^{1,2}\left((0,T)\times(0,1)\right)$ satisfy the uniform estimate
	\begin{equation*}
	\left\lVert g\right\rVert_{C^{(1-\theta)\left(1-\nicefrac{1}{p}-\delta\right)}\left([0,T];C^1\left([0,1]\right)\right)}\leq C\left(T_0,p,\theta,\delta\right)\vertiii{g}_{W_p^{1,2}\left((0,T)\times(0,1)\right)}\,.
	\end{equation*}
\end{thm}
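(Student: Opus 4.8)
The plan is to reduce the time-dependent statement to a single embedding on a fixed time interval $(0,T_0)$ by means of the extension operator $\boldsymbol{E}$ from Lemma~\ref{extensionE_T}, and then to obtain the H\"older-in-time regularity by interpolation between the two endpoint spaces that constitute $W_p^{1,2}$. Concretely, for $g\in W_p^{1,2}\bigl((0,T)\times(0,1)\bigr)$ with $T\le T_0$, I would pass to $\boldsymbol{E}g\in W_p^{1,2}\bigl((0,T_0)\times(0,1)\bigr)$, prove the embedding for the fixed interval $(0,T_0)$, apply it to $\boldsymbol{E}g$, and then use $(\boldsymbol{E}g)_{|(0,T)}=g$ together with the bound $\lVert\boldsymbol{E}g\rVert_{W_p^{1,2}((0,T_0)\times(0,1))}\le C(p,T_0)\vertiii{g}_{W_p^{1,2}((0,T)\times(0,1))}$ to transfer the estimate back to $g$. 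The only subtlety in this reduction is that the H\"older seminorm on $[0,T]$ is dominated by the H\"older seminorm on $[0,T_0]$ of any extension, so the restriction step costs nothing; this is why the estimate is uniform in $T\in(0,T_0]$, with the constant absorbing only $T_0$.

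For the core embedding on the fixed interval I would argue as follows. By the mixed-derivative / trace theory for $W_p^{1,2}$ (as in~\cite{DenkSaalSeiler}, cf.\ the first embedding in Theorem~\ref{embeddingBUC}), a function in $W_p^{1,2}\bigl((0,T_0)\times(0,1)\bigr)$ lies in $W_p^{\theta}\bigl((0,T_0);W_p^{2(1-\theta)}(0,1)\bigr)$ for each $\theta\in(0,1)$, with norm controlled by the $W_p^{1,2}$-norm; equivalently this is the statement that $W_p^{1,2}$ is contained in the real-interpolation spaces between $L_p\bigl((0,T_0);W_p^2\bigr)$ and $W_p^1\bigl((0,T_0);L_p\bigr)$. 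Choosing $\theta$ in the stated range $\theta>\frac{1+\nicefrac1p}{2-\nicefrac2p}$ guarantees $2(1-\theta)<2-\nicefrac2p-\text{(something)}$; more to the point it guarantees $2(1-\theta)>1+\nicefrac1p$ is \emph{false}—rather one wants $2(1-\theta)>1+\nicefrac1p$ to get $C^1$ in space, so I would instead read the condition as ensuring $W_p^{2(1-\theta)}(0,1)\hookrightarrow C^1([0,1])$, which by the Sobolev embedding requires $2(1-\theta)-\nicefrac1p>1$, i.e.\ $1-\theta>\frac{1+\nicefrac1p}{2}$, equivalently $\theta<\frac{1-\nicefrac1p}{2}$; matching this against the hypothesis fixes the precise admissible interval for $\theta$. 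Then $W_p^{\theta}\bigl((0,T_0);W_p^{2(1-\theta)}(0,1)\bigr)\hookrightarrow W_p^{\theta}\bigl((0,T_0);C^1([0,1])\bigr)$, and a one-dimensional Sobolev–Slobodeckij embedding in the time variable gives $W_p^{\theta}\bigl((0,T_0);X\bigr)\hookrightarrow C^{\theta-\nicefrac1p}\bigl([0,T_0];X\bigr)$ for any Banach space $X$ provided $\theta-\nicefrac1p>0$. Writing $\theta-\nicefrac1p$ in the form $(1-\theta)(1-\nicefrac1p-\delta)$ after renaming parameters — or, more honestly, tracking how the exponents combine when one optimises the interpolation — yields the claimed H\"older exponent $(1-\theta)\bigl(1-\nicefrac1p-\delta\bigr)$, with $\delta\in(0,1-\nicefrac1p)$ the slack one must leave in the spatial Sobolev embedding to land strictly inside $C^1$.

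The step I expect to be the main obstacle is the careful bookkeeping of exponents: one has two free parameters (the interpolation parameter $\theta$ and the Sobolev-embedding slack $\delta$) and three constraints (time H\"older exponent positive, spatial space embeds into $C^1$, and the resulting exponent equals the target $(1-\theta)(1-\nicefrac1p-\delta)$), so the argument must be arranged so that the hypotheses $\theta\in\bigl(\frac{1+\nicefrac1p}{2-\nicefrac2p},1\bigr)$ and $\delta\in(0,1-\nicefrac1p)$ are exactly what is needed to make all three hold simultaneously and with room to spare. A clean way to do this is to first apply the mixed-regularity embedding to get, for the specific $\theta$ in the hypothesis, membership in $C^{(1-\theta)(1-\nicefrac1p-\delta)}\bigl([0,T_0];W_p^{?}(0,1)\bigr)$ with the spatial index chosen so that the interpolation identity $W_p^{1,2}=$ (a space continuously embedded in this H\"older-Bochner space) is available directly from~\cite{DenkSaalSeiler} or~\cite{Prusssimonett}, and then to embed the spatial factor into $C^1$ using $\delta>0$; the uniformity in $T$ is then inherited verbatim from Theorem~\ref{embeddingBUC} and Lemma~\ref{extensionE_T} exactly as in the proof of Theorem~\ref{embeddingBUCimeindependent}.
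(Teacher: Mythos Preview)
Your reduction to the fixed interval $(0,T_0)$ via the extension operator $\boldsymbol{E}$ is correct and is exactly what the paper does at the end of its proof. The core embedding on $(0,T_0)$, however, is obtained by a different route in the paper, and your own route, while in principle workable, is not carried out correctly.

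The paper does \emph{not} use the mixed-derivative embedding $W_p^{1,2}\hookrightarrow W_p^{s}\bigl((0,T_0);W_p^{2(1-s)}\bigr)$. Instead it establishes two endpoint embeddings,
\[
W_p^{1,2}\hookrightarrow C^{\,1-\nicefrac1p-\delta}\bigl([0,T_0];L_p\bigr)\quad\text{(from \cite{Simon})},\qquad
W_p^{1,2}\hookrightarrow C\bigl([0,T_0];W_p^{2-\nicefrac2p}\bigr)\quad\text{(Theorem~\ref{embeddingBUC})},
\]
and then interpolates \emph{spatially} at level $\theta$ using $\bigl(L_p,W_p^{2-\nicefrac2p}\bigr)_{\theta,p}=W_p^{\theta(2-\nicefrac2p)}$ together with the multiplicative inequality $\|f\|_{W_p^{\theta(2-\nicefrac2p)}}\le C\|f\|_{L_p}^{\,1-\theta}\|f\|_{W_p^{2-\nicefrac2p}}^{\,\theta}$. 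A direct computation with this inequality gives
\[
C\bigl([0,T_0];W_p^{2-\nicefrac2p}\bigr)\cap C^{\alpha}\bigl([0,T_0];L_p\bigr)\hookrightarrow C^{(1-\theta)\alpha}\bigl([0,T_0];W_p^{\theta(2-\nicefrac2p)}\bigr)\,.
\]
The hypothesis $\theta>\frac{1+\nicefrac1p}{2-\nicefrac2p}$ is precisely $\theta(2-\nicefrac2p)-\nicefrac1p>1$, so $W_p^{\theta(2-\nicefrac2p)}\hookrightarrow C^1$, and taking $\alpha=1-\nicefrac1p-\delta$ produces the exponent $(1-\theta)(1-\nicefrac1p-\delta)$ with no renaming needed.

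Your route via the mixed-derivative theorem uses an interpolation parameter that is \emph{not} the $\theta$ of the statement: to get $W_p^{2(1-s)}\hookrightarrow C^1$ you need $2(1-s)-\nicefrac1p>1$, i.e.\ $s<\tfrac12(1-\nicefrac1p)$, so your $s$ must be small, whereas the statement's $\theta$ lies in $\bigl(\frac{1+\nicefrac1p}{2-\nicefrac2p},1\bigr)$ and is close to $1$. You notice this mismatch but never resolve it; ``renaming parameters'' is not an argument. It \emph{can} be resolved---both routes yield H\"older exponents ranging over the same open interval $\bigl(0,\tfrac12-\tfrac{3}{2p}\bigr)$, so for each admissible pair $(\theta,\delta)$ one may set $s:=(1-\theta)(1-\nicefrac1p-\delta)+\nicefrac1p$ and check that $s\in\bigl(\nicefrac1p,\tfrac12(1-\nicefrac1p)\bigr)$---but you must actually perform this verification. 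As written, your argument has a genuine gap precisely at the point you yourself flag as ``the main obstacle''.
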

\begin{proof}
By~\cite[Corollary 26]{Simon} there holds for any $\delta\in\left(0,1-\nicefrac{1}{p}\right)$ the continuous embedding
	\begin{equation*}
	W_p^{1,2}\left((0,T_0)\times(0,1)\right)\hookrightarrow C^{1-\nicefrac{1}{p}-\delta}\left([0,T_0];L_p((0,1))\right)
	\end{equation*}
	with operator norm depending on $T_0$. Furthermore, Theorem~\ref{embeddingBUC} gives
	\begin{equation*}
	W_p^{1,2}\left((0,T_0)\times(0,1)\right)\hookrightarrow C\left([0,T_0];W_p^{2-\nicefrac{2}{p}}((0,1))\right)\,.
	\end{equation*}
	The results in~\cite{Triebel} yield that the real interpolation space satisfies
	\begin{equation*}
	W_p^{\theta(2-\nicefrac{2}{p})}((0,1))=\left(L_p((0,1)),W_p^{2-\nicefrac{2}{p}}((0,1))\right)_{\theta,p}
	\end{equation*}
	with equivalent norms. In particular, for all $f\in W_p^{\theta(2-\nicefrac{2}{p})}((0,1))$ there holds the estimate
	\begin{equation*}
	\left\lVert f\right\rVert_{W_p^{\theta(2-\nicefrac{2}{p})}((0,1))}\leq C\left\lVert f\right\rVert^{1-\theta}_{L_p((0,1))}\left\lVert f\right\rVert^\theta_{W_p^{2-\nicefrac{2}{p}}((0,1))}\,.
	\end{equation*}
	A direct computation using the above estimate shows that for all $\alpha\in(0,1)$,
	\begin{equation*}
	 C\left([0,T_0];W_p^{2-\nicefrac{2}{p}}((0,1))\right)\cap C^\alpha\left([0,T_0];L_p((0,1))\right)\hookrightarrow C^{(1-\theta)\alpha}\left([0,T_0];W_p^{\theta(2-\nicefrac{2}{p})}((0,1))\right)
	\end{equation*}
	which yields for all $\delta\in\left(0,1-\nicefrac{1}{p}\right)$ the continuous embedding
	\begin{equation*}
	W_p^{1,2}\left((0,T_0)\times(0,1)\right)\hookrightarrow C^{(1-\theta)(1-\nicefrac{1}{p}-\delta)}\left([0,T_0];W_p^{\theta(2-\nicefrac{2}{p})}((0,1))\right)\,.
	\end{equation*}
	Due to $\theta\left(2-\nicefrac{2}{p}\right)-\frac{1}{p}>1$ the Sobolev Embedding Theorem yields
	\begin{equation*}
	W_p^{1,2}\left((0,T_0)\times(0,1)\right)\hookrightarrow C^{(1-\theta)(1-\nicefrac{1}{p}-\delta)}\left([0,T_0];C^1([0,1])\right)\,.
	\end{equation*}
	The claim now follows using the extension operator $\boldsymbol{E}$ constructed in Lemma~\ref{extensionE_T} with similar arguments as in the proof of Theorem~\ref{embeddingBUCimeindependent}.
\end{proof}

\subsection{Motion by curvature of networks}

Let $n\in\mathbb{N}, n\geq 2$.
Consider a curve $\sigma:[0,1]\to\mathbb{R}^n$ of class $C^1$.
A curve is said to be \textit{regular} if $\vert \sigma_x(x) \vert\neq 0$ for every $x\in[0,1]$.
Let us denote with $s$ the arclength parameter. We remind that 
$\partial_s=\frac{\partial_x}{\vert \sigma_x\vert}$.
If a curve $\sigma$ is of class $C^1$ and regular, its
unit tangent vector is given by
${\tau}=\sigma_s=\frac{\sigma_{x}}{\left|\sigma_{x}\right|}$.
The \textit{curvature vector} of a regular $C^2$--curve $\sigma$ is defined by
 \begin{equation*}
\boldsymbol{\kappa}:=\sigma_{ss}=\tau_{s}=\frac{\sigma_{xx}}{\left|\sigma_{x}\right|^{2}} 
-\frac{\left\langle \sigma_{xx},\sigma_{x}\right\rangle \sigma_{x}}{\left|\sigma_{x}\right|^{4}}\,.
 \end{equation*}
The \textit{curvature} is given by $\kappa=\vert \tau_s\vert$.

%
%

\begin{defn}
A 
\emph{network} $\mathcal{N}$ is a connected set in 
$\mathbb{R}^n$
consisting 
of a finite union of  regular curves $\mathcal{N}^i$ 
that meet at their endpoints in junctions.
Each curve $\mathcal{N}^i$ admits a regular
$C^1$--parametrisation, namely a map 
$\gamma^i:[0,1]\to\mathbb{R}^n$ 
of class $C^1$ with $\vert\gamma^i_x\vert\neq 0$ on $[0,1]$ 
and $\gamma^i\left([0,1]\right)=\mathcal{N}^i$.
\end{defn}


Although a network is a \textit{set} by definition, we will mainly deal with its parametrisations.
It is then natural to speak about the regularity of these maps.

\begin{defn}
Let $k\in\mathbb{N}$, $k\geq 2$, and $1\leq p\leq\infty$ with $p>\frac{1}{k-1}$. A network $\mathcal{N}$ is of class $C^k$ (or $W_p^{k}$, respectively) if it admits
a regular parametrisation 
of class $C^k$ (or $W_p^{k}$, respectively).
\end{defn}

In this paper we restrict to the class of \textit{regular networks}. 

\begin{defn}
A network is called \textit{regular} if
its curves meet at triple junctions  forming equal angles.
\end{defn}

Notice that this notion is geometric in the sense that it does not depend
on the choice of the parametrisations $\sigma^i$ of the curves of the network
$\mathcal{N}$.

We define now the \textit{motion by curvature} of regular networks:
a time dependent family of regular networks evolves with normal velocity
equal to the curvature vector at any point and any time, namely
\begin{equation*}
V^i=\boldsymbol{\kappa}^i\,.
\end{equation*}
To be more precise,  given a time dependent family of curves 
$\gamma^i$, we denote by $\boldsymbol{P}^i:\mathbb{R}^n\to\mathbb{R}^n$ the projection
onto the normal space to $\gamma^i$, namely
$\boldsymbol{P}^i:=\mathrm{Id}-\gamma_s\otimes\gamma_s$. The motion equation reads as 
\begin{equation*}
\boldsymbol{P}^i\gamma^i=\boldsymbol{\kappa}^i\,.
\end{equation*}

For the sake of presentation we restrict to the motion by curvature of a \emph{Triod}.

\begin{defn}\label{Triod}
A {\em Triod} $\mathbb{T}=\bigcup_{i=1}^{3}\sigma{}^{i}([0,1])$  is a network 
composed of three regular $C^1$--curves
$\sigma^{i}:\left[0,1\right]\to\mathbb{R}^n$ that 
intersect each other at the triple junction
$O:=\sigma^1(0)=\sigma^2(0)=\sigma^3(0)$.
The other three endpoints of the curves $\sigma^i(1)$ with $i\in\{1,2,3\}$ 
coincide with three
points $P^{i}:=\sigma^{i}\left(1\right)\in\mathbb{R}^n$. The Triod is called \textit{regular} if it is a regular network.
\end{defn}

\begin{figure}[H]
\begin{center}
\begin{tikzpicture}[scale=0.9]
\draw[shift={(-2,0)}]
(-3.73,0) node[left]{$P^1$}
to[out= 50,in=180, looseness=1] (-2,0)
to[out= 60,in=180, looseness=1.5] (-0.45,1.55)
(-2,0)
to[out= -60,in=180, looseness=0.9] (-0.75,-1.75);
\path[font=\large,shift={(-2,0)}]
(-3,0.8) node[below] {$\sigma^1$}
(-1.5,1) node[right] {$\sigma^3$}
(-0.8,-1)[left] node{$\sigma^2$}
(-2.2,0) node[below] {$O$}
(-0.21,1.35)node[above]{$\,\,\,\,\,\, P^3$}
(-0.55,-1.65) node[below] {$\,\,\,\, P^2$};
\end{tikzpicture}
\end{center}
\begin{caption}{A regular Triod in $\mathbb{R}^2$.}
\end{caption}
\end{figure}

\begin{defn}[Geometrically admissible initial Triod]\label{geomadm}
A Triod
$\mathbb{T}_0$ is a \textit{geometrically admissible initial datum}
for the motion  by curvature
if it is regular
and  each of its 
curves can be parametrised by a regular curve 
$\sigma^i\in W^{2-2/p}_p([0,1],\mathbb{R}^n)$ with $p\in(3,\infty)$. 
\end{defn}
\begin{rem}
	For $p\in(3,\infty)$ the Sobolev Embedding Theorem~\cite[Theorem 4.6.1.(e)]{Triebel} implies
	\begin{equation*}
	W_p^{2-\nicefrac{2}{p}}\left((0,1);\mathbb{R}^n\right)\hookrightarrow C^{1+\alpha}\left([0,1];\mathbb{R}^n\right)
	\end{equation*}
	for $\alpha\in\left(0,1-\nicefrac{3}{p}\right)$. In particular, any admissible initial network is of class $C^1$ and the angle condition at the boundary is well-defined.
\end{rem}

\begin{defn}[Solutions to the motion by curvature]\label{geosolution}
Let $p\in (3,\infty)$ and $T>0$.
Let $\mathbb{T}_0$ be a geometrically admissible initial Triod 
with endpoints $P^1$, $P^2$, $P^3$.
A time dependent family of Triods $\left(\mathbb{T}(t)\right)$ is a
\textit{solution to the motion by curvature in $[0,T]$ 
with initial datum $\mathbb{T}_0$ }
if there exists a collection of time dependent parametrisations
\begin{equation*}
\gamma^i_n\in  W^1_p(I_n;L_p((0,1);\mathbb{R}^n))\cap 
L_p(I_n;W^2_p((0,1);\mathbb{R}^n))\,,
\end{equation*}
with $n\in\{0,\dots, N\}$ for some $N\in\mathbb{N}$, 
$I_n:=(a_n,b_n)\subset \mathbb{R}$, $a_n\leq a_{n+1}$, $b_n\leq b_{n+1}$, $a_n<b_n$
and $\bigcup_n (a_n,b_n)=(0,T)$ such that for all $n\in\{0,\dots,N\}$ 
and $t\in I_n$, $\gamma_n(t)=\left(\gamma^1(t),\gamma^2(t),\gamma^3(t)\right)$ 
is a regular parametrisation of $\mathbb{T}(t)$.
Moreover, each $\gamma_n$ needs to satisfy the following system:
\begin{equation}\label{systemtriod}
\begin{cases}
\begin{array}{lll}
\boldsymbol{P}^i\gamma^i_t(t,x)=\boldsymbol{\kappa}^i(t,x)
\quad\quad&\text{ motion by curvature,}\\
\gamma^i(t,1)=P^i\quad
&\text{ fixed endpoints,}\\
\gamma^{1}(t,0)=\gamma^{2}(t,0)=\gamma^{3}(t,0)  &\text{ concurrency condition,}\\
\sum_{i=1}^3\tau^{i}(t,0)=0\quad
&\text{ angle condition,}
\end{array}
\end{cases}
\end{equation}
for almost every $t\in  I_n,x\in\left(0,1\right)$ and for $i\in\{1,2,3\}$. 
Finally, we ask that $\gamma_n(0,[0,1])=\mathbb{T}_0$ whenever $a_n=0$. 
\end{defn}

%

\begin{rem}
In the motion by curvature equation
only the normal component of the velocity $\gamma^i_t$ is prescribed.
This does not mean that there is no 
tangential motion. Indeed, a non--trivial tangential velocity is generally needed
to allow for motion of the triple junction.
\end{rem}

\begin{rem}
We are interested in finding a time--dependent family of networks $\left(\mathcal{N}(t)\right)$ solving the motion by curvature. Our notion of solution
allows the network to be parametrised by different sets of functions
in different (but overlapping) time intervals.
Namely a solution can be parametrised by 
$\gamma=(\gamma^1,\gamma^2,\gamma^3)$ 
with $\gamma^i:(a_0,b_0)\times [0,1]\to\mathbb{R}^n$
and $\eta=(\eta^1,\eta^2,\eta^3)$ with $\eta^i:(a_1,b_1)\times [0,1]\to\mathbb{R}^n$
if $a_0\leq a_1<b_0\leq b_1$ and $\gamma^i((a_1,b_0)\times [0,1])=\eta^i((a_1,b_0)\times [0,1])$.
Requiring that the family of networks $\left(\mathcal{N}(t)\right)$
is parametrised by \emph{one map} 
$\gamma(t)=(\gamma^1(t),\gamma^2(t),\gamma^3(t))$
in the whole time interval of existence $[0,T]$
as in~\cite{mannovplusch}
gives a slightly stronger definition of the  
motion by curvature in comparison to Definition~\ref{geosolution}. 
This difference does not affect the proof of the short time existence result, 
but in principle using 
our definition the \emph{maximal} time interval of existence
could be longer.
\end{rem}

The first step to find solutions to the motion by curvature
is to turn system~\eqref{systemtriod} 
into a system of quasilinear parabolic PDEs 
by choosing a suitable tangential velocity $T$.
We choose $T$ such that
\begin{equation}
\gamma^i_{t}(t,x)=\boldsymbol{P}^i\gamma^i_{t}(t,x)
+\left\langle \gamma^i_{t}(t,x)\,,\tau^i(t,x)\right\rangle\tau^i(t,x)
=\boldsymbol{\kappa}^i(t,x)+T^i(t,x)\tau^i(t,x)
=\frac{\gamma^i_{xx}(t,x)}{\vert \gamma^i_x(t,x)\vert^2}\,.
\end{equation}
Since the expression of the curvature reads as
\begin{equation*}
\boldsymbol{\kappa}^i(t,x)
=\frac{\gamma^i_{xx}(t,x)}{\vert \gamma^i_x(t,x)\vert^2}-\left\langle\frac{\gamma^i_{xx}(t,x)}{\vert\gamma^i_x(t,x)\vert^2}\,,\tau^i(t,x)
\right\rangle\tau^i(t,x)
\end{equation*}
we choose 
\begin{equation*}
T^i(t,x)=\left\langle\frac{\gamma^i_{xx}(t,x)}{\vert\gamma^i_x(t,x)\vert^2}\,,\tau^i(t,x)
\right\rangle\,.
\end{equation*}

The equation $\gamma^i_t=\frac{\gamma^i_{xx}}{\vert \gamma^i_x\vert^2}$
is called \emph{Special Flow}.

\begin{defn}[Admissible initial parametrisation]
Let $p\in(3,\infty)$.
An \textit{admissible initial parametrisation}
for a  Triod $\mathbb{T}_0$ is a triple
$\sigma=(\sigma^1,\sigma^2,\sigma^3)$
where $\bigcup_i \sigma^i([0,1])=\mathbb{T}_0$,
$\sigma^1(0)=\sigma^2(0)=\sigma^3(0)$ and
$\sum_{i=1}^3\frac{\sigma^i_x(0)}{\vert\sigma^i_{x}(0)\vert}=0$
with $\sigma^i$ regular and of class $W^{2-2/p}_p((0,1),\mathbb{R}^n)$.
\end{defn}
Notice that it follows by the very definition that a geometrically admissible Triod admits an
admissible parametrisation. 

\begin{defn}[Solution of the Special Flow]\label{analyticproblem}
Let $T>0$ and $p\in (3,\infty)$.
Consider an admissible initial  parametrisation
$\sigma=(\sigma^1,\sigma^2,\sigma^3)$
for a Triod $\mathbb{T}_0$
in $\mathbb{R}^{n}$ with $\sigma^i(1)=P^i\in\mathbb{R}^n$.  Then
we say that $\gamma=(\gamma^1,\gamma^2,\gamma^3)$
is a \textit{solution of the Special Flow in the time interval $[0,T]$ with initial datum $\sigma$}
if 
\begin{equation*}
\gamma=\left(\gamma^1,\gamma^2,\gamma^3\right)\in \boldsymbol{E}_T=W^1_p((0,T);L_p((0,1);(\mathbb{R}^n)^3))\cap L_p((0,T);W^2_p((0,1);(\mathbb{R}^n)^3))\,,
\end{equation*}
$\vert\gamma^i_x(t,x)\vert\neq 0$ for all $(t,x)\in[0,T]\times[0,1]$ and the following system is satisfied
for $i\in\{1,2,3\}$ and for almost every $x\in(0,1)$, $t\in(0,T)$:  
\begin{equation}\label{problema}
\begin{cases}
\begin{array}{lll}
\gamma^i_t(t,x)=\frac{\gamma_{xx}^{i}\left(t,x\right)}{\left|\gamma_{x}^{i}\left(t,x\right)\right|^{2}}
\qquad &\text{Special Flow,}\\
\gamma^i(t,1)=P^i
\qquad &\text{fixed endpoints,}\\
\gamma^1(t,0)=\gamma^2(t,0)=\gamma^3(t,0)\qquad
&\text{concurrency condition,}\\
\sum_{i=1}^{3}\frac{\gamma_{x}^{i}\left(t,0\right)}{\left|\gamma_{x}^{i}\left(t,0\right)\right|}=0
\qquad &\text{angle condition,}\\
\gamma^i(0,x)=\sigma^i(x)\qquad &\text{initial datum.}\\
\end{array}
\end{cases}
\end{equation}
\end{defn}

\begin{rem}
Both in~\cite{Bronsardreitich} and in~\cite{mannovtor} the authors 
define the motion by curvature introducing directly the Special Flow.
This is not restrictive to get a short time existence result because 
a solution of the Special Flow as defined in Definition~\ref{analyticproblem} 
induces a solution of the motion by curvature in the sense of Definition~\ref{geosolution}
which is shown in Theorem~\ref{existencegeopro} below.
However, we will see that it is not trivial to deduce \textit{geometric uniqueness}
of solutions to the motion by curvature from uniqueness of solutions to the Special Flow.
\end{rem}

\section{Existence and Uniqueness of the Motion by Curvature}\label{existence and uniquess}

\subsection{Existence and uniqueness of the linearised Special Flow}\label{linearisedcase}

We fix an admissible initial parametrisation $\sigma=(\sigma^1,\sigma^2,\sigma^3)$.
Linearising the main equation of system~\eqref{problema} around the initial datum we obtain: 

\begin{equation}\label{deff}
\gamma^i_{t}(t,x)-\frac{1}{\left|\sigma^i_{x}(x)\right|^{2}}\,\gamma^i_{xx}(t,x)=
\left(\frac{1}{\left|\gamma^i_{x}(t,x)\right|^{2}}-\frac{1}{\left|\sigma^i_x(x)\right|^{2}}\right)
\gamma^i_{xx}(t,x) \,.
\end{equation}

The linearisation of the angle condition in $x=0$ is given by
\begin{equation}\label{defpsi}
-\sum_{i=1}^3 \left(\frac{\gamma^i_x}{\vert \sigma^i_x\vert}
-\frac{\sigma^i_x\left\langle
\gamma^i_x,\sigma^i_x\right\rangle}{\vert \sigma^i_x\vert^3}\right)=
\sum_{i=1}^3 \left(\left(\frac{1}{\vert \gamma^i_x\vert}
 -\frac{1}{\vert\sigma^i_x\vert}\right)\gamma^i_x +
 \frac{\sigma^i_x\left\langle \gamma^i_x,\sigma^i_x\right\rangle}{\vert \sigma^i_x\vert^3}
\right)\,,
\end{equation}
where we have omitted the dependence of the left-hand side on $(t,0)$. 
The concurrency and the fixed endpoints conditions are already linear and affine.
We obtain the following linearised system for a general right hand side $(f,\eta,b,\psi)$.
\begin{equation}\label{linsys}
\begin{cases}
\begin{array}{rll}
\gamma^i_{t}(t,x)-\frac{1}{\left|\sigma^i_{x}(x)\right|^{2}}\,\gamma^i_{xx}(t,x)&=f^i(t,x)\,, &t\in(0,T)\,, x\in(0,1)\,,i\in\{1,2,3\}\,,\\
\gamma(t,1)&=\eta(t)\,, &t\in[0,T]\,,\\
\gamma^1\left(t,0\right)-\gamma^{2}\left(t,0\right)&=0\,, &t\in[0,T]\,,\\
\gamma^2(t,0)-\gamma^{3}\left(t,0\right)&=0\,,  &t\in[0,T]\,, \\
-\sum_{i=1}^3 \left(\frac{\gamma^i_x(t,0)}{\vert \sigma^i_x(0)\vert}
-\frac{\sigma^i_x(0)\left\langle
	\gamma^i_x(t,0),\sigma^i_x(0)\right\rangle}{\vert \sigma^i_x(0)\vert^3}\right)&=b(t)\,, &t\in[0,T]\,,\\
\gamma\left(0,x\right)&=\psi\left(x\right)  \,, &x\in[0,1]\,.
\end{array}
\end{cases}
\end{equation}

\begin{defn}[Linear compatibility conditions]\label{linearcompcond}
Let $p\in (3,\infty)$.
A function $\psi=(\psi^1,\psi^2,\psi^3)$
of class $W^{2-\nicefrac{2}{p}}_p\left((0,1);(\mathbb{R}^n)^3\right)$
satisfies the \textit{linear compatibility conditions }
for system~\eqref{linsys} 
with respect to given functions $\eta\in W_p^{1-\nicefrac{1}{2p}}((0,T);(\mathbb{R}^n)^3)$, $b\in W_p^{\nicefrac{1}{2}-\nicefrac{1}{2p}}((0,T);\mathbb{R}^n)$ if
for $i,j\in\{1,2,3\}$ it holds
$\psi^i(0)=\psi^j(0)$, $\psi^i(1)=\eta^i(0)$ and 
\begin{equation*}
-\sum_{i=1}^3 \left(\frac{\psi^i_x(0)}{\vert \sigma^i_x(0)\vert}
-\frac{\sigma^i_x(0)\left\langle
\psi^i_x(0),\sigma^i_x(0)\right\rangle}{\vert \sigma^i_x(0)\vert^3}\right)=b(0)\,.
\end{equation*}
\end{defn}

We want to show that system~\eqref{linsys}
admits a unique solution $\gamma=(\gamma^1,\gamma^2,\gamma^3)$ in 
$\boldsymbol{E}_T$.
The result follows from the classical theory for linear parabolic systems by
Solonnikov~\cite{solonnikov2} provided that the system is parabolic and 
that the \textit{complementary conditions} hold (see~\cite[p.~11]{solonnikov2}).
Both the parabolicity and the complementary (initial and boundary) conditions have been
proven in~\cite{Bronsardreitich}.
We remark that the complementary conditions at the boundary follow from the \emph{Lopatinskii--Shapiro condition} (see for instance~\cite[pages 11--15]{eidelman2}).

\begin{defn}[Lopatinskii--Shapiro condition]
Let $\lambda\in\mathbb{C}$ with $ \Re(\lambda)>0$ be arbitrary.
The Lopatinskii--Shapiro condition for system~\eqref{linsys}
is satisfied at the triple junction if 
every solution $\varrho=(\varrho^1,\varrho^2,\varrho^3)\in C^2([0,\infty),(\mathbb{C}^2)^3)$ to
\begin{equation}\label{LopatinskiiShapirosystem}
\begin{cases}
\begin{array}{rll}
\lambda \varrho^i(x)-\frac{1}{\vert\sigma^i_x(0)
\vert^2}\varrho^i_{xx}(x)&=0\,, &\; \; x\in[0,\infty)\,, \;
i\in\{1,2,3\}\,,\\
\varrho^{1}(0)-\varrho^{2}(0)&=0\,, & \\
\varrho^{2}(0)-\varrho^{3}(0)&=0\,, & \\
\sum_{i=1}^3\frac{\varrho_{x}^{i}(x)}{\left|\sigma_{x}^{i}(0)\right|} -\frac{\sigma^i_x(0)\left\langle \varrho^i_x(x),\sigma^i_x(0)\right\rangle}{\vert \sigma^i_x(0)\vert^3}
&=0 &
\end{array}
\end{cases}
\end{equation}
which satisfies $\lim_{x\to\infty}\lvert \varrho^i(x)\rvert=0$ is the trivial solution. 

\medskip 

Similarly, the Lopatinskii--Shapiro condition for system~\eqref{linsys}
is satisfied at the fixed
endpoints if every solution $\varrho=(\varrho^1,\varrho^2,\varrho^3)\in C^2([0,\infty),(\mathbb{C}^2)^3)$ to
\begin{equation}\label{LopatinskiiShapiroinone}
\begin{cases}
\begin{array}{rll}
\lambda \varrho^i(x)-\frac{1}{\vert\sigma^i_x(0) \vert^2}\varrho^i_{xx}(x)&=0\,, &\;x\in[0,\infty)\,,\; i\in\{1,2,3\}\,,\\
\varrho^{i}(0)&=0\,, &\;i\in\{1,2,3\}
\end{array}
\end{cases}
\end{equation}
which satisfies $\lim_{x\to\infty}\lvert \varrho^i(x)\rvert=0$ is the trivial solution.
\end{defn}

\begin{lem}\label{LemmaLopatinskisShapiroconditions}
The  Lopatinskii--Shapiro condition is satisfied.
\end{lem}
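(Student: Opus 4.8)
The plan is to verify the Lopatinskii--Shapiro condition separately at the fixed endpoints and at the triple junction, in both cases by explicitly solving the ODE system \eqref{LopatinskiiShapirosystem} (resp. \eqref{LopatinskiiShapiroinone}) and using the decay condition at infinity to pin down the solution. Fix $\lambda\in\mathbb{C}$ with $\Re(\lambda)>0$. For each $i$ the scalar-type equation $\lambda\varrho^i-\frac{1}{|\sigma^i_x(0)|^2}\varrho^i_{xx}=0$ has characteristic roots $\pm\mu_i$ with $\mu_i:=|\sigma^i_x(0)|\sqrt{\lambda}$, where $\sqrt{\lambda}$ denotes the branch with positive real part (this is where $\Re(\lambda)>0$ is used: $\Re(\mu_i)>0$). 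Hence every bounded-at-infinity solution is of the form $\varrho^i(x)=e^{-\mu_i x}v^i$ for a constant vector $v^i\in\mathbb{C}^n$, the $e^{+\mu_i x}$ mode being excluded by $\lim_{x\to\infty}|\varrho^i(x)|=0$.

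For the fixed endpoints, the boundary condition $\varrho^i(0)=0$ immediately forces $v^i=0$ for each $i$, so $\varrho\equiv 0$; this case is trivial. For the triple junction I would substitute $\varrho^i(x)=e^{-\mu_i x}v^i$ into the remaining three conditions. The concurrency conditions give $v^1=v^2=v^3=:v$. The angle (third) condition becomes, after differentiating and evaluating at $x=0$,
\begin{equation*}
-\sum_{i=1}^3\mu_i\left(\frac{v}{|\sigma^i_x(0)|}-\frac{\sigma^i_x(0)\langle v,\sigma^i_x(0)\rangle}{|\sigma^i_x(0)|^3}\right)=0\,,
\end{equation*}
i.e. $\sum_{i=1}^3\sqrt{\lambda}\,\boldsymbol{P}^i v=0$ where $\boldsymbol{P}^i=\mathrm{Id}-\tau^i\otimes\tau^i$ is the orthogonal projection onto the normal space of the $i$-th curve at the junction (here I use $\mu_i/|\sigma^i_x(0)|=\sqrt{\lambda}$). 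Dividing by $\sqrt{\lambda}\neq 0$, the condition reduces to $\sum_{i=1}^3\boldsymbol{P}^i v=0$. To conclude $v=0$ I would test this identity against $v$ itself: $\sum_{i=1}^3\langle \boldsymbol{P}^i v,v\rangle=\sum_{i=1}^3|\boldsymbol{P}^i v|^2=0$ forces $\boldsymbol{P}^i v=0$ for every $i$, i.e. $v$ is parallel to each $\tau^i$. Since the three unit tangents $\tau^1,\tau^2,\tau^3$ at a regular triple junction are pairwise non-parallel (they form $120$ degree angles, hence span at least a $2$-dimensional space), the only vector parallel to all three is $v=0$. Therefore $\varrho\equiv 0$ and the Lopatinskii--Shapiro condition holds at the triple junction as well.

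The only slightly delicate point is the reduction of the angle condition to the clean form $\sum_i\boldsymbol{P}^i v=0$ and the observation that the resulting quadratic form $v\mapsto\sum_i\langle\boldsymbol{P}^i v,v\rangle$ is a sum of squares, which makes the argument independent of $\lambda$ — this is precisely the shortcut over the computation in \cite{Bronsardreitich}. One must also be careful that the complex inner product is handled correctly (one uses the bilinear pairing coming from the real structure, since $\sigma^i_x(0)\in\mathbb{R}^n$ and $\boldsymbol{P}^i$ is a real symmetric operator, so $\langle\boldsymbol{P}^i v,\bar v\rangle=|\boldsymbol{P}^i v|^2\geq 0$ with the Hermitian inner product); this is a routine check. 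Everything else is elementary ODE theory.
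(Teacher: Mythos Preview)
Your proof is correct, but the route differs from the paper's. You solve the constant--coefficient ODE explicitly, write each decaying solution as $\varrho^i(x)=e^{-\mu_i x}v^i$, and reduce the boundary conditions to a finite--dimensional statement: the concurrency condition forces $v^1=v^2=v^3=:v$, and the angle condition collapses (after dividing by $\sqrt{\lambda}$) to $\sum_i\boldsymbol{P}^i v=0$, which you kill by pairing Hermitianly with $v$ and using that each $\boldsymbol{P}^i$ is a real orthogonal projection. The paper instead uses an energy (integration--by--parts) argument: it multiplies the ODE by $|\sigma^i_x(0)|\boldsymbol{P}^i\overline{\varrho}^i$, integrates over $[0,\infty)$, sums in $i$, and takes the real part to conclude first $\boldsymbol{P}^i\varrho^i\equiv 0$, then repeats with the tangential test function to finish. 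Your approach is more elementary and makes the dependence on $\lambda$ transparently disappear; the paper's energy method avoids writing down the explicit fundamental solution and so would transfer more readily to settings with variable coefficients or higher order. Both exploit exactly the same structural fact in the end, namely that $\sum_i\boldsymbol{P}^i$ is a sum of projections and the kernels $\mathrm{span}\,\tau^i$ intersect trivially.
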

\begin{proof}
We first check the condition at the triple junction.
Let $\varrho$ be a solution to~\eqref{LopatinskiiShapirosystem}
satisfying $\lim_{x\to\infty}\lvert \varrho^i(x)\rvert=0$.
We multiply 
$\lambda \varrho^i(x)-\frac{1}{\vert\sigma^i_x(0) \vert^2}\varrho^i_{xx}(x)=0$
by $\vert \sigma^i_x(0)\vert \boldsymbol{P}^i\overline{\varrho}^i(x)$ 
(where with $ \boldsymbol{P}^i$ we mean here $\mathrm{Id}-\sigma^i_s(0)\otimes\sigma^i_s(0)$), 
then we integrate and sum.
Using the two conditions at the boundary we get
\begin{align}\label{testedmot}
0&=\sum_{i=1}^3\int_0^\infty \lambda\vert\sigma^i_x(0)\vert\vert \boldsymbol{P}^i(\varrho^i(x))\vert^2-\frac{1}{\vert\sigma^i_x(0)\vert}\left\langle \varrho^i_{xx}(x),  \boldsymbol{P}^i\overline{\varrho}^i(x)\right\rangle \,\mathrm{d}x\\
&=\sum_{i=1}^3\int_0^\infty \lambda\vert\sigma^i_x(0)\vert\vert \boldsymbol{P}^i(\varrho^i(x))\vert^2
+\frac{\vert \boldsymbol{P}^i(\varrho^i_x(x))\vert^2}{\vert\sigma^i_x(0)\vert}\,\mathrm{d}x
-\sum_{i=1}^3 \frac{1}{\vert\sigma^i_x(0)\vert}\left\langle \boldsymbol{P}^i\varrho^i_{x}(0),  \boldsymbol{P}^i\overline{\varrho}^i(0)\right\rangle\\
&=\sum_{i=1}^3\int_0^\infty \lambda\vert\sigma^i_x(0)\vert\vert \boldsymbol{P}^i(\varrho^i(x))\vert^2
+\frac{\vert \boldsymbol{P}^i(\varrho^i_x(x))\vert^2}{\vert\sigma^i_x(0)\vert}\,\mathrm{d}x
-\left\langle\overline{\varrho}^1(0),\sum_{i=1}^3 \boldsymbol{P}^i\left(\frac{\varrho^i_x(0)}{\vert\sigma^i_x(0)\vert}\right)\right\rangle\\
&=\sum_{i=1}^3\int_0^\infty \lambda\vert\sigma^i_x(0)\vert\vert \boldsymbol{P}^i(\varrho^i(x))\vert^2
+\frac{\vert \boldsymbol{P}^i(\varrho^i_x(x))\vert^2}{\vert\sigma^i_x(0)\vert}\,\mathrm{d}x\,.
\end{align}
As a consequence we get that $\boldsymbol{P}^i(\varrho^i(x))=0$ for all $x\in[0,\infty)$ and $i\in\{1,2,3\}$
and in particular $\boldsymbol{P}^i(\varrho^1(0))=0$ for all $i\in\{1,2,3\}$. 
As the orthogonal complements of $\sigma_x^i(0)$ with $i\in\{1,2,3\}$
span all $\mathbb{R}^n$,
we conclude that $\varrho^i(0)=0$ for all $i\in\{1,2,3\}$. 
Repeating the argument and
testing the motion equation by $\vert\sigma^i_x(0)\vert\langle\overline{\varrho}^i(x),\sigma^i_s(0)\rangle\sigma^i_s(0)$
we can conclude that $\varrho^i(x)=0$ for every $x\in[0,\infty)$.
Indeed,
we obtain
\begin{align}\label{testedmot2}
&\sum_{i=1}^3\lambda\vert\sigma^i_x(0)\vert\int_0^\infty \vert \left\langle\varrho^i(x),\sigma^i_s(0)\right\rangle\vert^2\,\mathrm{d}x
+\sum_{i=1}^3\frac{1}{\vert\sigma^i_x(0)\vert}\int_0^\infty \vert \left\langle\varrho^i_x(x),\sigma^i_s(0)\right\rangle\vert^2\,\mathrm{d}x\nonumber\\
+&\sum_{i=1}^3\frac{1}{\vert\sigma^i_x(0)\vert}
\left\langle \overline{\varrho}^i(0),  \sigma^i_s(0)\right\rangle
\left\langle \varrho^i_{x}(0), \sigma^i_s(0)\right\rangle=0\,.
\end{align}
This time the boundary condition vanishes since we get 
$\varrho^i(0)=0$
from the previous step.
Taking again the real part of~\eqref{testedmot2}
we can  
conclude that $\left\langle\varrho^i(x),\sigma^i_s(0)\right\rangle=0$ for all $x\in [0,\infty)$.
Hence
$\varrho^i(x)=0$ for every $x\in[0,\infty)$ as desired.

The condition at the fixed endpoints follows in exactly the same way using 
the boundary condition $\varrho^i(0)=0$.
\end{proof}

Given $T>0$ we introduce the spaces
 \begin{itemize}
\item[] $\mathbb{E}_T:=\left\lbrace\gamma\in \boldsymbol{E}_T\,,
 \gamma^1(t,0)=\gamma^2(t,0)=\gamma^3(t,0)
\text{
 for} \;
i\in\{1,2,3\},t\in[0,T] 
\,
 \right\rbrace$,
\item[] $\mathbb{F}_T:=
\left\lbrace(f,\eta,0,b,\psi)\;\text{with}\,
f\in  L_p((0,T);L_p((0,1);(\mathbb{R}^n)^3)), \,\eta\in W^{1-\nicefrac{1}{2p}}_p((0,T);(\mathbb{R}^n)^3)\,,\right.\\
\left.\qquad\quad\;\, 0\in W^{1-\nicefrac{1}{2p}}_p((0,T);\mathbb{R}^{2n})\,, b\in W_p^{\nicefrac{1}{2}-\nicefrac{1}{2p}}((0,T);\mathbb{R}^n)\,,\,
\psi\in W_p^{2-2/p}((0,1);(\mathbb{R}^n)^3)  \right.\\
	\left.\qquad\quad\;\text{ such that the linear compatibility conditions in Definition~\ref{linearcompcond} hold}\right\rbrace$.
\end{itemize}

\begin{thm}\label{exlin}
Let $p\in(3,\infty)$.
For every $T>0$
system~\eqref{linsys} 
has a unique solution $\gamma\in\mathbb{E}_T$
provided that 
$f\in L_p((0,T);L_p((0,1);(\mathbb{R}^n)^3))$, $\eta\in W^{1-\nicefrac{1}{2p}}_p((0,T);(\mathbb{R}^n)^3)$ 
$b\in W_p^{\nicefrac{1}{2}-\nicefrac{1}{2p}}((0,T);\mathbb{R}^n)$ and
$\psi\in  W_p^{2-\nicefrac{2}{p}}((0,1);(\mathbb{R}^n)^3)$ 
fulfil the  linear compatibility conditions given in Definition~\ref{linearcompcond}.
Moreover, there exists a constant $C=C(T)>0$ such that the following
estimate holds:
\begin{equation}\label{estimate}
\Vert \gamma\Vert_{\boldsymbol{E}_T} \leq C\left( 
 \Vert f\Vert_{L_p((0,T);L_p((0,1)))}+\Vert\eta\Vert_{W^{1-\nicefrac{1}{2p}}_p((0,T))}+
\Vert b\Vert_{W_p^{\nicefrac{1}{2}-\nicefrac{1}{2p}}((0,T))}
+ \Vert \psi \Vert_{W_p^{2-\nicefrac{2}{p}}((0,1))}\right)\,.
\end{equation} 
\end{thm}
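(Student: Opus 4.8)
The plan is to deduce Theorem~\ref{exlin} from the $L_p$ maximal regularity theory of Solonnikov~\cite{solonnikov2} for linear parabolic initial--boundary value problems. First I would recast \eqref{linsys} in Solonnikov's abstract framework. On each component the interior operator is the scalar parabolic operator $\partial_t-\tfrac{1}{|\sigma^i_x(x)|^2}\partial_{xx}$, whose coefficient $x\mapsto|\sigma^i_x(x)|^{-2}$ is continuous (indeed $C^\alpha$, since $W_p^{2-\nicefrac{2}{p}}\hookrightarrow C^{1+\alpha}$ by Theorem~\ref{embeddingBUC}) and uniformly positive on $[0,1]$ because $\sigma$ is a regular parametrisation; hence the system is parabolic in the Petrovskii sense. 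The boundary operator at $x=1$ is the zeroth-order Dirichlet trace, and at $x=0$ one has the two zeroth-order concurrency conditions together with the first-order linearised angle condition; assigning orders $0,0,1$ to these, Solonnikov's theory prescribes the data to have temporal regularity $W_p^{(2-m_j-\nicefrac1p)/2}((0,T))$ with $m_j$ the order of the $j$-th condition, that is $W_p^{1-\nicefrac{1}{2p}}((0,T))$ for $\eta$ and the (vanishing) concurrency datum and $W_p^{\nicefrac12-\nicefrac{1}{2p}}((0,T))$ for $b$, together with $\psi\in W_p^{2-\nicefrac2p}((0,1))$ for the initial value. These are exactly the spaces appearing in the definition of $\mathbb{F}_T$ and are consistent with Lemma~\ref{derivativesol} and Proposition~\ref{identification}.

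Next I would check the two structural hypotheses. Parabolicity of the system was already verified in~\cite{Bronsardreitich}; the complementary (Lopatinskii--Shapiro) condition at the boundary --- both at the triple junction and at the fixed endpoints --- is precisely the content of Lemma~\ref{LemmaLopatinskisShapiroconditions}, and the complementary condition on the initial line is automatic since the principal part is scalar. It then remains to identify Solonnikov's compatibility requirements for data admitting a solution of regularity $W_p^{1,2}$: for the zeroth-order boundary conditions these are $\psi^i(0)=\psi^j(0)$ and $\psi^i(1)=\eta^i(0)$, and for the first-order angle condition it is the identity $-\sum_{i=1}^3\big(\psi^i_x(0)/|\sigma^i_x(0)|-\sigma^i_x(0)\langle\psi^i_x(0),\sigma^i_x(0)\rangle/|\sigma^i_x(0)|^3\big)=b(0)$, the last trace being well defined precisely because $p>3$ forces $1-\nicefrac2p>\nicefrac1p$. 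These coincide with the linear compatibility conditions of Definition~\ref{linearcompcond}, which hold by assumption.

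With all hypotheses verified, the theory of~\cite{solonnikov2} provides a unique solution $\gamma\in W_p^{1,2}\left((0,T)\times(0,1);(\mathbb{R}^n)^3\right)=\boldsymbol{E}_T$ together with the a priori bound \eqref{estimate}, the constant $C=C(T)$ being admissible on the bounded interval $(0,T)$. Since the concurrency conditions $\gamma^1(t,0)=\gamma^2(t,0)=\gamma^3(t,0)$ are part of \eqref{linsys}, this solution automatically belongs to $\mathbb{E}_T$, and uniqueness in $\boldsymbol{E}_T$ yields a fortiori uniqueness in $\mathbb{E}_T$. I expect the only genuinely delicate point --- the reason this is more than a bare citation --- to be the bookkeeping that matches \eqref{linsys} to Solonnikov's hypotheses: fixing the correct weights for the mixed-order boundary system at $x=0$, checking the Douglis--Nirenberg-type order constraints on the boundary operators, and confirming that Solonnikov's abstract notion of compatibility reduces, for our second-order system with scalar principal part and for $p\in(3,\infty)$, to the three conditions listed in Definition~\ref{linearcompcond}.
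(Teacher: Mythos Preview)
Your proposal is correct and follows exactly the paper's approach: the paper's own proof is the single sentence ``This follows from~\cite[Theorem 5.4]{solonnikov2}'', and you have simply spelled out the hypothesis-checking (parabolicity, Lopatinskii--Shapiro via Lemma~\ref{LemmaLopatinskisShapiroconditions}, matching data spaces and compatibility conditions) that justifies invoking that result.
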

\begin{proof}
	This follows from~\cite[Theorem 5.4]{solonnikov2}.
\end{proof}

Theorem~\ref{exlin} implies in particular that
the linear operator 
$L_{T}:\mathbb{E}_T\to 
\mathbb{F}_T$ defined by
\begin{equation*}
L_{T}(\gamma)=
\begin{pmatrix}
\left(\gamma^i_t-\frac{\gamma^i_{xx}}{\vert\sigma^i_x\vert^2}\right)_{i\in\{1,2,3\}}\\
\gamma_{|x=1}\\
\left(\gamma^1_{|x=0}-\gamma^2_{|x=0},\gamma^2_{|x=0}-\gamma^3_{|x=0}\right) \\
-\sum_{i=1}^3 \left(\frac{\gamma^i_x}{\vert \sigma^i_x\vert}
-\frac{\sigma^i_x\left\langle
	\gamma^i_x,\sigma^i_x\right\rangle}{\vert \sigma^i_x\vert^3}\right)_{|x=0}\\
\gamma_{| t=0}
\end{pmatrix}
\end{equation*}
is a continuous isomorphism.

Corollary~\ref{equivalent norm $E_T$} and Lemma~\ref{equivnorm2} imply that for every positive $T$ the spaces $\mathbb{E}_T$ and $\mathbb{F}_T$ endowed with the norms
\begin{equation*}
\vertiii{\gamma}_{\boldsymbol{E}_T}:=\vertiii{\gamma}_{W_p^{1,2}\left((0,T)\times(0,1);(\mathbb{R}^n)^3\right)}=\left\lVert\gamma\right\rVert_{W_p^{1,2}\left((0,T)\times(0,1);(\mathbb{R}^n)^3\right)}+\left\lVert\gamma(0)\right\rVert_{W_p^{2-\nicefrac{2}{p}}\left((0,1);(\mathbb{R}^n)^3\right)}
\end{equation*}
and
\begin{align*}
\vertiii{(f,\eta,0,b,\psi)}_{\mathbb{F}_T}:=&\left\lVert f\right\rVert_{L_p\left((0,T);L_p((0,1);(\mathbb{R}^n)^3)\right)}+\vertiii{\eta}_{W_p^{1-\nicefrac{1}{2p}}\left((0,T);(\mathbb{R}^n)^3\right)}\\
&+\vertiii{b}_{W_p^{\nicefrac{1}{2}-\nicefrac{1}{2p}}\left((0,T);\mathbb{R}^n\right)}+\left\lVert\psi\right\rVert_{W_p^{2-\nicefrac{2}{p}}\left((0,1);(\mathbb{R}^n)^3\right)}\,,
\end{align*}
respectively, are Banach spaces. Given a linear operator $A:\mathbb{F}_T\to\mathbb{E}_T$ we let
\begin{equation*}
\vertiii{A}_{\mathcal{L}\left(\mathbb{F}_T,\mathbb{E}_T\right)}:=\sup\{\vertiii{A(f,\eta,0,b,\psi)}_{\boldsymbol{E}_T}:(f,\eta,0,b,\psi)\in\mathbb{F}_T,\vertiii{(f,\eta,0,b,\psi)}_{\mathbb{F}_T}\leq 1\}\,.
\end{equation*}

\begin{lem}\label{Luniformlybounded}
	Let $p\in (3,\infty)$. For all $T_0>0$ there exists a constant $c(T_0,p)$ such that
	\begin{equation*}
	\sup_{T\in (0,T_0]}\vertiii{L_T^{-1}}_{\mathcal{L}(\mathbb{F}_T,\mathbb{E}_T)}\leq c(T_0,p)\,.
	\end{equation*}
\end{lem}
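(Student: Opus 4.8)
The plan is to reduce the uniform bound on $\vertiii{L_T^{-1}}$ to the case of a fixed large time interval $(0,T_0)$ by means of the extension operators constructed above. Fix $T_0>0$ and let $T\in(0,T_0]$ be arbitrary. Given data $(f,\eta,0,b,\psi)\in\mathbb{F}_T$, the idea is to extend each datum to the interval $(0,T_0)$ in such a way that (i) the extended tuple still satisfies the linear compatibility conditions of Definition~\ref{linearcompcond}, and (ii) the norm of the extension is controlled by $\vertiii{(f,\eta,0,b,\psi)}_{\mathbb{F}_T}$ with a constant depending only on $T_0$ and $p$. Concretely, I would extend $\eta$ and $b$ using the operator $E$ from Lemma~\ref{extensionboundarydata} (applied componentwise), extend $\psi$ trivially since it is already defined on all of $(0,1)$, and extend $f$ by zero outside $(0,T)$ — note that a zero extension of $f$ does not affect compatibility since the compatibility conditions only involve traces at $t=0$, and $E$ preserves the value at $t=0$. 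Thus the extended tuple $(\tilde f,\tilde\eta,0,\tilde b,\psi)$ lies in $\mathbb{F}_{T_0}$ and
\begin{equation*}
\vertiii{(\tilde f,\tilde\eta,0,\tilde b,\psi)}_{\mathbb{F}_{T_0}}\leq C(T_0,p)\,\vertiii{(f,\eta,0,b,\psi)}_{\mathbb{F}_T}\,.
\end{equation*}

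Next, apply $L_{T_0}^{-1}$ to this extended tuple; by Theorem~\ref{exlin} (together with Corollary~\ref{equivalent norm $E_T$}) this is a bounded operator on the fixed time interval $(0,T_0)$, so
\begin{equation*}
\vertiii{L_{T_0}^{-1}(\tilde f,\tilde\eta,0,\tilde b,\psi)}_{\boldsymbol{E}_{T_0}}\leq \vertiii{L_{T_0}^{-1}}_{\mathcal{L}(\mathbb{F}_{T_0},\mathbb{E}_{T_0})}\,\vertiii{(\tilde f,\tilde\eta,0,\tilde b,\psi)}_{\mathbb{F}_{T_0}}\,.
\end{equation*}
Then restrict the resulting solution $\gamma_{T_0}:=L_{T_0}^{-1}(\tilde f,\tilde\eta,0,\tilde b,\psi)$ to $(0,T)$. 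Because the differential equation, boundary conditions and initial condition of system~\eqref{linsys} are local in time and the data agree on $(0,T)$, the restriction $\gamma_{T_0}|_{(0,T)}$ solves system~\eqref{linsys} on $(0,T)$ with data $(f,\eta,0,b,\psi)$; by the uniqueness part of Theorem~\ref{exlin} it must equal $L_T^{-1}(f,\eta,0,b,\psi)$. Finally, restriction from $(0,T_0)$ to $(0,T)$ does not increase the $\boldsymbol{E}_T$-norm, and since the value at $t=0$ is unchanged it also does not increase the $\vertiii{\cdot}_{\boldsymbol{E}_T}$-norm. Chaining the three estimates gives
\begin{equation*}
\vertiii{L_T^{-1}(f,\eta,0,b,\psi)}_{\boldsymbol{E}_T}\leq \vertiii{L_{T_0}^{-1}}_{\mathcal{L}(\mathbb{F}_{T_0},\mathbb{E}_{T_0})}\,C(T_0,p)\,\vertiii{(f,\eta,0,b,\psi)}_{\mathbb{F}_T}\,,
\end{equation*}
and taking the supremum over admissible data and over $T\in(0,T_0]$ yields the claim with $c(T_0,p):=C(T_0,p)\,\vertiii{L_{T_0}^{-1}}_{\mathcal{L}(\mathbb{F}_{T_0},\mathbb{E}_{T_0})}$.

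The main obstacle is the bookkeeping around the compatibility conditions: one must make sure that the chosen extensions land in $\mathbb{F}_{T_0}$ and not merely in the ambient product space, i.e.\ that extending $\eta$, $b$ by $E$ and $f$ by zero while leaving $\psi$ fixed is consistent with $\psi^i(0)=\psi^j(0)$, $\psi^i(1)=\tilde\eta^i(0)$ and the linearised angle identity at $t=0$. This works precisely because $E$ is an extension operator — $(E\eta)|_{(0,T)}=\eta$ and in particular $(E\eta)(0)=\eta(0)$, likewise for $b$ — so all traces at $t=0$ are unchanged and the compatibility relations, which only see $t=0$, are automatically inherited. The use of the $\vertiii{\cdot}$-norms (rather than the plain ones) from Corollary~\ref{equivalent norm $E_T$} and Lemma~\ref{equivnorm2} is what makes the extension constants $T$-independent; this is the reason those equivalent norms were introduced. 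Everything else is a routine concatenation of bounded linear maps.
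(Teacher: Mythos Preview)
Your argument is correct and follows essentially the same route as the paper: extend $f$ by zero and $\eta,b$ via the operator $E$ of Lemma~\ref{extensionboundarydata} to obtain data in $\mathbb{F}_{T_0}$, solve on the fixed interval $(0,T_0)$, and then restrict, using uniqueness in Theorem~\ref{exlin} to identify the restriction with $L_T^{-1}(f,\eta,0,b,\psi)$. Your explicit discussion of why the compatibility conditions survive the extension and why the $\vertiii{\cdot}$-norms are needed is a welcome clarification of points the paper leaves implicit.
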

\begin{proof}
	Let $T\in (0,T_0]$ be arbitrary, $\left(f,\eta,0,b,\psi\right)\in\mathbb{F}_T$ and $E_{T_0}b:=\left(Eb\right)_{|(0,T_0)}$, $E_{T_0}\eta:=\left(E\eta\right)_{|(0,T_0)}$ where $E$ is the extension operator defined in Lemma~\ref{extensionboundarydata}. Extending $f$ by $0$ to $E_{T_0}f\in L_p\left((0,T_0);L_p\left((0,1);(\mathbb{R}^n)^3\right)\right)$ we observe that $\left(E_{T_0}f,E_{T_0}\eta,0,E_{T_0}b,\psi\right)$ lies in $\mathbb{F}_{T_0}$. As $L_T$ and $L_{T_0}$ are isomorphisms, there exist unique $\gamma\in\mathbb{E}_T$ and $\widetilde{\gamma}\in\mathbb{E}_{T_0}$ such that $L_T\gamma=(f,\eta,0,b,\psi)$ and $L_{T_0}\widetilde{\gamma}=\left(E_{T_0}f,E_{T_0}\eta,0,E_{T_0}b,\psi\right)$ satisfying
	\begin{equation*}
	L_T\gamma=(f,\eta,0,b,\psi)=\left(E_{T_0}f,E_{T_0}\eta,0,E_{T_0}b,\psi\right)_{|(0,T)}=\left(L_{T_0}\widetilde{\gamma}\right)_{|(0,T)}=L_T\left(\widetilde{\gamma}_{|(0,T)}\right)
	\end{equation*}
	and thus $\gamma=\widetilde{\gamma}_{|(0,T)}$. Using Theorem~\ref{exlin}, Lemma~\ref{extensionboundarydata} and the equivalence of norms on $\boldsymbol{E}_{T_0}$ this implies
	\begin{align*}
	&\vertiii{L_T^{-1}\left(f,\eta,0,b,\psi\right)}_{\boldsymbol{E}_T}=\vertiii{\left(L_{T_0}^{-1}\left(E_{T_0}f,E_{T_0}\eta,0,E_{T_0}b,\psi\right)\right)_{|(0,T)}}_{\boldsymbol{E}_T}\\
	&\leq \vertiii{L_{T_0}^{-1}\left(E_{T_0}f,E_{T_0}\eta,0,E_{T_0}b,\psi\right)}_{\boldsymbol{E}_{T_0}}\leq c\left(T_0,p\right)\left\lVert L_{T_0}^{-1}\left(E_{T_0}f,E_{T_0}\eta,0,E_{T_0}b,\psi\right)\right\rVert_{\boldsymbol{E}_{T_0}}\\
	&\leq c\left(T_0,p\right)\left\lVert\left(E_{T_0}f,E_{T_0}\eta,0,E_{T_0}b,\psi\right)\right\rVert_{\mathbb{F}_{T_0}}\leq c\left(T_0,p\right)\vertiii{(f,\eta,0,b,\psi)}_{\mathbb{F}_T}\,.
	\end{align*}
\end{proof}

\subsection{Existence and uniqueness of the Special Flow}\label{sectionspecialflow}


Given $M$ positive we introduce the notation
\begin{equation*}
\overline{B_M}:=\left\{\gamma\in\boldsymbol{E}_T:\vertiii{\gamma}_{\boldsymbol{E}_T}\leq M\right\}\,.
\end{equation*}

This section is devoted to the proof of the following:

\begin{thm}\label{existenceanalyticprob}
	Let $p\in(3,\infty)$ and 
	let $\sigma=(\sigma^1,\sigma^2,\sigma^3)$ be an admissible initial parametrisation. 
	There exists a positive radius $M$ and a positive time $T$ such that the system~\eqref{problema}
	has a unique solution $\mathcal{E}\sigma$ in 
	$
	\boldsymbol{E}_T\cap\overline{B_M}\,.
	$
\end{thm}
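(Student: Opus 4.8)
The plan is to reformulate the Special Flow~\eqref{problema} as a fixed point problem for the solution operator $L_T^{-1}$ of the linearised system and to apply the Banach fixed point theorem on the closed ball $\overline{B_M}\subset\boldsymbol{E}_T$, with $M$ and $T$ chosen appropriately. More precisely, writing $\gamma=\sigma+(\gamma-\sigma)$ and using the linearisation~\eqref{deff} and~\eqref{defpsi}, a function $\gamma\in\mathbb{E}_T$ solves~\eqref{problema} if and only if it solves the linear system~\eqref{linsys} with right hand side $(f,\eta,b,\psi)=(f(\gamma),P^i(1)\text{-data},b(\gamma),\sigma)$, where
\begin{equation*}
f^i(\gamma):=\left(\frac{1}{\vert\gamma^i_x\vert^2}-\frac{1}{\vert\sigma^i_x\vert^2}\right)\gamma^i_{xx}\,,\qquad
b(\gamma):=\sum_{i=1}^3\left(\left(\frac{1}{\vert\gamma^i_x\vert}-\frac{1}{\vert\sigma^i_x\vert}\right)\gamma^i_x+\frac{\sigma^i_x\langle\gamma^i_x,\sigma^i_x\rangle}{\vert\sigma^i_x\vert^3}\right)_{|x=0}\,,
\end{equation*}
and $\eta\equiv(P^1,P^2,P^3)$ is constant in time. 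Equivalently, $\gamma=L_T^{-1}\bigl(N_T(\gamma)\bigr)$ where $N_T:\boldsymbol{E}_T\supset\overline{B_M}\to\mathbb{F}_T$, $N_T(\gamma):=\bigl(f(\gamma),\eta,0,b(\gamma),\sigma\bigr)$. Here one must first check that $N_T$ indeed maps into $\mathbb{F}_T$, i.e. that the linear compatibility conditions of Definition~\ref{linearcompcond} hold for $(\,f(\gamma),\eta,b(\gamma),\sigma\,)$: the conditions $\sigma^i(0)=\sigma^j(0)$, $\sigma^i(1)=P^i=\eta^i(0)$ are immediate from admissibility of $\sigma$, and the boundary condition at $t=0$ holds precisely because $\sigma$ is an \emph{admissible initial parametrisation} (the angle relation $\sum_i\sigma^i_x(0)/\vert\sigma^i_x(0)\vert=0$ is exactly what makes $b(\gamma)(0)$ equal the linearised left hand side evaluated at $\sigma$). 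One also needs $f(\gamma)\in L_p$, $b(\gamma)\in W_p^{\nicefrac12-\nicefrac{1}{2p}}$, which follows from the embeddings of Theorem~\ref{embeddingBUC}, the trace results Lemma~\ref{derivativesol} and Proposition~\ref{identification}, and the Banach algebra property Proposition~\ref{Banachalgebra} (note $\gamma\in\boldsymbol{E}_T\hookrightarrow C([0,T];C^{1+\alpha})$, so $\vert\gamma^i_x\vert$ stays bounded away from zero for $T$ small, making $1/\vert\gamma^i_x\vert^2$ a smooth function of $\gamma^i_x$).

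The two estimates to establish are the \emph{self-mapping} and \emph{contraction} properties. For self-mapping: by Lemma~\ref{Luniformlybounded}, $\vertiii{L_T^{-1}}_{\mathcal{L}(\mathbb{F}_T,\mathbb{E}_T)}\le c(T_0,p)$ uniformly for $T\le T_0$, so it suffices to show $\vertiii{N_T(\gamma)}_{\mathbb{F}_T}\le M/c(T_0,p)$ for all $\gamma\in\overline{B_M}$. First fix $M$: take $M:=2\,c(T_0,p)\,\vertiii{L_{T_0}^{-1}(0,\eta,0,0,\sigma)}$ — roughly twice the size of the solution of the linear problem with zero nonlinearity — so that $M$ is controlled purely by the data $\sigma,\eta$. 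Then one shows the nonlinear part $\vertiii{(f(\gamma),0,0,b(\gamma)-b_0,0)}_{\mathbb{F}_T}$, where $b_0$ is the affine piece coming from $\sigma$, is small: the key point is that $f(\gamma)$ and $b(\gamma)-b_0$ vanish to first order in $\gamma-\sigma$ near $\gamma=\sigma$, and $\vertiii{\gamma-\sigma}$ is small in the relevant norms when $T$ is small because $\gamma(0)=\sigma$. Concretely, using Theorem~\ref{uniformcalphac1} one gets $\Vert\gamma-\sigma\Vert_{C([0,T];C^1)}\le C T^{\mu}\vertiii{\gamma}_{\boldsymbol{E}_T}\le CMT^\mu$ for some $\mu=(1-\theta)(1-\nicefrac1p-\delta)>0$; combined with Hölder in time for the $L_p$-norm of $f(\gamma)$ and the equivalent-norm trick for the $W_p^{\nicefrac12-\nicefrac1{2p}}$-norm of $b(\gamma)$ (Lemma~\ref{equivnorm2}, using that the trace at $t=0$ is $b_0$), this produces a bound $\vertiii{N_T(\gamma)-(0,\eta,0,b_0,\sigma)}_{\mathbb{F}_T}\le \omega(T,M)$ with $\omega(T,M)\to0$ as $T\to0$ for fixed $M$. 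Choosing $T$ small enough makes this $\le M/(2c(T_0,p))$, which together with the linear part gives $L_T^{-1}N_T(\gamma)\in\overline{B_M}$. The contraction estimate is analogous: for $\gamma,\widetilde\gamma\in\overline{B_M}$ one writes $f(\gamma)-f(\widetilde\gamma)$ and $b(\gamma)-b(\widetilde\gamma)$ as (smooth coefficient)$\cdot(\gamma-\widetilde\gamma)$-type differences, exploits that the difference $\gamma-\widetilde\gamma$ has \emph{zero} initial trace (both equal $\sigma$ at $t=0$) so the equivalent norms on $\boldsymbol{E}_T,\mathbb{F}_T$ reduce to the plain ones and the uniform embeddings again give a factor $T^\mu$, and concludes $\vertiii{L_T^{-1}N_T(\gamma)-L_T^{-1}N_T(\widetilde\gamma)}_{\boldsymbol{E}_T}\le c(T_0,p)\,\omega'(T,M)\,\vertiii{\gamma-\widetilde\gamma}_{\boldsymbol{E}_T}$ with $\omega'(T,M)<1/(2c(T_0,p))$ for $T$ small. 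Banach's fixed point theorem then yields a unique fixed point in $\overline{B_M}$, which is the unique solution of~\eqref{problema} in $\boldsymbol{E}_T\cap\overline{B_M}$; the condition $\vert\gamma^i_x\vert\neq0$ on $[0,T]\times[0,1]$ holds automatically for $T$ small by continuity since it holds for $\sigma$.

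The main obstacle I expect is the careful bookkeeping of the nonlinear estimates with constants \emph{uniform in $T$} — this is exactly why the paper developed the equivalent norms $\vertiii{\cdot}$ and the uniform embedding Theorems~\ref{embeddingBUCimeindependent} and~\ref{uniformcalphac1}. The subtle points are: (i) estimating $\Vert f(\gamma)\Vert_{L_p((0,T)\times(0,1))}$ requires extracting a positive power of $T$ from the time integration, which works because $\gamma_{xx}\in L_p$ in time and the factor $1/\vert\gamma^i_x\vert^2-1/\vert\sigma^i_x\vert^2$ is small in sup-norm; (ii) estimating $\Vert b(\gamma)\Vert_{W_p^{\nicefrac12-\nicefrac1{2p}}((0,T))}$ is more delicate since this is a fractional Sobolev norm in time of a boundary trace — one uses the Banach algebra property (Proposition~\ref{Banachalgebra}) on the traces $\gamma^i_x(\cdot,0)\in W_p^{\nicefrac12-\nicefrac1{2p}}((0,T))$ together with the fact that, because the trace at $t=0$ vanishes for the relevant differences, the $C^0$-in-time factors carry a $T^\mu$ gain via Theorem~\ref{uniformcalphac1}; and (iii) one must confirm at each step that the perturbed right hand side still lies in $\mathbb{F}_T$ (compatibility), which hinges on the admissibility of $\sigma$ as noted above. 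Once these are in place the fixed point argument is routine.
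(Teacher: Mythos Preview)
Your proposal is correct and follows essentially the same route as the paper: reformulate~\eqref{problema} as the fixed point equation $\gamma=L_T^{-1}N_T(\gamma)$ on $\mathbb{E}_T^\sigma\cap\overline{B_M}$, verify that $N_T$ lands in $\mathbb{F}_T^\sigma$ via the compatibility conditions coming from admissibility of $\sigma$, and then establish the contraction and self-mapping properties using the uniform embeddings (Theorems~\ref{embeddingBUCimeindependent},~\ref{uniformcalphac1}) and the uniform bound on $L_T^{-1}$ (Lemma~\ref{Luniformlybounded}) to extract a factor $T^\mu$. The paper organises things in the opposite order---proving the contraction estimate first (Proposition~\ref{Kcontraction}) and then deducing the self-map property by comparing $K_T(\gamma)$ with $K_T(\mathcal{L}\sigma)$ (Proposition~\ref{selfmapping})---and takes a slightly more elaborate choice of $M$, but the substance is the same.
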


Given an admissible initial parametrisation $\sigma$ and $T>0$ we consider the complete metric spaces
\begin{align*}
\mathbb{E}^\sigma_T&:=\{\gamma\in\mathbb{E}_T\;\text{such that}\;\gamma_{\vert t=0}=\sigma\,\text{ and }\gamma_{|x=1}=\sigma(1)\}\,,\\
\mathbb{F}^\sigma_T&:=\mathbb{F}_T\cap\left( L_p\left((0,T);L_p\left((0,1);(\mathbb{R}^n)^3\right)\right)\times\{\sigma(1)\}\times\{0\}\times W_p^{\nicefrac{1}{2}-\nicefrac{1}{2p}}\left((0,T);\mathbb{R}^n\right)\times\{\sigma\}\right)\,.
\end{align*}

\begin{lem}\label{trace}
	Let $p\in(3,\infty)$, $T>0$ and $\sigma=(\sigma^1,\sigma^2,\sigma^3)$ be an admissible initial parametrisation. Then the space $\mathbb{E}_T^\sigma$ is non-empty.
\end{lem}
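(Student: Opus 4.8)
The plan is to produce a single explicit element of $\mathbb{E}^\sigma_T$ by solving the linearised system~\eqref{linsys} with a convenient right-hand side, exploiting that $L_T$ is an isomorphism by Theorem~\ref{exlin}. Since $\mathbb{E}^\sigma_T$ consists of those $\gamma\in\mathbb{E}_T$ with $\gamma_{|t=0}=\sigma$ and $\gamma_{|x=1}=\sigma(1)$, the natural choice of data is $f=0$, $\eta(t)\equiv\sigma(1)$, vanishing concurrency defect, $b\equiv 0$ and initial value $\psi=\sigma$; one then sets $\bar{\gamma}:=L_T^{-1}\big(0,\sigma(1),0,0,\sigma\big)$. (This $\bar\gamma$ is exactly the reference point around which the ball for the contraction argument of Section~\ref{sectionspecialflow} will be centred, so the choice is not arbitrary.)

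The one step that needs checking is that the tuple $\big(0,\sigma(1),0,0,\sigma\big)$ really lies in $\mathbb{F}_T$, in fact in $\mathbb{F}^\sigma_T$. The regularity requirements are immediate: $0\in L_p((0,T);L_p((0,1);(\mathbb{R}^n)^3))$, the constant map $t\mapsto\sigma(1)$ belongs to $W_p^{1-\nicefrac{1}{2p}}((0,T);(\mathbb{R}^n)^3)$, the constant $0$ belongs to $W_p^{\nicefrac{1}{2}-\nicefrac{1}{2p}}((0,T);\mathbb{R}^n)$, and $\psi=\sigma\in W_p^{2-\nicefrac{2}{p}}((0,1);(\mathbb{R}^n)^3)$ since $\sigma$ is an admissible initial parametrisation. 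The only genuine point is the linear compatibility condition of Definition~\ref{linearcompcond} for $\psi=\sigma$: the identities $\psi^i(0)=\psi^j(0)$ hold because $\sigma$ is an admissible initial parametrisation, $\psi^i(1)=\sigma^i(1)=\eta^i(0)$ is trivial, and for the linearised angle condition one observes that, with $\psi=\sigma$, each summand $\frac{\sigma^i_x(0)}{|\sigma^i_x(0)|}-\frac{\sigma^i_x(0)\langle\sigma^i_x(0),\sigma^i_x(0)\rangle}{|\sigma^i_x(0)|^3}$ is identically zero; hence the prescribed value is $b(0)=0$, consistent with the choice $b\equiv 0$.

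With this verified, Theorem~\ref{exlin} produces a unique $\bar{\gamma}\in\mathbb{E}_T$ with $L_T\bar{\gamma}=\big(0,\sigma(1),0,0,\sigma\big)$, and reading off the components of $L_T$ shows $\bar{\gamma}_{|t=0}=\sigma$, $\bar{\gamma}_{|x=1}=\sigma(1)$ and $\bar\gamma^1_{|x=0}=\bar\gamma^2_{|x=0}=\bar\gamma^3_{|x=0}$, so $\bar{\gamma}\in\mathbb{E}^\sigma_T$ and the space is non-empty. There is no real obstacle here; the only slightly delicate bookkeeping is in the compatibility condition, where one must notice that inserting $\psi=\sigma$ into the linearised angle operator returns exactly zero (independently of the angle condition on $\sigma$), so that $b\equiv 0$ is admissible. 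One could alternatively avoid appealing to Solonnikov's theory a second time and construct $\bar\gamma$ by hand — lifting $\sigma$ through a right inverse of the temporal trace $\boldsymbol{E}_T\to W_p^{2-\nicefrac{2}{p}}((0,1))$ (e.g.\ from maximal regularity) and then correcting the spatial boundary values by a term affine in $x$ that vanishes at $t=0$ — but the route via $L_T^{-1}$ is shorter and dovetails with the subsequent fixed-point setup.
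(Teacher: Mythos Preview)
Your proof is correct and follows exactly the same approach as the paper: choose the data $(0,\sigma(1),0,0,\sigma)$, verify it lies in $\mathbb{F}_T$ (the paper glosses over the compatibility check you spell out), and apply Theorem~\ref{exlin} to obtain an element of $\mathbb{E}_T^\sigma$. Your additional remarks on the role of this element in the contraction argument and on the alternative hand-built construction are accurate but go beyond what the paper records.
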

\begin{proof}
As $\sigma$ is an admissible initial parametrisation, one easily checks that $f\equiv 0$, $\eta\equiv\sigma(1)$, $b\equiv 0$ and $\psi\equiv \sigma$ is an admissible right hand side for system~\eqref{linsys}. In other words, $\left(0,\sigma(1),0,0,\sigma\right)\in\mathbb{F}_T$ and hence Theorem~\ref{exlin} yields the existence of $\varrho\in\mathbb{E}_T$ with $L_T\varrho=\left(0,\sigma(1),0,0,\sigma\right)$. In particular, $\varrho_{|t=0}=\sigma$ and $\varrho_{|x=1}=\sigma(1)$ which gives $\varrho\in\mathbb{E}_T^\sigma$. 
\end{proof}
\begin{lem}\label{curveregular}
	Let $p\in(3,\infty)$ and
	\begin{equation*}
	\boldsymbol{c}:=\frac{1}{2}\min_{i\in\{1,2,3\},x\in[0,1]}\vert\sigma^i_x(x)\vert\,.
	\end{equation*}
Given $T_0>0$ and $M>0$ there exists a time $\widetilde{T}(\boldsymbol{c},M)\in (0,T_0]$ such that for all $\gamma\in \mathbb{E}_{T}^\sigma\cap\overline{B_M}$ with $T\in[0,\widetilde{T}(\boldsymbol{c},M)]$ it holds
\begin{equation*}
\inf_{x\in[0,1],t\in[0,T],i\in\{1,2,3\}}\left\lvert \gamma^i_x(t,x)\right\rvert \geq \boldsymbol{c}\,.
\end{equation*}
In particular, the curves $\gamma^i(t)$ are regular for all $t\in [0,T]$.
\end{lem}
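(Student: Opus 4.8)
The plan is to exploit the uniform embedding of the solution space into $C^0([0,T];C^1([0,1]))$ together with the fact that every $\gamma\in\mathbb{E}_T^\sigma$ has the \emph{fixed} initial trace $\gamma(0)=\sigma$. First I would invoke Theorem~\ref{embeddingBUCimeindependent}: since $\gamma$ and $\sigma$ (viewed as time-independent) both lie in $W_p^{1,2}((0,T)\times(0,1))$ with the same value at $t=0$, the difference $\gamma-\sigma$ has vanishing initial trace, so $\vertiii{\gamma-\sigma}_{\boldsymbol{E}_T}=\lVert\gamma-\sigma\rVert_{\boldsymbol{E}_T}$, and by the time-uniform embedding there is a constant $C=C(T_0,p)$ with
\begin{equation*}
\lVert\gamma-\sigma\rVert_{C^0([0,T];C^1([0,1]))}\leq C(T_0,p)\,\vertiii{\gamma-\sigma}_{\boldsymbol{E}_T}\,.
\end{equation*}
Rather than controlling the $C^0$-norm of $\gamma-\sigma$ directly by $M$ (which need not be small), I would instead use Theorem~\ref{uniformcalphac1}: for a suitable Hölder exponent $\mu:=(1-\theta)(1-\nicefrac1p-\delta)>0$ one has, uniformly in $T\in(0,T_0]$,
\begin{equation*}
\lVert\gamma-\sigma\rVert_{C^\mu([0,T];C^1([0,1]))}\leq C(T_0,p,\theta,\delta)\,\vertiii{\gamma-\sigma}_{\boldsymbol{E}_T}\leq 2C(T_0,p,\theta,\delta)\,M\,,
\end{equation*}
where the last bound uses $\vertiii{\gamma}_{\boldsymbol{E}_T}\leq M$ and $\vertiii{\sigma}_{\boldsymbol{E}_T}\lesssim\lVert\sigma\rVert_{W_p^{2-2/p}}$ (enlarging $M$ if necessary so the latter is absorbed). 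Since $(\gamma-\sigma)(0,\cdot)\equiv 0$, Hölder continuity in time then gives, for every $t\in[0,T]$,
\begin{equation*}
\sup_{x\in[0,1]}\left\lvert\gamma^i_x(t,x)-\sigma^i_x(x)\right\rvert\leq\lVert(\gamma-\sigma)(t)\rVert_{C^1}\leq 2C(T_0,p,\theta,\delta)\,M\,t^\mu\,.
\end{equation*}

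Now I would simply choose $\widetilde{T}=\widetilde{T}(\boldsymbol{c},M)\in(0,T_0]$ small enough that $2C(T_0,p,\theta,\delta)\,M\,\widetilde{T}^{\,\mu}\leq\boldsymbol{c}$; this is possible precisely because $\mu>0$. Then for all $T\in[0,\widetilde{T}]$, all $\gamma\in\mathbb{E}_T^\sigma\cap\overline{B_M}$, all $t\in[0,T]$, $x\in[0,1]$ and $i\in\{1,2,3\}$, the reverse triangle inequality yields
\begin{equation*}
\left\lvert\gamma^i_x(t,x)\right\rvert\geq\left\lvert\sigma^i_x(x)\right\rvert-\left\lvert\gamma^i_x(t,x)-\sigma^i_x(x)\right\rvert\geq 2\boldsymbol{c}-\boldsymbol{c}=\boldsymbol{c}\,,
\end{equation*}
using the definition of $\boldsymbol{c}$ as half the minimum of $\lvert\sigma^i_x\rvert$. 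Taking the infimum over $x,t,i$ gives the claimed bound, and since $\boldsymbol{c}>0$ (as $\sigma$ is a regular parametrisation and $\lvert\sigma^i_x\rvert$ is continuous on the compact set $[0,1]$), each $\gamma^i(t)$ is a regular curve.

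The only genuine subtlety — the part I would be careful about — is the time-uniformity of the embedding constant: an ordinary Sobolev embedding would produce a constant blowing up as $T\to 0$, which would defeat the argument. This is exactly why Theorem~\ref{uniformcalphac1} (with its extension operator $\boldsymbol{E}$ from Lemma~\ref{extensionE_T}) is invoked, and why one must pass through the modified norm $\vertiii{\cdot}$ and use that $\gamma-\sigma$ has zero initial trace; everything else is a routine triangle-inequality estimate once the positive time-power $t^\mu$ is available.
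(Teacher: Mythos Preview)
Your overall strategy --- invoke the time-uniform H\"older embedding of Theorem~\ref{uniformcalphac1}, extract a positive power $t^\mu$ from the vanishing initial trace, and finish with the reverse triangle inequality --- is exactly the paper's approach. But there is a genuine technical gap in how you set it up: you apply Theorem~\ref{uniformcalphac1} to the difference $\gamma-\sigma$, claiming that $\sigma$ (viewed as time-independent) lies in $W_p^{1,2}((0,T)\times(0,1))$. This is not justified. Membership in $\boldsymbol{E}_T$ requires $\sigma\in L_p((0,T);W_p^2((0,1)))$, i.e.\ $\sigma\in W_p^2$, whereas an admissible initial parametrisation is only assumed to lie in $W_p^{2-\nicefrac{2}{p}}$. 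So in general $\sigma\notin\boldsymbol{E}_T$, hence $\gamma-\sigma\notin\boldsymbol{E}_T$, and the embedding cannot be applied to the difference.

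The fix is immediate and is precisely what the paper does: apply Theorem~\ref{uniformcalphac1} to $\gamma$ itself (which \emph{is} in $\boldsymbol{E}_T$), obtaining
\[
\lVert\gamma\rVert_{C^\mu([0,T];C^1([0,1]))}\leq C(T_0,p,\theta,\delta)\,\vertiii{\gamma}_{\boldsymbol{E}_T}\leq C(T_0,p,\theta,\delta)\,M\,,
\]
and then use $\gamma(0)=\sigma$ to write $\gamma(t)-\sigma=\gamma(t)-\gamma(0)$, so that
\[
\lVert\gamma(t)-\sigma\rVert_{C^1}\leq t^\mu\,\lVert\gamma\rVert_{C^\mu([0,T];C^1)}\leq C(T_0,p,\theta,\delta)\,M\,t^\mu\,.
\]
This also removes the awkward ``enlarging $M$ if necessary'' step (which you are not free to do, since $M$ is given in the statement and $\widetilde{T}$ is asserted to depend only on $\boldsymbol{c}$ and $M$). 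The rest of your argument --- choosing $\widetilde{T}$ so that $C M\,\widetilde{T}^\mu\leq\boldsymbol{c}$ and applying the reverse triangle inequality --- then goes through verbatim.
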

\begin{proof}
Let $p\in(3,\infty)$, $\theta\in\left(\frac{1+\nicefrac{1}{p}}{2-\nicefrac{2}{p}},1\right)$ and $\delta\in\left(0,1-\nicefrac{1}{p}\right)$. By Theorem~\ref{uniformcalphac1} there exists a constant $C(T_0,p,\theta,\delta)>0$ such that for all $T\in (0,T_0]$ and all $\gamma\in\mathbb{E}_T^\sigma\cap\overline{B_M}$ with $\alpha:=(1-\theta)(1-\nicefrac{1}{p}-\delta)$ it holds
\begin{equation*}
\lVert\gamma\rVert_{C^\alpha\left([0,T];C^1([0,1];(\mathbb{R}^n)^3)\right)}\leq C(T_0,p,\theta,\delta)\vertiii{\gamma}_{\boldsymbol{E}_{T}}\leq C(T_0,p,\theta,\delta)M\,,
\end{equation*}
which implies in particular for all $t\in [0,T]$,
\begin{equation*}
\lVert \gamma(t)-\sigma\rVert_{C^1([0,1];(\mathbb{R}^n)^3)}\leq T^\alpha C(T_0,p,\theta,\delta)M\,.
\end{equation*}
We let $\widetilde{T}(\boldsymbol{c},M)$ be so small that $\widetilde{T}(\boldsymbol{c},M)^\alpha C(T_0,p,\theta,\delta)M\leq\boldsymbol{c}$. Then it follows for all $\gamma\in \mathbb{E}_T^\sigma$ with $T\in (0,\widetilde{T}(\boldsymbol{c},M))$,
\begin{equation*}
\inf_{t\in[0,T],x\in[0,1]}\vert\gamma^i_x(t,x)\vert\geq \inf_{x\in[0,1]}\vert\sigma^i_x(x)\vert -\sup_{t\in[0,T],x\in[0,1]}\vert\gamma^i_x(t,x)-\gamma^i_x(0,x)\vert \geq \boldsymbol{c}\,.
\end{equation*}
\end{proof}

Let us now define the operator $N_T$ that encodes the non--linearity of our problem.
The map $N_T:\mathbb{E}^\sigma_T \to \mathbb{F}_T^\sigma$ is given by 
$\gamma \mapsto \left(N_T^1(\gamma),\gamma_{|x=1},0,N_T^2(\gamma),\gamma_{|t=0}\right)$ where
the two components $N^1_T,N^2_T$ are defined as
\begin{align*}
N^{1}_T:&
\begin{cases}
\mathbb{E}^\sigma_T &\to L_p((0,T);L_p((0,1);(\mathbb{R}^n)^3))\,,\\
\gamma &\mapsto
f(\gamma)\,,
\end{cases}\\
N^2_{T}:&
\begin{cases}
\mathbb{E}^\sigma_T&\to   W_p^{\nicefrac{1}{2}-\nicefrac{1}{2p}}((0,T);\mathbb{R}^n)\,,\\
\gamma&\mapsto b(\gamma)
\end{cases}
\end{align*}
with 
\begin{align*}
f(\gamma)^i(t,x)&:=\left(\frac{1}{\left|\gamma^i_{x}(t,x)\right|^{2}}-\frac{1}{\left|\sigma^i_x(x)\right|^{2}}\right)
\gamma^i_{xx}(t,x)\,,\\
b(\gamma)(t)&:=\sum_{i=1}^3 \left(\left(\frac{1}{\vert \gamma^i_x(t,0)\vert}
-\frac{1}{\vert\sigma^i_x(0)\vert}\right)\gamma^i_x(t,0) +
\frac{\sigma^i_x(0)\left\langle \gamma^i_x(t,0),\sigma^i_x(0)\right\rangle}{\vert \sigma^i_x(0)\vert^3}
\right)
\end{align*}
defined by the right hand side of~\eqref{deff} and~\eqref{defpsi}, respectively.

\begin{prop}
Let $p\in(3,\infty)$ and $M$ be positive. Then for all $T\in (0,\widetilde{T}(\boldsymbol{c},M)]$ the map 
\begin{equation*}
N_T:\mathbb{E}_T^\sigma\cap \overline{B_M}\to \mathbb{F}_T^\sigma\,,\quad N_T(\gamma):= \left(N_T^1(\gamma),\gamma_{|x=1},0,N_T^2(\gamma),\gamma_{|t=0}\right)
\end{equation*}
is well-defined.
\end{prop}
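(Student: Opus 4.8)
The goal is to show that for $T\in(0,\widetilde T(\boldsymbol c,M)]$ the map $N_T$ indeed lands in $\mathbb F_T^\sigma$. Since the components $\gamma\mapsto\gamma_{|x=1}=\sigma(1)$, $\gamma\mapsto 0$ and $\gamma\mapsto\gamma_{|t=0}=\sigma$ are constant on $\mathbb E_T^\sigma$ and automatically have the right target spaces, the real content is twofold: first, that $N_T^1(\gamma)=f(\gamma)\in L_p((0,T);L_p((0,1);(\mathbb R^n)^3))$ and $N_T^2(\gamma)=b(\gamma)\in W_p^{\nicefrac12-\nicefrac1{2p}}((0,T);\mathbb R^n)$; second, that the tuple $(f(\gamma),\sigma(1),0,b(\gamma),\sigma)$ satisfies the linear compatibility conditions of Definition~\ref{linearcompcond}, so that it actually lies in $\mathbb F_T$ (and hence in $\mathbb F_T^\sigma$).

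\textbf{Membership in the right function spaces.} For $N_T^1$: by Lemma~\ref{curveregular}, for $T\leq\widetilde T(\boldsymbol c,M)$ every $\gamma\in\mathbb E_T^\sigma\cap\overline{B_M}$ satisfies $|\gamma^i_x(t,x)|\geq\boldsymbol c>0$ everywhere, and by Theorem~\ref{embeddingBUCimeindependent} $\gamma$ is bounded in $C([0,T];C^1([0,1]))$ by $C(T_0,p)M$. Hence the scalar factor $\bigl(|\gamma^i_x|^{-2}-|\sigma^i_x|^{-2}\bigr)$ is bounded in $C([0,T]\times[0,1])$ (using $|\sigma^i_x|\geq 2\boldsymbol c$ as well), while $\gamma^i_{xx}\in L_p((0,T)\times(0,1))$ because $\gamma\in\boldsymbol E_T$; the product is therefore in $L_p$ with the required norm bound. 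For $N_T^2$: the boundary traces $t\mapsto\gamma^i_x(t,0)$ lie in $W_p^{\nicefrac12-\nicefrac1{2p}}((0,T);\mathbb R^n)$ by Lemma~\ref{derivativesol} together with the identification of Proposition~\ref{identification}. Since $\nicefrac12-\nicefrac1{2p}>\nicefrac1p$ for $p>3$, $\nicefrac1{2p}$- - more precisely since $s:=\nicefrac12-\nicefrac1{2p}$ satisfies $s-\nicefrac1p>0$ exactly when $p>3$, the Banach algebra property (Proposition~\ref{Banachalgebra}) applies: the smooth function $z\mapsto |z|^{-1}-|\sigma^i_x(0)|^{-1}$, composed with the trace $\gamma^i_x(\cdot,0)$ which stays in the region $|z|\geq\boldsymbol c$ where this function is smooth, yields an element of $W_p^{s}$, and multiplying by $\gamma^i_x(\cdot,0)\in W_p^{s}$ keeps us in $W_p^{s}$; the second summand is linear in $\gamma^i_x(\cdot,0)$ hence also in $W_p^{s}$. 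Summing over $i$ gives $b(\gamma)\in W_p^{\nicefrac12-\nicefrac1{2p}}((0,T);\mathbb R^n)$.

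\textbf{Compatibility conditions.} Here one uses crucially that $\gamma\in\mathbb E_T^\sigma$ forces $\gamma_{|t=0}=\sigma$ and $\gamma^1(t,0)=\gamma^2(t,0)=\gamma^3(t,0)$ (the latter by definition of $\mathbb E_T$). Thus with $\psi:=\sigma$, $\eta:=\sigma(1)$ we have $\psi^i(0)=\psi^j(0)$ trivially, $\psi^i(1)=\sigma^i(1)=\eta^i(0)$, and the third (affine boundary) compatibility identity $-\sum_i\bigl(\tfrac{\psi^i_x(0)}{|\sigma^i_x(0)|}-\tfrac{\sigma^i_x(0)\langle\psi^i_x(0),\sigma^i_x(0)\rangle}{|\sigma^i_x(0)|^3}\bigr)=b(0)$ reduces, on inserting $\psi=\sigma$, to the identity $-\sum_i\bigl(\tfrac{\sigma^i_x(0)}{|\sigma^i_x(0)|}-\tfrac{\sigma^i_x(0)|\sigma^i_x(0)|^2}{|\sigma^i_x(0)|^3}\bigr)=b(\gamma)(0)$, whose left side is $0$; and evaluating the formula for $b(\gamma)$ at $t=0$ using $\gamma^i_x(0,0)=\sigma^i_x(0)$ gives $b(\gamma)(0)=\sum_i\bigl((\tfrac1{|\sigma^i_x(0)|}-\tfrac1{|\sigma^i_x(0)|})\sigma^i_x(0)+\tfrac{\sigma^i_x(0)|\sigma^i_x(0)|^2}{|\sigma^i_x(0)|^3}\bigr)$, i.e. $b(\gamma)(0)=\sum_i\tfrac{\sigma^i_x(0)}{|\sigma^i_x(0)|}$, which is $0$ precisely because $\sigma$ is an \emph{admissible} initial parametrisation (the angle condition $\sum_i\sigma^i_x(0)/|\sigma^i_x(0)|=0$). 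So both sides vanish and the identity holds. This is the only place where the precise algebraic form of the linearised angle condition interacts with admissibility, and it is the step I would be most careful to write out cleanly — not because it is deep, but because a sign error or a mismatch between the linearisation~\eqref{defpsi} and the definition of $b(\gamma)$ would break the argument. Having verified all three conditions, $(f(\gamma),\sigma(1),0,b(\gamma),\sigma)\in\mathbb F_T$, and since the second, third and fifth entries match the defining constraints of $\mathbb F_T^\sigma$, the tuple lies in $\mathbb F_T^\sigma$, so $N_T$ is well-defined.
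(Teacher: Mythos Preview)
Your proof is correct and follows essentially the same route as the paper's: bound $N_T^1$ pointwise using the uniform lower bound on $|\gamma^i_x|$ from Lemma~\ref{curveregular}, handle $N_T^2$ via the trace Lemma~\ref{derivativesol} and the Banach algebra / composition property of Proposition~\ref{Banachalgebra} (valid since $s-\nicefrac1p>0$ for $p>3$), and then verify the linear compatibility conditions by plugging in $\gamma_{|t=0}=\sigma$ and using the admissibility angle condition $\sum_i\sigma^i_x(0)/|\sigma^i_x(0)|=0$.

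The one cosmetic point worth tightening: Proposition~\ref{Banachalgebra} is stated for $F$ smooth on all of $\mathbb R^d$, whereas $z\mapsto|z|^{-1}$ is singular at the origin. You correctly note that the trace $\gamma^i_x(\cdot,0)$ stays in $\{|z|\geq\boldsymbol c\}$, so this is harmless---one simply replaces $z/|z|$ by a globally smooth function $h$ agreeing with it on $\{|z|\geq\boldsymbol c/2\}$, exactly as the paper does explicitly. This is not a gap in your argument, just something to state once so that Proposition~\ref{Banachalgebra} applies verbatim.
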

\begin{proof}
Let $T\in (0,\widetilde{T}(\boldsymbol{c},M)]$ and $\gamma\in\mathbb{E}_T^\sigma\cap \overline{B_M}$ be given.
Lemma~\ref{curveregular} implies
\begin{align*}
&\Big\Vert \left(\frac{1}{\vert \gamma^i_x\vert^2}-\frac{1}{\vert\sigma^i_x\vert^2}\right) 
\gamma^i_{xx}\Big\Vert_{L_p((0,T);L_p((0,1);\mathbb{R}^n))}
=\int_0^T\int_0^1
\left\lvert \frac{1}{\vert \gamma^i_x\vert^2}-\frac{1}{\vert\sigma^i_x\vert^2}\right\rvert^p
\vert \gamma^i_{xx} \vert^p\,\mathrm{d}x\,\mathrm{d}t\\
&\leq C\left(\sup_{x\in [0,1], t\in [0,T]}\frac{1}{\vert \gamma^i_x\vert^{2p}}+
\sup_{x\in [0,1]}\frac{1}{\vert \sigma^i_x\vert^{2p}}\right)
\int_0^T\int_0^1\vert \gamma^i_{xx} \vert^p\,\mathrm{d}x\,\mathrm{d}t\\
&\leq C(\boldsymbol{c})\Vert \gamma^i_{xx}\Vert^p_{L_p((0,T);L_p((0,1);\mathbb{R}^n))}\leq C(\boldsymbol{c},M)<\infty\,.
\end{align*}

We now show that $N_T^2(\gamma)$ lies in $W_p^{\nicefrac{1}{2}-\nicefrac{1}{2p}}\left((0,T);\mathbb{R}^n\right)$. 
Let $h:\mathbb{R}^n\to\mathbb{R}^n$ be a smooth function such that $h(p)=\frac{p}{\vert p\vert}$ for all $p\in \mathbb{R}^n\setminus B_{\nicefrac{\boldsymbol{c}}{2}}(0)$. Then one observes that for all $t\in [0,T]$ 
\begin{equation}\label{identityb}
b(\gamma)(t)=\sum_{i=1}^3 h(\gamma^i_x(t))-\left(Dh\right)(\sigma^i_x)\gamma^i_x(t)
\end{equation}
where we omitted the evaluation in $x=0$ to ease notation. Each term in the sum can be expressed as
\begin{align*}
h(\gamma^i_x(t))-\left(Dh\right)(\sigma^i_x)\gamma^i_x(t)&=\int_0^1(Dh)(\xi\gamma^i_x(t)+(1-\xi)\sigma^i_x)\mathrm{d}\xi\,(\gamma^i_x(t)-\sigma^i_x)\\
&\phantom{=}-(Dh)(\sigma^i_x)\left(\gamma^i_x(t)-\sigma^i_x\right)+h(\sigma^i_x)-Dh(\sigma^i_x)\sigma^i_x\,.
\end{align*}
All terms that are constant in $t$ are smooth in $t$ and by Lemma~\ref{derivativesol} we have
\begin{equation*}
t\mapsto \gamma^i_x(t,0) \in W_p^{\nicefrac{1}{2}-\nicefrac{1}{2p}}\left((0,T);\mathbb{R}^n\right)\,.
\end{equation*} 
As $W_p^{\nicefrac{1}{2}-\nicefrac{1}{2p}}\left((0,T);\mathbb{R}\right)$ is a Banach algebra according to Proposition~\ref{Banachalgebra}, it only remains to show  
\begin{equation*}
t\mapsto \int_0^1(Dh)(\xi\gamma^i_x(t,0)+(1-\xi)\sigma^i_x(0))\mathrm{d}\xi \in W_p^{\nicefrac{1}{2}-\nicefrac{1}{2p}}\left((0,T);\mathbb{R}^{n\times n}\right)
\end{equation*}
which follows from the second assertion in Proposition~\ref{Banachalgebra}. Observe that $\gamma_{|x=1}=\sigma(1)$ and $\gamma_{|t=0}=\sigma$ by definition of $\mathbb{E}_T^\sigma$. As
\begin{equation*}
N_T^2(\gamma)_{|t=0}=\sum_{i=1}^3\frac{\sigma^i_x(0)}{\vert\sigma^i_x(0)\vert}=0=-\sum_{i=1}^{3}\left(\frac{\sigma^i_x(0)}{\vert\sigma^i_x(0)\vert}-\frac{\sigma^i_x(0)\left\langle\sigma^i_x(0),\sigma^i_x(0)\right\rangle}{\vert\sigma^i_x(0)\vert^3}\right)
\end{equation*}
and as $\sigma^i(0)=\sigma^j(0)$, $\sigma^i(1)=\gamma^i(0,1)$, we may conclude that 
\begin{equation*}
\left(N_T^1(\gamma),\gamma_{|x=1},0,N_T^2(\gamma),\gamma_{|t=0}\right)=\left(N_T^1(\gamma),\sigma(1),0,N_T^2(\gamma),\sigma\right)\in\mathbb{F}_T^\sigma.
\end{equation*}
\end{proof}
\begin{cor}
	Let $p\in(3,\infty)$ and $M$ be positive. Then for all $T\in (0,\widetilde{T}(\boldsymbol{c},M)]$ the map 
	\begin{equation*}
	K_T:\mathbb{E}_T^\sigma\cap \overline{B_M}\to \mathbb{E}_T^\sigma\,,\quad 
	K_T:=L_T^{-1}N_T
	\end{equation*}
	is well-defined.
\end{cor}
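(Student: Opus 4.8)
The plan is to check that the composition $K_T := L_T^{-1} N_T$ genuinely maps $\mathbb{E}_T^\sigma \cap \overline{B_M}$ into $\mathbb{E}_T^\sigma$, so the only thing to verify is that the output lies in the affine subspace $\mathbb{E}_T^\sigma$ rather than merely in $\mathbb{E}_T$. Recall from the previous proposition that for $T \in (0,\widetilde{T}(\boldsymbol{c},M)]$ and $\gamma \in \mathbb{E}_T^\sigma \cap \overline{B_M}$, the curves $\gamma^i(t)$ stay regular (Lemma~\ref{curveregular}), so $N_T(\gamma)$ is a well-defined element of $\mathbb{F}_T^\sigma$, i.e. it has the form $\left(N_T^1(\gamma), \sigma(1), 0, N_T^2(\gamma), \sigma\right)$ with the fourth slot satisfying the linear compatibility conditions of Definition~\ref{linearcompcond}. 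In particular $N_T(\gamma) \in \mathbb{F}_T$, so Theorem~\ref{exlin} applies and $L_T^{-1} N_T(\gamma) \in \mathbb{E}_T$ is well-defined.

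Next I would read off the boundary and initial traces of $K_T(\gamma) = L_T^{-1} N_T(\gamma)$ directly from the definition of $L_T$. Writing $\beta := K_T(\gamma)$, the identity $L_T \beta = N_T(\gamma)$ means precisely that $\beta$ solves system~\eqref{linsys} with right-hand side $\left(N_T^1(\gamma), \sigma(1), 0, N_T^2(\gamma), \sigma\right)$. The second component of $L_T$ is $\beta_{|x=1}$, which therefore equals $\sigma(1)$; the last component is $\beta_{|t=0}$, which equals $\psi = \sigma$; and the third component forces $\beta^1(t,0) = \beta^2(t,0) = \beta^3(t,0)$, which together with $\beta \in \boldsymbol{E}_T$ places $\beta$ in $\mathbb{E}_T$. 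Combining these three facts gives exactly $\beta \in \mathbb{E}_T^\sigma$, as required.

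The only genuine content here — and what makes the statement nontrivial rather than tautological — is that $N_T$ indeed lands in $\mathbb{F}_T^\sigma$ (and in particular in $\mathbb{F}_T$, so that $L_T^{-1}$ may be applied at all): this is the content of the preceding proposition, whose proof uses the regularity lower bound from Lemma~\ref{curveregular}, the Banach algebra property of $W_p^{\nicefrac12 - \nicefrac1{2p}}$ (Proposition~\ref{Banachalgebra}), the trace estimate of Lemma~\ref{derivativesol}, and the verification of the compatibility condition at $t=0$ using that $\sigma$ is an admissible initial parametrisation. Thus there is no real obstacle at this step; the corollary is a direct bookkeeping consequence of the preceding proposition and Theorem~\ref{exlin}, and the proof amounts to composing the two well-definedness statements. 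The restriction $T \le \widetilde{T}(\boldsymbol{c},M)$ is exactly what is needed so that $\overline{B_M}$ consists of regular parametrisations, which is why it is inherited verbatim from the proposition.
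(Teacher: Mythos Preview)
Your proposal is correct and follows essentially the same approach as the paper: use the preceding proposition to place $N_T(\gamma)$ in $\mathbb{F}_T^\sigma\subset\mathbb{F}_T$, apply $L_T^{-1}$, and then read off the initial and boundary traces of $K_T(\gamma)$ from the components of $L_T$ to conclude membership in $\mathbb{E}_T^\sigma$. The only minor redundancy is your separate check of the concurrency condition; since $L_T^{-1}:\mathbb{F}_T\to\mathbb{E}_T$ is already an isomorphism, the image automatically lies in $\mathbb{E}_T$, so only $\beta_{|t=0}=\sigma$ and $\beta_{|x=1}=\sigma(1)$ need to be verified.
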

\begin{proof}
	Let $T\in(0,\widetilde{T}(\boldsymbol{c},M)]$ and $\gamma\in\mathbb{E}_T^\sigma\cap \overline{B_M}$. By the previous proof we have
	\begin{equation*}
	N_T(\gamma)=\left(N_T^1(\gamma),\gamma_{|x=1},0,N_T^2(\gamma),\gamma_{|t=0}\right)\in \mathbb{F}_T^\sigma\subset\mathbb{F}_T
	\end{equation*}
	and thus in particular
	\begin{equation*}
	K_T(\gamma)=L_T^{-1}(N_T(\gamma))\in\mathbb{E}_T\,.
	\end{equation*}
	To verify that $K_T(\gamma)$ lies in $\mathbb{E}_T^\sigma$ we observe that 
	\begin{align*}
	K_T(\gamma)_{|t=0}&=\left(L_T\left(K_T(\gamma)\right)\right)_5=N_T(\gamma)_5=\gamma_{|t=0}=\sigma\,,\\
	K_T(\gamma)_{|x=1}&=\left(L_T\left(K_T(\gamma)\right)\right)_2=N_T(\gamma)_2=\gamma_{|x=1}=\sigma(1)\,.
	\end{align*}
\end{proof}
\begin{prop}\label{Kcontraction}
Let $p\in(3,\infty)$ and $M$ be positive. There exists $T(\boldsymbol{c},M)\in (0,\widetilde{T}(\boldsymbol{c},M)]$ such that for all $T\in (0,T(\boldsymbol{c},M)]$
the map 
$K_T:\mathbb{E}^\sigma_T\cap \overline{B_M}\to\mathbb{E}^\sigma_T$ 
is a contraction. 
\end{prop}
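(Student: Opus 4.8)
The plan is to show that $K_T = L_T^{-1} N_T$ is a contraction on $\mathbb{E}^\sigma_T \cap \overline{B_M}$ for $T$ small by exploiting that $\vertiii{L_T^{-1}}_{\mathcal{L}(\mathbb{F}_T,\mathbb{E}_T)}$ is bounded uniformly in $T \le T_0$ (Lemma~\ref{Luniformlybounded}), so that it suffices to make the Lipschitz constant of the nonlinearity $N_T$ on $\overline{B_M}$ small. Concretely, for $\gamma, \widetilde{\gamma} \in \mathbb{E}^\sigma_T \cap \overline{B_M}$ we have $\gamma_{|x=1} = \widetilde{\gamma}_{|x=1} = \sigma(1)$ and $\gamma_{|t=0} = \widetilde{\gamma}_{|t=0} = \sigma$, so the last three components of $N_T(\gamma) - N_T(\widetilde{\gamma})$ vanish and we only need to estimate $\lVert N^1_T(\gamma) - N^1_T(\widetilde{\gamma}) \rVert_{L_p((0,T);L_p((0,1);(\mathbb{R}^n)^3))}$ and $\vertiii{N^2_T(\gamma) - N^2_T(\widetilde{\gamma})}_{W_p^{\nicefrac12 - \nicefrac{1}{2p}}((0,T);\mathbb{R}^n)}$.

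First I would treat the bulk term $N^1_T$. Writing $f(\gamma)^i = g(\gamma^i_x)\gamma^i_{xx}$ with $g(p) = |p|^{-2} - |\sigma^i_x|^{-2}$, I would add and subtract to get
\begin{equation*}
f(\gamma)^i - f(\widetilde\gamma)^i = \bigl(g(\gamma^i_x) - g(\widetilde\gamma^i_x)\bigr)\gamma^i_{xx} + g(\widetilde\gamma^i_x)\bigl(\gamma^i_{xx} - \widetilde\gamma^i_{xx}\bigr)\,.
\end{equation*}
On $\overline{B_M}$ with $T \le \widetilde T(\boldsymbol c, M)$ Lemma~\ref{curveregular} gives the uniform lower bound $|\gamma^i_x|, |\widetilde\gamma^i_x| \ge \boldsymbol c$, so $g$ is Lipschitz on the relevant range with a constant depending only on $\boldsymbol c$; moreover by Theorem~\ref{uniformcalphac1} (or Theorem~\ref{embeddingBUCimeindependent}) the $C^0$-norms of $\gamma^i_x - \widetilde\gamma^i_x$ are controlled by $\vertiii{\gamma - \widetilde\gamma}_{\boldsymbol E_T}$. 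For the first summand I would pull out $\lVert g(\gamma^i_x) - g(\widetilde\gamma^i_x)\rVert_{C^0} \le C(\boldsymbol c)\,\vertiii{\gamma - \widetilde\gamma}_{\boldsymbol E_T}$ and bound the remaining $\lVert \gamma^i_{xx}\rVert_{L_p L_p} \le M$; for the second, $\lVert g(\widetilde\gamma^i_x)\rVert_{C^0} \le C(\boldsymbol c)$ is just bounded, but the factor $\lVert \gamma^i_{xx} - \widetilde\gamma^i_{xx}\rVert_{L_p L_p}$ carries no smallness. To extract smallness from this second summand I would use that $g(\widetilde\gamma^i_x) = g(\widetilde\gamma^i_x) - g(\sigma^i_x)$ since $g(\sigma^i_x) = 0$, and then $\lVert g(\widetilde\gamma^i_x)\rVert_{C^0} \le C(\boldsymbol c)\lVert \widetilde\gamma^i_x - \sigma^i_x\rVert_{C^0} \le C(\boldsymbol c) T^\alpha M$ by the Hölder-in-time estimate from Theorem~\ref{uniformcalphac1} together with $\widetilde\gamma^i_x(0) = \sigma^i_x$. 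Hence both summands are bounded by $C(\boldsymbol c, M)\,(\vertiii{\gamma-\widetilde\gamma}_{\boldsymbol E_T} + T^\alpha\vertiii{\gamma-\widetilde\gamma}_{\boldsymbol E_T})$ — wait, the first summand needs the same treatment: there $\lVert g(\gamma^i_x) - g(\widetilde\gamma^i_x)\rVert_{C^0}$ already produces the factor $\vertiii{\gamma-\widetilde\gamma}$, but I still need an extra power of $T$ to beat the fixed constant $c(T_0,p)$, so I would instead bound $\lVert \gamma^i_{xx}\rVert_{L_p((0,T);L_p)}$: this is not small. The correct device is to note $\lVert\gamma - \widetilde\gamma\rVert_{C([0,T];C^1)} \le C T^\alpha \vertiii{\gamma - \widetilde\gamma}_{\boldsymbol E_T}$ because $(\gamma - \widetilde\gamma)(0) = 0$, which furnishes the needed $T^\alpha$ smallness in the Lipschitz constant directly from the Hölder-in-time embedding applied to the difference.

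For the boundary term $N^2_T$ I would use the representation~\eqref{identityb}, $b(\gamma)(t) = \sum_i h(\gamma^i_x(t)) - (Dh)(\sigma^i_x)\gamma^i_x(t)$ with $h$ smooth, evaluated at $x=0$. Writing the difference and using the fundamental theorem of calculus as in the proof of the previous proposition, $b(\gamma) - b(\widetilde\gamma)$ becomes a sum of products of factors of the form $\int_0^1(Dh)(\xi\gamma^i_x + (1-\xi)\sigma^i_x)\,\rd\xi$ (and its $\widetilde\gamma$-analogue, and $(Dh)(\sigma^i_x)$) with factors $\gamma^i_x - \widetilde\gamma^i_x$ or $\gamma^i_x - \sigma^i_x$ evaluated at $x=0$. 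By Lemma~\ref{derivativesol} and Proposition~\ref{identification} the traces $t \mapsto \gamma^i_x(t,0)$ lie in $W_p^{\nicefrac12 - \nicefrac{1}{2p}}((0,T);\mathbb{R}^n)$, by the Banach algebra property (Proposition~\ref{Banachalgebra}) products stay in this space, and the second assertion of Proposition~\ref{Banachalgebra} handles the $\xi$-integrated smooth compositions. The factor $\gamma^i_x(\cdot,0) - \widetilde\gamma^i_x(\cdot,0)$ has small $W_p^{\nicefrac12 - \nicefrac{1}{2p}}$-norm after using $\vertiii{\cdot}_{W_p^{\nicefrac12-\nicefrac{1}{2p}}}$ (Lemma~\ref{equivnorm2}): since the difference vanishes at $t=0$, the Hölder-in-time embedding of Theorem~\ref{uniformcalphac1} again yields a factor $T^{\alpha'}$. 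Putting the two components together gives $\vertiii{N_T(\gamma) - N_T(\widetilde\gamma)}_{\mathbb{F}_T} \le C(\boldsymbol c, M, T_0, p)\,T^{\alpha''}\,\vertiii{\gamma - \widetilde\gamma}_{\boldsymbol E_T}$, whence $\vertiii{K_T(\gamma) - K_T(\widetilde\gamma)}_{\boldsymbol E_T} \le c(T_0,p)\,C\,T^{\alpha''}\,\vertiii{\gamma - \widetilde\gamma}_{\boldsymbol E_T}$, and choosing $T(\boldsymbol c, M) \le \widetilde T(\boldsymbol c, M)$ small enough makes the prefactor $< 1$. The main obstacle is precisely this bookkeeping of where the smallness comes from: every nonlinear term must be rearranged so that some factor either vanishes at $t=0$ (to gain $T^\alpha$ via the Hölder-in-time embedding) or is itself a difference quotient of a function vanishing on $\sigma$; one must also be careful that the uniform-in-$T$ operator bound $c(T_0,p)$ from Lemma~\ref{Luniformlybounded} does not secretly blow up, which is exactly why the extension-operator machinery of Lemmas~\ref{extensionE_T} and~\ref{extensionboundarydata} and Theorems~\ref{embeddingBUCimeindependent} and~\ref{uniformcalphac1} was set up beforehand.
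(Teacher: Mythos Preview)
Your overall strategy coincides with the paper's: bound $\vertiii{L_T^{-1}}$ uniformly via Lemma~\ref{Luniformlybounded}, then show the Lipschitz constant of $N_T$ on $\overline{B_M}$ is $O(T^{\alpha})$. Your treatment of $N^1_T$ is correct and matches the paper's, including the key observation that both $\lVert\widetilde\gamma^i_x-\sigma^i_x\rVert_{C^0}$ and $\lVert\gamma^i_x-\widetilde\gamma^i_x\rVert_{C^0}$ are $O(T^\alpha)$ by Theorem~\ref{uniformcalphac1} since the relevant differences vanish at $t=0$.

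There is, however, a genuine slip in your boundary estimate. You assert that $\lVert\gamma^i_x(\cdot,0)-\widetilde\gamma^i_x(\cdot,0)\rVert_{W_p^{\nicefrac12-\nicefrac{1}{2p}}((0,T))}$ is itself small of order $T^{\alpha'}$ because the function vanishes at $t=0$. That is false: vanishing at $t=0$ gives smallness of the $C^0$-norm via H\"older continuity, but the Slobodeckij seminorm does \emph{not} become small as $T\to 0$; it is only uniformly bounded by $C\vertiii{\gamma-\widetilde\gamma}_{\boldsymbol{E}_T}$. The paper circumvents this by first deriving the exact factorisation
\begin{equation*}
b(\gamma)(t)-b(\widetilde\gamma)(t)=\sum_{i=1}^3\bigl(g^i(t)-g^i(0)\bigr)\bigl(\gamma^i_x(t,0)-\widetilde\gamma^i_x(t,0)\bigr)\,,\qquad g^i(t):=\int_0^1(Dh)\bigl(\xi\gamma^i_x(t,0)+(1-\xi)\widetilde\gamma^i_x(t,0)\bigr)\,\mathrm{d}\xi\,,
\end{equation*}
where the linear term $(Dh)(\sigma^i_x)\gamma^i_x$ in~\eqref{identityb} is precisely what produces the subtraction of $g^i(0)=(Dh)(\sigma^i_x(0))$. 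Now \emph{both} factors vanish at $t=0$, so both have small $C^0$-norm ($T^\beta$ and $T^\alpha$ respectively), while their $W_p^{\nicefrac12-\nicefrac{1}{2p}}$-norms are merely bounded by $C(\boldsymbol c,M,T_0)$ and $C\vertiii{\gamma-\widetilde\gamma}_{\boldsymbol{E}_T}$. The product estimate of Proposition~\ref{Banachalgebra}, $\lVert fg\rVert_{W^s_p}\le C(\lVert f\rVert_{C^0}\lVert g\rVert_{W^s_p}+\lVert g\rVert_{C^0}\lVert f\rVert_{W^s_p})$, then yields $C\,T^{\min\{\alpha,\beta\}}\vertiii{\gamma-\widetilde\gamma}_{\boldsymbol{E}_T}$. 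Your vaguer decomposition into ``products of factors'' does not guarantee that each product has one factor with small $C^0$-norm multiplying a $W^s_p$-bounded factor \emph{and} vice versa, which is exactly what the Banach-algebra product estimate requires to extract smallness.
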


\begin{proof}
Let $T\in(0,\widetilde{T}(\boldsymbol{c},M)]$ and $\gamma,\widetilde{\gamma}\in\mathbb{E}^\sigma_T\cap\overline{B_M}$ be fixed.
We begin by estimating
\begin{equation*}
\Vert N^1_T(\gamma)-N^1_T(\widetilde{\gamma}) \Vert_{L_p\left((0,T);L_p\left((0,1);(\mathbb{R}^n)^3\right)\right)}=
\Vert f(\gamma)-f(\widetilde{\gamma})\Vert_{L_p\left((0,T);L_p\left((0,1);(\mathbb{R}^n)^3\right)\right)}\,.
\end{equation*}
The $i$-th component of $f(\gamma)-f(\widetilde{\gamma})$ is given by
\begin{align*}
&\left(\frac{1}{\vert\gamma_x^i\vert^2}-\frac{1}{\vert\sigma_x^i\vert^2}\right)
(\gamma_{xx}^i-\widetilde{\gamma}_{xx}^i)
+\left(\frac{1}{\vert\gamma_x^i\vert^2}-\frac{1}{\vert\widetilde{\gamma}_x^i\vert^2}\right)\widetilde{\gamma}^i_{xx}\\
&=\left(\frac{1}{\vert \gamma^i_x\vert^2\vert \sigma^i_x\vert}
+\frac{1}{\vert \gamma^i_x\vert\vert \sigma^i_x\vert^2}\right)
\left(\vert \sigma^i_x\vert-\vert \gamma^i_x\vert\right)
(\gamma_{xx}^i-\widetilde{\gamma}_{xx}^i)\\
&\quad +\left(\frac{1}{\vert \gamma^i_x\vert^2 \vert \widetilde{\gamma}^i_x\vert}
+\frac{1}{\vert \gamma^i_x\vert\vert \widetilde{\gamma}^i_x\vert^2}\right)
\left(\vert \widetilde{\gamma}^i_x\vert-\vert \gamma^i_x\vert\right)\widetilde{\gamma}^i_{xx}\,.
\end{align*}
Lemma~\ref{curveregular} implies
\begin{align*}
\sup_{t\in[0,T],x\in[0,1]}\left\vert\frac{1}{\vert \gamma^i_x\vert^2\vert \sigma^i_x\vert}
+\frac{1}{\vert \gamma^i_x\vert\vert \sigma^i_x\vert^2}\right\vert\leq C(\boldsymbol{c})<\infty\,,
\end{align*}
and
\begin{align*}
\sup_{t\in[0,T],x\in[0,1]}\left\lvert
\frac{1}{\vert \gamma^i_x\vert^2\vert \widetilde{\gamma}^i_x\vert}
+\frac{1}{\vert \gamma^i_x\vert\vert \widetilde{\gamma}^i_x\vert^2}
\right\rvert\leq C(\boldsymbol{c})<\infty\,.
\end{align*}
Hence we obtain
\begin{align*}
&\left\lVert f(\gamma)^i-f(\widetilde{\gamma})^i\right\rVert_{L_p(0,T;L_p\left((0,1);(\mathbb{R}^n)^3\right))}\\
&\leq C(\boldsymbol{c})\left(\left\lVert\left(\vert \sigma^i_x\vert-\vert \gamma^i_x\vert\right)
(\gamma_{xx}^i-\widetilde{\gamma}_{xx}^i)\right\rVert_{L_p\left((0,T);L_p(0,1);\mathbb{R}^n\right)}+\left\lVert\left(\vert \widetilde{\gamma}^i_x\vert-\vert \gamma^i_x\vert\right)\widetilde{\gamma}^i_{xx}\right\rVert_{L_p\left((0,T);L_p(0,1);\mathbb{R}^n\right)}\right)\\
&\leq C(\boldsymbol{c})\left(\sup_{t\in[0,T],x\in[0,1]}\left\vert \vert \sigma^i_x(x)\vert-\vert \gamma^i_x(t,x)\vert\right\vert\left\lVert\gamma^i_{xx}-\widetilde{\gamma}^i_{xx}\right\rVert_{L_p\left((0,T);L_p(0,1);\mathbb{R}^n)\right)}\right.\\
&\left. \phantom {C(\boldsymbol{c})} \,\,\,\,\,\,\,\,+\sup_{t\in[0,T],x\in[0,1]}\left\vert \vert \widetilde{\gamma}^i_x(t,x)\vert-\vert \gamma^i_x(t,x)\vert\right\vert \left\lVert\widetilde{\gamma}^i_{xx}\right\rVert_{L_p\left((0,T);L_p((0,1);\mathbb{R}^n)\right)}\right)\\
&\leq C(\boldsymbol{c})\sup_{t\in[0,T],x\in[0,1]}\left\lvert\sigma^i_x(x)-\gamma^i_x(t,x)\right\rvert\vertiii{\gamma-\widetilde{\gamma}}_{\boldsymbol{E}_T}\\
&\quad + C(\boldsymbol{c})\sup_{t\in[0,T],x\in[0,1]}\left\lvert\widetilde{\gamma}^i_x(t,x)-\gamma^i_x(t,x)\right\rvert\vertiii{\widetilde{\gamma}}_{\boldsymbol{E}_T}\,.
\end{align*}
Let $\theta\in\left(\frac{1+\nicefrac{1}{p}}{2-\nicefrac{2}{p}},1\right)$, $\delta\in\left(0,1-\nicefrac{1}{p}\right)$ be fixed and define $\alpha:=(1-\theta)(1-\nicefrac{1}{p}-\delta)$. Theorem~\ref{uniformcalphac1} implies
\begin{align*}
&\sup_{t\in[0,T],x\in[0,1]}\left\lvert\sigma^i_x(x)-\gamma^i_x(t,x)\right\rvert=\sup_{t\in[0,T]}\left\lVert\gamma^i_x(0)-\gamma^i_x(t)\right\rVert_{C([0,1];\mathbb{R}^n)}\\
&\leq \sup_{t\in[0,T]}\left\lVert\gamma^i(t)-\gamma^i(0)\right\rVert_{C^1([0,1];\mathbb{R}^n)}\leq \sup_{t\in[0,T]}t^\alpha\left\lVert\gamma^i\right\rVert_{C^\alpha\left([0,T];C^1([0,1];\mathbb{R}^n)\right)}\\
&\leq T^\alpha\left\lVert\gamma^i\right\rVert_{C^\alpha\left([0,T];C^1([0,1];\mathbb{R}^n)\right)}\leq T^\alpha C(T_0,p,\theta,\delta)\vertiii{\gamma}_{\boldsymbol{E}_T}\leq C(M)T^\alpha\,.
\end{align*}
Similarly we obtain
\begin{align*}
\sup_{t\in[0,T],x\in[0,1]}\left\lvert\widetilde{\gamma}^i_x(t,x)-\gamma^i_x(t,x)\right\rvert&= \sup_{t\in[0,T],x\in[0,1]}\left\lvert\left(\widetilde{\gamma}^i_x-\gamma^i_x\right)(t,x)-\left(\widetilde{\gamma}^i_x-\gamma^i_x\right)(0,x)\right\rvert\\
&\leq\sup_{t\in[0,T]}\left\lVert\left(\widetilde{\gamma}^i-\gamma^i\right)(t)-\left(\widetilde{\gamma}^i-\gamma^i\right)(0)\right\rVert_{C^1([0,1];\mathbb{R}^n)}\\
&\leq \sup_{t\in[0,T]} t^\alpha\left\lVert\widetilde{\gamma}^i-\gamma^i\right\rVert_{C^\alpha\left([0,T];C^1([0,1];\mathbb{R}^n)\right)}\leq CT^\alpha\vertiii{\widetilde{\gamma}-\gamma}_{\boldsymbol{E}_T}\,.
\end{align*}
This allows us to conclude
\begin{equation*}
\left\lVert f(\gamma)-f(\widetilde{\gamma})\right\rVert_{L_p\left((0,T);L_p((0,1);(\mathbb{R}^n)^3)\right)}\leq C(\boldsymbol{c},M)T^\alpha\vertiii{\gamma-\widetilde{\gamma}}_{\boldsymbol{E}_T}\,.
\end{equation*}
We proceed by estimating 

\begin{equation*}
\left\lVert N_T^2(\gamma)-N_T^2(\widetilde{\gamma})\right\rVert_{W_p^{\nicefrac{1}{2}-\nicefrac{1}{2p}}((0,T);\mathbb{R}^n)}=\left\lVert b(\gamma)-b(\widetilde{\gamma})\right\rVert_{W_p^{\nicefrac{1}{2}-\nicefrac{1}{2p}}((0,T);\mathbb{R}^n)}\,.
\end{equation*}
Let $T\in(0,\widetilde{T}(\boldsymbol{c},M)]$ be fixed and $h:\mathbb{R}^n\to\mathbb{R}^n$ be a smooth function such that $h(p)=\frac{p}{\vert p\vert}$ on $\mathbb{R}^n\setminus B_{\nicefrac{\boldsymbol{c}}{2}}(0)$. As for all $t\in[0,T]$ and $\eta\in\mathbb{E}_T^\sigma\cap\overline{B_M}$,
\begin{equation*}
\vert\eta^i_x(t,0)\vert \geq \boldsymbol{c}\,,
\end{equation*}
we may conclude that for all $\gamma,\widetilde{\gamma}\in\mathbb{E}_T^\sigma\cap\overline{B_M}$, the function
\begin{equation*}
t\mapsto g^i(t):=\int_0 ^1\left(Dh\right)(\xi\gamma^i_x(t,0)+(1-\xi)\widetilde{\gamma}^i_x(t,0))\mathrm{d}\xi
\end{equation*}
lies in $W_p^{\nicefrac{1}{2}-\nicefrac{1}{2p}}(0,T;\mathbb{R}^{n\times n})$. To ease notation we let $s:=\nicefrac{1}{2}-\nicefrac{1}{2p}$. Observe that $g^i(0)=(Dh)(\sigma^i_x(0))$ and thus using identity~\eqref{identityb} we obtain
\begin{equation*}
b(\gamma)(t)-b(\widetilde{\gamma})(t)=\sum_{i=1}^3 \left(g^i(t)-g^i(0)\right)\left(\gamma^i_x(t,0)-\widetilde{\gamma}^i_x(t,0)\right)\,.
\end{equation*}
Using the product estimate in Proposition~\ref{Banachalgebra} we obtain
\begin{align*}
\left\lVert b(\gamma)-b(\widetilde{\gamma})\right\rVert_{W_p^s\left((0,T);\mathbb{R}^n\right)}&\leq\sum_{i=1}^3\left\lVert\left(g^i-g^i(0)\right)\left(\gamma^i_x(\cdot,0)-\widetilde{\gamma}^i_x(\cdot,0)\right)\right\rVert_{W_p^s\left((0,T);\mathbb{R}^n\right)}\\
&\leq\sum_{i=1}^3 \left\lVert g^i-g^i(0)\right\rVert_{ C([0,T];\mathbb{R}^{n\times n})}\left\lVert\gamma^i_x(\cdot,0)-\widetilde{\gamma}^i_x(\cdot,0)\right\rVert_{W_p^s\left(0,T;\mathbb{R}^n\right)}\\
&\; \; \; \; \quad +\left\lVert g^i-g^i(0)\right\rVert_{ W_p^s\left(0,T;\mathbb{R}^{n\times n}\right)}\left\lVert\gamma^i_x(\cdot,0)-\widetilde{\gamma}^i_x(\cdot,0)\right\rVert_{C([0,T];\mathbb{R}^n)}\,.
\end{align*}
As $s-\frac{1}{p}>0$ due to $p\in(3,\infty)$ there exists $\beta\in(0,1)$ such that 
\begin{align*}
W_p^s\left(0,T;\mathbb{R}^n\right)\hookrightarrow C^\beta\left([0,T];\mathbb{R}^n\right)
\end{align*}
with embedding constant $C(s,p)$. This implies in particular
\begin{equation*}
\sup_{t\in[0,T]}\vert g^i(t)-g^i(0)\vert\leq T^\beta\left\lVert g^i\right\rVert_{C^\beta\left([0,T];\mathbb{R}^{n\times n}\right)}\leq T^\beta C(s,p)\left\lVert g^i\right\rVert_{W_p^s\left((0,T);\mathbb{R}^{n\times n}\right)}\,.
\end{equation*}
Reading carefully through the estimates in Proposition~\ref{Banachalgebra} we observe that
\begin{equation*}
\left\lVert g^i\right\rVert_{W_p^s\left((0,T);\mathbb{R}^{n\times n}\right)}\leq C(T_0,\boldsymbol{c},M)\,.
\end{equation*}
Furthermore, given $\theta\in\left(\frac{1+\nicefrac{1}{p}}{2-\nicefrac{2}{p}},1\right)$ and $\delta\in\left(0,1-\nicefrac{1}{p}\right)$, Theorem~\ref{uniformcalphac1} implies with $\alpha:=(1-\theta)\left(1-\nicefrac{1}{p}-\delta\right)>0$ the estimate
\begin{align*}
\sup_{t\in[0,T]}\left\lvert\gamma^i_x(t,0)-\widetilde{\gamma}^i_x(t,0)\right\rvert&=\sup_{t\in[0,T]}\left\lvert\left(\gamma^i_x-\widetilde{\gamma}^i_x\right)(t,0)-(\gamma^i_x-\widetilde{\gamma}^i_x)(0,0)\right\rvert\\
&\leq \sup_{t\in[0,T]}\left\lVert\left(\gamma^i-\widetilde{\gamma}^i\right)(t)-\left(\gamma^i-\widetilde{\gamma}^i\right)(0)\right\rVert_{C^1\left([0,1];\mathbb{R}^n\right)}\\
&\leq T^\alpha\left\lVert\gamma^i-\widetilde{\gamma}^i\right\rVert_{C^{\alpha}\left([0,T];C^1\left([0,1],\mathbb{R}^n\right)\right)}\leq T^\alpha\vertiii{\gamma-\widetilde{\gamma}}_{\boldsymbol{E}_T}\,.
\end{align*}
This allows us to conclude
\begin{align*}
\left\lVert b(\gamma)-b(\widetilde{\gamma})\right\rVert_{W_p^{\nicefrac{1}{2}-\nicefrac{1}{2p}}\left(0,T;\mathbb{R}^n\right)}\leq C(s,p,T_0,\boldsymbol{c},M)T^\alpha\vertiii{\gamma-\widetilde{\gamma}}_{\boldsymbol{E}_T}\,.
\end{align*}
Finally, Lemma~\ref{Luniformlybounded} implies for all $T\in(0,\widetilde{T}(\boldsymbol{c},M)]$,
\begin{align*}
&\vertiii{K_T(\gamma)-K_T(\widetilde{\gamma})}_{\boldsymbol{E}_T}=\vertiii{L_T^{-1}\left(N_T(\gamma)-N_T(\widetilde{\gamma})\right)}_{\boldsymbol{E}_T}\leq c(T_0,p)\vertiii{N_T(\gamma)-N_T(\widetilde{\gamma})}_{\mathbb{F}_T}\\
&=c(T_0,p)\left(\Vert f(\gamma)-f(\widetilde{\gamma})\Vert_{L_p\left((0,T);L_p\left((0,1);(\mathbb{R}^n)^3\right)\right)}+\left\lVert b(\gamma)-b(\widetilde{\gamma})\right\rVert_{W_p^{\nicefrac{1}{2}-\nicefrac{1}{2p}}\left(0,T;\mathbb{R}^n\right)}\right)\\
&\leq C(T_0,p,\boldsymbol{c},M)T^{\min\{\alpha,\beta\}}\vertiii{\gamma-\widetilde{\gamma}}_{\boldsymbol{E}_T}\,.
\end{align*}
This completes the proof.
\end{proof}

To conclude the existence of a solution with the Banach Fixed Point Theorem we have to show that there exists a radius $M>0$ such that $K_T$ is a self-mapping of $\mathbb{E}_T^\sigma\cap\overline{B_M}$.

\begin{prop}\label{selfmapping}
	Let $p\in(3,\infty)$. There exists a positive radius $M$ depending on $\boldsymbol{c}$ and the norm of $\sigma$ in $W_p^{2-\nicefrac{2}{p}}\left((0,1);(\mathbb{R}^n)^3\right)$ and a positive time $\widehat{T}(\boldsymbol{c},M)$ such that for all $T\in (0,\widehat{T}(\boldsymbol{c},M)]$ the map
	\begin{equation*}
	K_T:\mathbb{E}_T^\sigma\cap\overline{B_M}\to\mathbb{E}_T^\sigma\cap\overline{B_M}
	\end{equation*}
	is well-defined.
\end{prop}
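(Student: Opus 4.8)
The plan is to view $K_T=L_T^{-1}N_T$ as a map whose ``dangerous'' part, coming from the nonlinearities $f(\gamma)$ and $b(\gamma)$, becomes arbitrarily small as $T\to 0$ uniformly on $\overline{B_M}$, while the part carrying the full size of the data is linear and controlled once and for all by $\lVert\sigma\rVert_{W_p^{2-\nicefrac{2}{p}}}$. Concretely, for $\gamma\in\mathbb{E}_T^\sigma\cap\overline{B_M}$ with $T\in(0,\widetilde T(\boldsymbol c,M)]$ (so that $K_T(\gamma)$ is defined and $\lvert\gamma^i_x\rvert\ge\boldsymbol c$ by Lemma~\ref{curveregular}), Lemma~\ref{Luniformlybounded} gives
\[
\vertiii{K_T(\gamma)}_{\boldsymbol E_T}\le c(T_0,p)\,\vertiii{N_T(\gamma)}_{\mathbb F_T}
= c(T_0,p)\Big(\lVert f(\gamma)\rVert_{L_p}+\vertiii{\gamma_{|x=1}}_{W_p^{1-\nicefrac{1}{2p}}}+\vertiii{b(\gamma)}_{W_p^{\nicefrac{1}{2}-\nicefrac{1}{2p}}}+\lVert\gamma_{|t=0}\rVert_{W_p^{2-\nicefrac{2}{p}}}\Big).
\]
Since $\gamma\in\mathbb{E}_T^\sigma$ forces $\gamma_{|x=1}\equiv\sigma(1)$ and $\gamma_{|t=0}=\sigma$, the second and fourth terms are bounded by $C(T_0,p)\lVert\sigma\rVert_{W_p^{2-\nicefrac{2}{p}}((0,1);(\mathbb R^n)^3)}$ uniformly in $T\le T_0$ (using the Sobolev embedding for the point value $\lvert\sigma(1)\rvert$).

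For the remaining two terms I would simply rerun the estimates from the proof of Proposition~\ref{Kcontraction} with the second argument $\widetilde\gamma$ replaced by the time--independent datum $\sigma$, for which $f(\sigma)\equiv 0$ and $b(\sigma)\equiv 0$ (the latter by admissibility of $\sigma$). For the bulk term one writes $f(\gamma)^i=\bigl(\lvert\sigma^i_x\rvert^{-2}-\lvert\gamma^i_x\rvert^{-2}\bigr)\gamma^i_{xx}$, bounds the scalar prefactor by $C(\boldsymbol c,M)\,\bigl\lvert\lvert\sigma^i_x\rvert-\lvert\gamma^i_x\rvert\bigr\rvert$ using the lower bound of Lemma~\ref{curveregular} and the uniform $C([0,T];C^1)$--bound of Theorem~\ref{uniformcalphac1}, and then exploits $\gamma_{|t=0}=\sigma$ via Theorem~\ref{uniformcalphac1} once more to get $\sup_{t,x}\bigl\lvert\lvert\sigma^i_x(x)\rvert-\lvert\gamma^i_x(t,x)\rvert\bigr\rvert\le T^{\alpha}C(M)$; hence $\lVert f(\gamma)\rVert_{L_p}\le C(\boldsymbol c,M)T^{\alpha}$. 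For the boundary term one first notes $b(\gamma)(0)=\sum_i\sigma^i_x(0)/\lvert\sigma^i_x(0)\rvert=0$, and then, starting from identity~\eqref{identityb} and using $(Dh)(q)q=0$ together with $\sum_i\sigma^i_x(0)/\lvert\sigma^i_x(0)\rvert=0$, one rewrites
\[
b(\gamma)(t)=\sum_{i=1}^3\bigl(g^i(t)-g^i(0)\bigr)\bigl(\gamma^i_x(t,0)-\sigma^i_x(0)\bigr),\qquad g^i(t):=\int_0^1(Dh)\bigl(\xi\gamma^i_x(t,0)+(1-\xi)\sigma^i_x(0)\bigr)\,\mathrm{d}\xi .
\]
The Banach algebra estimate of Proposition~\ref{Banachalgebra}, the bound $\lVert g^i-g^i(0)\rVert_{C([0,T])}\le CT^{\beta}$ coming from $W_p^{\nicefrac{1}{2}-\nicefrac{1}{2p}}\hookrightarrow C^{\beta}$, and the control of $\gamma^i_x(\cdot,0)$ from Lemma~\ref{derivativesol} and Theorem~\ref{uniformcalphac1} (here $\alpha,\beta>0$ are exactly as in the proof of Proposition~\ref{Kcontraction}), then give $\lVert b(\gamma)\rVert_{W_p^{\nicefrac{1}{2}-\nicefrac{1}{2p}}((0,T))}\le C(\boldsymbol c,M)T^{\min\{\alpha,\beta\}}$.

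Putting these together, $\vertiii{K_T(\gamma)}_{\boldsymbol E_T}\le M_1+C(\boldsymbol c,M)T^{\min\{\alpha,\beta\}}$, where $M_1:=C(T_0,p)\lVert\sigma\rVert_{W_p^{2-\nicefrac{2}{p}}((0,1);(\mathbb R^n)^3)}$ is \emph{independent of $M$ and $T$}. The argument then closes in the expected order: first fix $M:=M_1+1$ (which depends only on $p$, $T_0$ and $\lVert\sigma\rVert_{W_p^{2-\nicefrac{2}{p}}}$); with $M$ pinned down, $C(\boldsymbol c,M)$ is a definite constant, and choosing $\widehat T(\boldsymbol c,M)\in(0,\widetilde T(\boldsymbol c,M)]$ so small that $C(\boldsymbol c,M)\,\widehat T(\boldsymbol c,M)^{\min\{\alpha,\beta\}}\le 1$ gives $\vertiii{K_T(\gamma)}_{\boldsymbol E_T}\le M_1+1=M$ for all $T\in(0,\widehat T(\boldsymbol c,M)]$ and all $\gamma\in\mathbb E_T^\sigma\cap\overline{B_M}$; since $K_T(\gamma)$ already lies in $\mathbb E_T^\sigma$ by the Corollary preceding Proposition~\ref{Kcontraction}, this is the claim. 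The only genuine subtlety is this order of quantifiers: $M$ must be extracted from an $M$--independent bound \emph{before} one invokes Lemma~\ref{curveregular} and the perturbative estimates, whose constants and whose time threshold $\widetilde T$ both depend on $M$. The analytic content is a verbatim repetition of the estimates already carried out for the contraction property, so I expect no new obstacle.
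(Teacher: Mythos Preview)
Your argument is correct. It differs from the paper's proof in the way the self--mapping bound is organised, and the difference is worth recording.

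The paper fixes $T_0=1$, introduces the auxiliary element $\mathcal{L}\sigma:=L_1^{-1}(0,\sigma(1),0,0,\sigma)\in\mathbb{E}_1^\sigma$, and defines $M$ as (twice) the maximum of $\vertiii{\mathcal{L}\sigma}_{\boldsymbol{E}_1}$ and $\vertiii{N_1(\mathcal{L}\sigma)}_{\mathbb{F}_1}$ (weighted by the uniform bound on $L_T^{-1}$). One then writes $K_T(\gamma)=K_T(\mathcal{L}\sigma)+\bigl(K_T(\gamma)-K_T(\mathcal{L}\sigma)\bigr)$: the first term is $\le M/2$ by the very definition of $M$, and the second is $\le C(\boldsymbol c,M)T^\beta\,2M$ by a \emph{direct} application of the contraction estimate of Proposition~\ref{Kcontraction} with $\widetilde\gamma=\mathcal{L}\sigma$. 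Choosing $\widehat T$ so that the latter is $\le M/2$ finishes the proof.

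You instead estimate $\vertiii{N_T(\gamma)}_{\mathbb{F}_T}$ directly: the two ``data'' components of $N_T(\gamma)$ are literally $\sigma(1)$ and $\sigma$, hence bounded by $C(T_0,p)\lVert\sigma\rVert_{W_p^{2-\nicefrac{2}{p}}}$ independently of $M$, while the two nonlinear components $f(\gamma)$ and $b(\gamma)$ vanish at $t=0$ and hence are $O(T^{\min\{\alpha,\beta\}})$ by rerunning the estimates of Proposition~\ref{Kcontraction} with $\widetilde\gamma$ replaced by the constant--in--time datum $\sigma$. Your identity for $b(\gamma)$ (using $(Dh)(q)q=0$ and the admissibility condition $\sum_i\sigma^i_x(0)/|\sigma^i_x(0)|=0$) is correct and gives exactly the same product structure as in the contraction proof. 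The order of quantifiers you flag is the genuine point, and you handle it correctly.

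What each approach buys: the paper's version is slightly shorter because Proposition~\ref{Kcontraction} can be quoted as a black box once $\mathcal{L}\sigma$ sits in $\overline{B_M}$; your version is more elementary in that it avoids the auxiliary element $\mathcal{L}\sigma$ altogether and makes the dependence of $M$ on $\lVert\sigma\rVert_{W_p^{2-\nicefrac{2}{p}}}$ (and on nothing else) completely explicit, which is in fact what the statement of the proposition asserts.
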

\begin{proof}
	We let $T_0= 1$ and define
	\begin{equation*}
	M:=2\max\left\{\sup_{T\in(0,1]}\vertiii{L_T^{-1}}_{\mathcal{L}\left(\mathbb{F}_T,\mathbb{E}_T\right)},1\right\}\max\left\{\vertiii{\mathcal{L}\sigma}_{\boldsymbol{E}_1},\vertiii{\left(N_{1}^1(\mathcal{L}\sigma),\sigma(1),0,N_{1}^2(\mathcal{L}\sigma),\sigma\right)}_{\mathbb{F}_1}\right\}
	\end{equation*}	
	where $\mathcal{L}\sigma:=L_1^{-1}\left(0,\sigma(1),0,0,\sigma\right)$ denotes the extension defined in Lemma~\ref{trace} with $T=1$.
	In particular, $\mathcal{L}\sigma$ lies in $\mathbb{E}_T^\sigma\cap\overline{B_M}$ for all $T\in (0,1]$. Moreover, for all $T\in (0,1]$ we have 
	\begin{equation*}
	\vertiii{K_T\left(\mathcal{L}\sigma\right)}_{\boldsymbol{E}_T}\leq \sup_{T\in(0,1]}\vertiii{L_T^{-1}}_{\mathcal{L}\left(\mathbb{F}_T,\mathbb{E}_T\right)}\vertiii{\left(N_{1}^1(\mathcal{L}\sigma),\sigma(1),0,N_{1}^2(\mathcal{L}\sigma),\sigma\right)}_{\mathbb{F}_T}\leq \nicefrac{M}{2}\,.
	\end{equation*}
	Let $T\left(\boldsymbol{c},M\right)$ be the time as in Proposition~\ref{Kcontraction}. Given $T\in (0,T\left(\boldsymbol{c},M\right)]$ and $\gamma\in \mathbb{E}_T^\sigma\cap\overline{B_M}$ we observe that for some $\beta\in(0,1)$,
	\begin{equation*}
	\vertiii{ K_T\left(\gamma\right)-K_T\left(\mathcal{L}\sigma\right)}_{\boldsymbol{E}_T}\leq C\left(\boldsymbol{c},M\right)T^\beta\vertiii{\gamma-\mathcal{L}\sigma}_{\boldsymbol{E}_T}\leq C\left(\boldsymbol{c},M\right)T^\beta 2M\,.
	\end{equation*}
	We choose a time $\widehat{T}(\boldsymbol{c},M)\in (0,T\left(\boldsymbol{c},M\right)]$ so small that for all $T\in (0,\widehat{T}(\boldsymbol{c},M)]$ it holds $C\left(\boldsymbol{c},M\right)T^\beta 2M\leq \nicefrac{M}{2}$. Finally, we conclude for all $T\in (0,\widehat{T}(\boldsymbol{c},M)]$ and $\gamma\in \mathbb{E}_T^\sigma\cap\overline{B_M}$,
	\begin{equation*}
	\vertiii{K_T(\gamma)}_{\mathbb{E}_T}\leq \vertiii{K_T(\gamma)-K_T(\mathcal{L}\sigma)}_{\mathbb{E}_T}+\vertiii{K_T(\mathcal{L}\sigma)}_{\mathbb{E}_T}\leq\nicefrac{M}{2}+\nicefrac{M}{2}= M\,.
	\end{equation*}
\end{proof}
\begin{thm}\label{short time existence}
	Let $p\in(3,\infty)$ and $\sigma$ be an admissible initial parametrisation. There exists a positive time $\widetilde{T}\left(\sigma\right)$ depending on $\min_{i\in\{1,2,3\},x\in[0,1]}\vert\sigma^i_x(x)\vert$ and $\left\lVert\sigma\right\rVert_{W_p^{2-\nicefrac{2}{p}}\left((0,1);(\mathbb{R}^n)^3\right)}$ such that for all $\boldsymbol{T}\in (0,\widetilde{T}(\sigma)]$ the system~\eqref{problema} has a solution $\mathcal{E}\sigma$ in 
	\begin{equation*}
	\boldsymbol{E}_{\boldsymbol{T}}=W_p^{1}\left((0,\boldsymbol{T});L_p\left((0,1);(\mathbb{R}^n)^3\right)\right)\cap L_p\left((0,\boldsymbol{T});W_p^2\left((0,1);(\mathbb{R}^n)^3\right)\right)
	\end{equation*}
	which is unique in $\boldsymbol{E}_{\boldsymbol{T}}\cap\overline{B_M}$ with
	\begin{equation*}
	M:=2\max\left\{\sup_{T\in(0,1]}\vertiii{L_T^{-1}}_{\mathcal{L}\left(\mathbb{F}_T,\mathbb{E}_T\right)},1\right\}\max\left\{\vertiii{\mathcal{L}\sigma}_{\boldsymbol{E}_1},\vertiii{\left(N_{1}^1(\mathcal{L}\sigma),\sigma(1),0,N_{1}^2(\mathcal{L}\sigma),\sigma\right)}_{\mathbb{F}_1}\right\}
	\end{equation*}
	where $\mathcal{L}\sigma:=L_1^{-1}\left(0,\sigma(1),0,0,\sigma\right)$ denotes the extension defined in Lemma~\ref{trace} with $T=1$.
\end{thm}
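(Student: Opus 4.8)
The plan is to read off Theorem~\ref{short time existence} from the Banach Fixed Point Theorem applied to the operator $K_T=L_T^{-1}N_T$, all the analytic work having already been done in Lemma~\ref{trace}, Lemma~\ref{curveregular}, Proposition~\ref{Kcontraction} and Proposition~\ref{selfmapping}. First I would fix $\boldsymbol{c}=\frac12\min_{i,x}\vert\sigma^i_x(x)\vert$ as in Lemma~\ref{curveregular}, take the radius $M$ delivered by Proposition~\ref{selfmapping} (which by construction depends only on $\boldsymbol{c}$ and $\left\lVert\sigma\right\rVert_{W_p^{2-\nicefrac{2}{p}}}$), and set $\widetilde{T}(\sigma):=\widehat{T}(\boldsymbol{c},M)$. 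Since $\widehat{T}(\boldsymbol{c},M)\leq T(\boldsymbol{c},M)\leq\widetilde{T}(\boldsymbol{c},M)$, for every $\boldsymbol{T}\in(0,\widetilde{T}(\sigma)]$ the map $K_{\boldsymbol{T}}$ is simultaneously a self-map and a contraction of $\mathbb{E}_{\boldsymbol{T}}^\sigma\cap\overline{B_M}$.

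Next I would observe that $\mathbb{E}_{\boldsymbol{T}}^\sigma\cap\overline{B_M}$ is a non-empty complete metric space: it is a closed subset of the Banach space $\boldsymbol{E}_{\boldsymbol{T}}$, because the concurrency, fixed-endpoint and initial-datum constraints are stable under $\boldsymbol{E}_{\boldsymbol{T}}$-convergence by continuity of the spatial and temporal traces, and it contains the restriction to $(0,\boldsymbol{T})$ of $\mathcal{L}\sigma=L_1^{-1}(0,\sigma(1),0,0,\sigma)$ by Lemma~\ref{trace} and the choice of $M$. The Banach Fixed Point Theorem then yields a unique fixed point $\mathcal{E}\sigma\in\mathbb{E}_{\boldsymbol{T}}^\sigma\cap\overline{B_M}$, and unwinding $L_{\boldsymbol{T}}(\mathcal{E}\sigma)=N_{\boldsymbol{T}}(\mathcal{E}\sigma)$ component by component recovers exactly the five lines of~\eqref{problema}: the first component, after rearranging $\gamma^i_t-\frac{\gamma^i_{xx}}{\vert\sigma^i_x\vert^2}=\bigl(\frac{1}{\vert\gamma^i_x\vert^2}-\frac{1}{\vert\sigma^i_x\vert^2}\bigr)\gamma^i_{xx}$, gives the Special Flow equation; the fourth component together with the definition of $N^2_{\boldsymbol{T}}$ in~\eqref{defpsi} gives the angle condition $\sum_{i}\frac{(\mathcal{E}\sigma)^i_x(\cdot,0)}{\vert(\mathcal{E}\sigma)^i_x(\cdot,0)\vert}=0$; and the remaining components give the endpoint, concurrency and initial conditions. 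Moreover Lemma~\ref{curveregular} guarantees $\vert(\mathcal{E}\sigma)^i_x\vert\geq\boldsymbol{c}>0$, so $\mathcal{E}\sigma$ is a solution in the sense of Definition~\ref{analyticproblem}. The assertion for all $\boldsymbol{T}\in(0,\widetilde T(\sigma)]$ follows by running the argument on each such interval (or by restriction, noting the triple norm does not increase under restriction).

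For uniqueness I would take an arbitrary solution $\gamma\in\boldsymbol{E}_{\boldsymbol{T}}\cap\overline{B_M}$ of~\eqref{problema} and show it must coincide with $\mathcal{E}\sigma$: the concurrency, endpoint and initial conditions built into~\eqref{problema} place $\gamma$ in $\mathbb{E}_{\boldsymbol{T}}^\sigma$; since $\boldsymbol{T}\leq\widetilde T(\boldsymbol{c},M)$, Lemma~\ref{curveregular} upgrades $\vert\gamma^i_x\vert\neq0$ to $\vert\gamma^i_x\vert\geq\boldsymbol{c}$, so that $N_{\boldsymbol{T}}(\gamma)$ is defined; and reading the componentwise computation above backwards shows that~\eqref{problema} is equivalent to $L_{\boldsymbol{T}}\gamma=N_{\boldsymbol{T}}\gamma$, i.e.\ $\gamma=K_{\boldsymbol{T}}\gamma$, so that $\gamma=\mathcal{E}\sigma$ by uniqueness of the fixed point on $\mathbb{E}_{\boldsymbol{T}}^\sigma\cap\overline{B_M}$.

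I do not anticipate a real obstacle, since all the estimates are already in place; the point that needs care is precisely this uniqueness step, where one must verify that a solution of~\eqref{problema} merely known to lie in the ball of $\boldsymbol{E}_{\boldsymbol{T}}$ automatically belongs to the space $\mathbb{E}_{\boldsymbol{T}}^\sigma$ on which $K_{\boldsymbol{T}}$ acts, and --- more subtly --- that its speed is bounded below by $\boldsymbol{c}$ and not merely nonzero, which is exactly what Lemma~\ref{curveregular} together with the boundary and initial conditions of~\eqref{problema} provides. The only other thing to track is the dependence of $\widetilde T(\sigma)$ on $\min_{i,x}\vert\sigma^i_x(x)\vert$ and $\left\lVert\sigma\right\rVert_{W_p^{2-\nicefrac{2}{p}}}$ alone, which is transparent from how $\boldsymbol{c}$ is read off from $\min_{i,x}\vert\sigma^i_x(x)\vert$, $M$ from $\boldsymbol{c}$ and $\left\lVert\sigma\right\rVert_{W_p^{2-\nicefrac{2}{p}}}$, and $\widehat T(\boldsymbol{c},M)$ from $\boldsymbol{c}$ and $M$.
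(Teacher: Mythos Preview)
Your proposal is correct and follows essentially the same route as the paper: fix $M$ and $\widehat{T}(\boldsymbol{c},M)$ from Proposition~\ref{selfmapping}, set $\widetilde{T}(\sigma):=\widehat{T}(\boldsymbol{c},M)$, and apply the Banach Fixed Point Theorem to $K_{\boldsymbol{T}}$ on the complete metric space $\mathbb{E}_{\boldsymbol{T}}^\sigma\cap\overline{B_M}$, noting that fixed points of $K_{\boldsymbol{T}}$ are exactly the solutions of~\eqref{problema} in $\boldsymbol{E}_{\boldsymbol{T}}\cap\overline{B_M}$. Your write-up is in fact more explicit than the paper's, which compresses the identification of fixed points with solutions and the uniqueness verification into a single sentence.
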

\begin{proof}
	Let $M$ and $\widehat{T}\left(\boldsymbol{c},M\right)$ be as in Proposition~\ref{selfmapping} and let $\boldsymbol{T}\in(0,\widehat{T}\left(\boldsymbol{c},M\right)]$. The fixed points of the mapping $K_{\boldsymbol{T}}$ in $\mathbb{E}_{\boldsymbol{T}}^\sigma\cap\overline{B_M}$
	are precisely the solutions of the system~\eqref{problema} in the space $\boldsymbol{E}_{\boldsymbol{T}}\cap\overline{B_M}$. As $K_{\boldsymbol{T}}$ is a contraction of the complete metric space $\mathbb{E}_{\boldsymbol{T}}^\sigma\cap\overline{B_M}$, existence and uniqueness of a solution follow from the Contraction Mapping Principle.
\end{proof}

\begin{proof}[Proof of Theorem~\ref{existenceanalyticprob}]
	This follows from Theorem~\ref{short time existence} where the appropriate time $\boldsymbol{T}$ and radius $M$ are specified.
\end{proof}

\subsection{Existence and uniqueness of solutions to the motion by curvature}\label{realexistence}

Now that we obtained existence and uniqueness of solutions
to the Special Flow~\eqref{problema} we can come back to our geometric problem.

\begin{thm}[Local existence of the motion by curvature]\label{existencegeopro}
Let $p\in(3,\infty)$ and $\mathbb{T}_0$ be a
geometrically admissible initial Triod. 
Then
there exists $T>0$ such that there exists 
a solution to the motion by curvature in $[0,T]$ with initial datum $\mathbb{T}_0$ as defined in Definition~\ref{geosolution} 
which can be described by one parametrisation
in the whole time interval $[0,T]$.
\end{thm}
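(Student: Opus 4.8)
The plan is to deduce the statement from the already established well-posedness of the Special Flow, the point being that a solution of~\eqref{problema} is nothing but a particular (tangentially reparametrised) solution of~\eqref{systemtriod}. Since $\mathbb{T}_0$ is a geometrically admissible initial Triod (Definition~\ref{geomadm}), it admits an admissible initial parametrisation $\sigma=(\sigma^1,\sigma^2,\sigma^3)$, i.e.\ a triple of regular curves of class $W^{2-\nicefrac{2}{p}}_p$ with $\bigcup_i\sigma^i([0,1])=\mathbb{T}_0$, concurrent at $x=0$, with $\sigma^i(1)=P^i$, and satisfying $\sum_i\sigma^i_x(0)/|\sigma^i_x(0)|=0$. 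Applying Theorem~\ref{short time existence} (equivalently Theorem~\ref{existenceanalyticprob}) to this $\sigma$ we obtain a time $T>0$ and a solution $\gamma=\mathcal{E}\sigma=(\gamma^1,\gamma^2,\gamma^3)\in\boldsymbol{E}_T$ of~\eqref{problema} with $|\gamma^i_x(t,x)|\ge\boldsymbol{c}>0$ on $[0,T]\times[0,1]$ by Lemma~\ref{curveregular}. By Theorem~\ref{embeddingBUC}, $\gamma$ has a representative in $C\left([0,T];C^{1+\alpha}([0,1];(\mathbb{R}^n)^3)\right)$ with $\gamma(0)=\sigma$, so for every $t\in[0,T]$ the triple $\gamma(t)$ consists of three regular $C^1$-curves, which remain a genuine Triod for short time by $C^1$-continuity from $\mathbb{T}_0$.

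Next I would set $\mathbb{T}(t):=\bigcup_{i=1}^3\gamma^i(t,[0,1])$ and check that $(\mathbb{T}(t))_{t\in[0,T]}$, parametrised by the single map $\gamma$ (so $N=0$, $I_0=(0,T)$ in Definition~\ref{geosolution}), satisfies~\eqref{systemtriod} for a.e.\ $(t,x)$. The concurrency and fixed endpoint conditions of~\eqref{problema} are literally those of~\eqref{systemtriod}; the angle condition $\sum_i\gamma^i_x(t,0)/|\gamma^i_x(t,0)|=\sum_i\tau^i(t,0)=0$ of~\eqref{problema} is exactly the angle condition of~\eqref{systemtriod}, and three unit vectors summing to zero meet at $O(t):=\gamma^1(t,0)$ forming equal angles, so each $\mathbb{T}(t)$ is a regular Triod in the sense of Definition~\ref{Triod}. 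For the evolution equation, the choice of tangential velocity made in Section~\ref{preliminaries} (recalled before Definition~\ref{analyticproblem}) gives, for every solution of~\eqref{problema},
\begin{equation*}
\gamma^i_t=\frac{\gamma^i_{xx}}{|\gamma^i_x|^2}=\boldsymbol{\kappa}^i+\Big\langle\frac{\gamma^i_{xx}}{|\gamma^i_x|^2},\tau^i\Big\rangle\tau^i\,,
\end{equation*}
and since $\boldsymbol{\kappa}^i$ is normal while the last term is tangential, applying the normal projection $\boldsymbol{P}^i=\mathrm{Id}-\tau^i\otimes\tau^i$ yields $\boldsymbol{P}^i\gamma^i_t=\boldsymbol{\kappa}^i$, the motion by curvature equation. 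Finally $\gamma^i(0,\cdot)=\sigma^i$ gives $\gamma(0,[0,1])=\bigcup_i\sigma^i([0,1])=\mathbb{T}_0$, so the initial datum is attained.

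This shows that $(\mathbb{T}(t))_{t\in[0,T]}$ is a solution to the motion by curvature with initial datum $\mathbb{T}_0$ in the sense of Definition~\ref{geosolution}, described by the single parametrisation $\gamma$ on the whole interval $[0,T]$. The proof is essentially a verification rather than a new argument: the substantive inputs are Theorem~\ref{short time existence} and Lemma~\ref{curveregular}, the latter guaranteeing that $|\gamma^i_x|$ does not vanish on $[0,T]\times[0,1]$, so that the unit tangents, the normal projections $\boldsymbol{P}^i$, and the tangential reparametrisation underlying~\eqref{problema} are all well defined. Keeping track of this non-degeneracy is the only real point to watch, and it is already available.
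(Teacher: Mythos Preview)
Your proof is correct and follows essentially the same route as the paper: pick an admissible initial parametrisation $\sigma$ of $\mathbb{T}_0$, apply the short time existence Theorem~\ref{existenceanalyticprob} for the Special Flow, and observe that the resulting $\mathcal{E}\sigma$ parametrises a solution of~\eqref{systemtriod}. The paper records this in two lines, while you spell out explicitly why each item of Definition~\ref{geosolution} is met (regularity of the curves, the identity $\boldsymbol{P}^i\gamma^i_t=\boldsymbol{\kappa}^i$, the boundary and initial conditions); this extra verification is fine but adds nothing new.
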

\begin{proof}
By Definition~\ref{geomadm} the geometrically admissible initial Triod
$\mathbb{T}_0$ admits a parametrisation $\sigma=(\sigma^1,\sigma^2,\sigma^3)$
that is an admissible initial 
parametrisation for the Special Flow.
Theorem~\ref{existenceanalyticprob} implies that there exists $\boldsymbol{T}>0$ and a solution
$\mathcal{E}\sigma\in \boldsymbol{E}_{\boldsymbol{T}}$ to the Special Flow~\eqref{problema} in $[0,\boldsymbol{T}]$
with $(\mathcal{E}\sigma)^i(0)=\sigma^i$.  
Then by Definition~\ref{geosolution} $\mathbb{T}=\bigcup_{i=1}^3(\mathcal{E}\sigma)^i([0,\boldsymbol{T}]\times[0,1])$
is a solution to the motion by curvature in $[0,\boldsymbol{T}]$ with initial datum $\mathbb{T}_0$.
\end{proof}

\begin{lem}[A composition property]\label{stableunderrepara}
	Let $p\in(3,\infty)$, $T$ be positive and 
	\begin{equation*}
	f,g\in L_p\left((0,T);W_p^2\left((0,1)\right)\right)\cap W_p^1\left((0,T);L_p((0,1))\right)
	\end{equation*}
	be such that for every $t\in[0,T]$ the map $g(t,\cdot):[0,1]\to[0,1]$ is a $C^1$--diffeomorphism. Then the map $h(t,x):=f(t,g(t,x))$ lies in $L_p\left((0,T);W_p^2\left((0,1)\right)\right)\cap W_p^1\left((0,T);L_p((0,1))\right)$ and all derivatives can be calculated by the chain rule.
\end{lem}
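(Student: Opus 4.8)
The plan is to reduce everything to the case of a smooth first argument $f$ and then pass the chain rule to the limit, after first recording the regularity that makes the composition meaningful. Since $p>3$, Theorem~\ref{embeddingBUC} gives $f,g\in C([0,T];C^{1+\alpha}([0,1]))$ for some $\alpha>0$, so $f,g,\partial_x f,\partial_x g$ are all jointly continuous on the compact set $[0,T]\times[0,1]$. Because $g(t,\cdot)$ is a $C^1$--diffeomorphism of $[0,1]$ for every $t$, its spatial derivative never vanishes, so by compactness there are constants $0<c_0\leq C_0<\infty$ with $c_0\leq|\partial_x g(t,x)|\leq C_0$ on $[0,T]\times[0,1]$, and the inverse maps $g(t,\cdot)^{-1}$ are $C^1$ with jointly continuous derivative. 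I will use repeatedly that $\Phi(t,x):=(t,g(t,x))$ is a homeomorphism of $[0,T]\times[0,1]$ whose restriction to each time--slice $\{t\}\times[0,1]$ is a $C^1$--diffeomorphism of $[0,1]$: by Fubini's theorem $\Phi$ maps Lebesgue--null sets to Lebesgue--null sets, so $u\mapsto u\circ\Phi$ is a well-defined linear operator on $L_p((0,T)\times(0,1))$, and a slicewise change of variables yields $\|u\circ\Phi\|_{L_p((0,T)\times(0,1))}\leq c_0^{-1/p}\,\|u\|_{L_p((0,T)\times(0,1))}$.

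Next I would pick smooth $f_k\to f$ in $W_p^{1,2}((0,T)\times(0,1))$; by Theorem~\ref{embeddingBUC} this also gives $f_k\to f$ in $C([0,T];C^1([0,1]))$, while $\partial_t f_k\to\partial_t f$ and $\partial_x^2 f_k\to\partial_x^2 f$ in $L_p((0,T)\times(0,1))$. Setting $h_k(t,x):=f_k(t,g(t,x))$ and writing $(\partial_x^j f_k)(\cdot,g)$ for the function $(t,x)\mapsto(\partial_x^j f_k)(t,g(t,x))$ (and similarly for $\partial_t f_k$), I would first check that $h_k\in W_p^{1,2}((0,T)\times(0,1))$ with
\begin{align*}
\partial_x h_k&=(\partial_x f_k)(\cdot,g)\,\partial_x g\,,\\
\partial_x^2 h_k&=(\partial_x^2 f_k)(\cdot,g)\,(\partial_x g)^2+(\partial_x f_k)(\cdot,g)\,\partial_x^2 g\,,\\
\partial_t h_k&=(\partial_t f_k)(\cdot,g)+(\partial_x f_k)(\cdot,g)\,\partial_t g\,;
\end{align*}
the two spatial identities follow, for a.e.\ $t$, from the classical chain rule applied to the composition of the $C^2$--function $f_k(t,\cdot)$ with the $W_p^2$--diffeomorphism $g(t,\cdot)$ (together with measurability and integrability of $t\mapsto\|h_k(t)\|_{W_p^2}$, which follow from these formulas and the bounds above), and the temporal identity is the delicate point discussed below. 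Letting $k\to\infty$: $(\partial_x^j f_k)(\cdot,g)\to(\partial_x^j f)(\cdot,g)$ \emph{uniformly} on $[0,T]\times[0,1]$ for $j\in\{0,1\}$, since $\partial_x^j f_k\to\partial_x^j f$ uniformly and $g$ is fixed, whereas $(\partial_t f_k)(\cdot,g)=(\partial_t f_k)\circ\Phi\to(\partial_t f)\circ\Phi$ and $(\partial_x^2 f_k)(\cdot,g)\to(\partial_x^2 f)(\cdot,g)$ in $L_p((0,T)\times(0,1))$ by the operator bound for $\Phi$. As $\partial_x g\in C([0,T]\times[0,1])$ and $\partial_x^2 g,\partial_t g\in L_p((0,T)\times(0,1))$ are fixed multipliers, $h_k\to h$ and all three right--hand sides converge in $L_p((0,T)\times(0,1))$; since distributional differentiation is closed in $L_p$, this forces $h\in W_p^{1,2}((0,T)\times(0,1))$ with derivatives given by the same formulas, now with $f$ in place of $f_k$. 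Inserting $c_0\leq|\partial_x g|\leq C_0$, the uniform control of $f$ and $\partial_x f$ from Theorem~\ref{embeddingBUC}, and $g\in W_p^{1,2}$ into those formulas finally produces a quantitative estimate $\|h\|_{W_p^{1,2}}\leq C(c_0,C_0)\big(\|f\|_{C([0,T];C^1)}(1+\|g\|_{W_p^{1,2}})+\|f\|_{W_p^{1,2}}\big)<\infty$.

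The step I expect to cost real work is the temporal chain rule for $h_k$, since the substitution $\Phi(t,x)=(t,g(t,x))$ is merely $W_p^1$ — not Lipschitz — in $t$, so no global diffeomorphism--substitution lemma applies. I would instead work with the Bochner difference quotient in $L_p((0,1))$: for a.e.\ $t_0$,
\[
h_k(t_0+\varepsilon,x)-h_k(t_0,x)=\int_{t_0}^{t_0+\varepsilon}(\partial_t f_k)\big(s,g(t_0+\varepsilon,x)\big)\,\mathrm{d}s+A_\varepsilon(x)\,w_\varepsilon(x)\,,
\]
where $w_\varepsilon(x):=g(t_0+\varepsilon,x)-g(t_0,x)$ and $A_\varepsilon(x):=\int_0^1(\partial_x f_k)\big(t_0,g(t_0,x)+\theta\,w_\varepsilon(x)\big)\,\mathrm{d}\theta$. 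Dividing by $\varepsilon$ and sending $\varepsilon\to0$: the first term tends to $(\partial_t f_k)(t_0,g(t_0,\cdot))$ uniformly in $x$ and $A_\varepsilon\to(\partial_x f_k)(t_0,g(t_0,\cdot))$ uniformly in $x$, both by continuity of $\partial_t f_k,\partial_x f_k$ and the uniform convergence $g(t_0+\varepsilon,\cdot)\to g(t_0,\cdot)$, while $\varepsilon^{-1}w_\varepsilon\to\partial_t g(t_0,\cdot)$ in $L_p((0,1))$ at every Lebesgue point $t_0$ of $s\mapsto\partial_t g(s)\in L_p((0,1))$ (Lebesgue differentiation for Bochner integrals). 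Hence $\varepsilon^{-1}\big(h_k(t_0+\varepsilon,\cdot)-h_k(t_0,\cdot)\big)$ converges in $L_p((0,1))$ to $(\partial_t f_k)(t_0,g(t_0,\cdot))+(\partial_x f_k)(t_0,g(t_0,\cdot))\,\partial_t g(t_0,\cdot)$ for a.e.\ $t_0$; together with a uniform-in-$\varepsilon$ bound on these difference quotients in $L_p$ on compactly contained subintervals of $(0,T)$ this identifies the weak time derivative of $h_k$ and establishes the temporal formula. The remaining ingredients — density of smooth functions in $W_p^{1,2}((0,T)\times(0,1))$, the a.e.-in-$t$ spatial chain rule for $W_p^2$--compositions, and the measurability of $(t,x)\mapsto(\partial_t f)(t,g(t,x))$ as an element of $L_p$ (again via Fubini and null-set preservation under $\Phi$) — are routine.
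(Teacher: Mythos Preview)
Your argument is correct; the paper itself does not give a self-contained proof but simply defers to \cite[Lemma~5.3]{garckemenzelpludawillmore}, noting only that the embedding of Theorem~\ref{embeddingBUC} is the relevant input. Your approximation-and-limit strategy, built precisely on that embedding (to obtain joint continuity of $f,\partial_x f,g,\partial_x g$ and the uniform lower bound $c_0$ on $|\partial_x g|$), is the natural way to fill in the details and is almost certainly what the cited reference does as well.

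One small remark: the passage from ``$\varepsilon^{-1}\big(h_k(t_0+\varepsilon,\cdot)-h_k(t_0,\cdot)\big)$ converges in $L_p((0,1))$ for a.e.\ $t_0$'' to the identification of the weak time derivative deserves one more line. The uniform-in-$\varepsilon$ domination you allude to is
\[
\big\|\varepsilon^{-1}\big(h_k(t_0+\varepsilon,\cdot)-h_k(t_0,\cdot)\big)\big\|_{L_p((0,1))}\leq \|\partial_t f_k\|_\infty+\|\partial_x f_k\|_\infty\, M\big(\|\partial_t g(\cdot)\|_{L_p}\big)(t_0)\,,
\]
with $M$ the Hardy--Littlewood maximal function, and the maximal inequality then justifies dominated convergence against test functions in $C_0^\infty((0,T)\times(0,1))$. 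With that made explicit, the argument is complete.
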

\begin{proof}
	This can be shown with similar arguments as in~\cite[Lemma 5.3]{garckemenzelpludawillmore} using the embedding in Theorem~\ref{embeddingBUC}.
\end{proof}

\begin{thm}[Local uniqueness of the motion by curvature]\label{geouniquenesslocal}
Let $p\in(3,\infty)$, $T, \widetilde{T}>0$, $\mathbb{T}_0$ be a geometrically admissible initial Triod
	and $(\mathbb{T}(t))$, $(\widetilde{\mathbb{T}}(t))$ be two solutions
	to the motion by curvature
	with initial datum $\mathbb{T}_0$
	in $[0,T]$ and $[0,\widetilde{T}]$, respectively, as defined in Definition~\ref{geosolution}.
	Then there exists a positive time $\widehat{T}\leq \min\{T,\widetilde{T}\}$
	such that
	$\mathbb{T}(t)=\widetilde{\mathbb{T}}(t)$
	for all $t\in [0,\widehat{T}]$. 
\end{thm}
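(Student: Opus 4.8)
The plan is to deduce \emph{geometric} uniqueness from the \emph{functional} uniqueness of the Special Flow~\eqref{problema}. Fix one admissible initial parametrisation $\sigma=(\sigma^1,\sigma^2,\sigma^3)$ of $\mathbb{T}_0$; concretely I would take $\sigma:=\gamma(0,\cdot)$, where $\gamma\in\boldsymbol{E}_{T_1}$ is the parametrisation of $(\mathbb{T}(t))$ with $a_0=0$ as in Definition~\ref{geosolution} (after possibly shrinking $T_1$ and reparametrising the initial curves, $\gamma(0,\cdot)$ is a regular $W^{2-\nicefrac{2}{p}}_p$ parametrisation of the regular Triod $\mathbb{T}_0$, hence an admissible initial parametrisation), and let $\mathcal{E}\sigma$ be the solution of~\eqref{problema} with datum $\sigma$ provided by Theorem~\ref{existenceanalyticprob}. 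The core claim is: \emph{every} solution of the motion by curvature with initial datum $\mathbb{T}_0$ is, on a sufficiently small time interval, the family of sets generated by $\mathcal{E}\sigma$. Applying this to $(\mathbb{T}(t))$ and to $(\widetilde{\mathbb{T}}(t))$ with the \emph{same} $\sigma$ and intersecting the two time intervals then yields $\mathbb{T}(t)=\widetilde{\mathbb{T}}(t)$ for all $t$ in a common $[0,\widehat T]$ (at $t=0$ this is just $\mathbb{T}_0=\mathbb{T}_0$).

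To prove the claim for $(\mathbb{T}(t))$, I would look for reparametrisations: for each $i$ a function $\varphi^i:[0,T_2]\times[0,1]\to[0,1]$ with $\varphi^i(t,\cdot)$ a diffeomorphism fixing $0$ and $1$, such that $\widehat\gamma^i(t,x):=\gamma^i(t,\varphi^i(t,x))$ solves the Special Flow. Differentiating and using that $\gamma$ satisfies the motion by curvature equation $\boldsymbol{P}^i\gamma^i_t=\boldsymbol{\kappa}^i$, i.e. $\gamma^i_t=\frac{\gamma^i_{xx}}{|\gamma^i_x|^2}+\mu^i\tau^i$ with $\mu^i:=\langle\gamma^i_t-\frac{\gamma^i_{xx}}{|\gamma^i_x|^2},\tau^i\rangle$, a short computation (projecting the resulting identity onto $\tau^i$, since $\gamma^i_x\parallel\tau^i$) shows that $\widehat\gamma^i$ solves~\eqref{problema} precisely when $\varphi^i$ solves the scalar quasilinear parabolic problem
\begin{equation*}
\varphi^i_t=\frac{\varphi^i_{xx}}{|\gamma^i_x(t,\varphi^i)|^2(\varphi^i_x)^2}-\frac{\mu^i(t,\varphi^i)}{|\gamma^i_x(t,\varphi^i)|}\,,\qquad\varphi^i(t,0)=0\,,\ \varphi^i(t,1)=1\,,\ \varphi^i(0,\cdot)=\mathrm{id}\,,
\end{equation*}
and one checks directly that the concurrency, fixed-endpoint and angle conditions for $\widehat\gamma$ follow from those of $\gamma$. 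Here $\gamma\in\boldsymbol{E}_{T_1}\hookrightarrow C([0,T_1];C^{1+\alpha})$ is regular, so the coefficient $|\gamma^i_x(t,\varphi^i)|^{-2}(\varphi^i_x)^{-2}$ is under control as long as $\varphi^i_x$ stays close to $1$, while the forcing $\mu^i(t,\varphi^i)$ lies only in $L_p$ — exactly the regularity of the datum $f$ in the linearised system~\eqref{linsys}. The scalar problem (Dirichlet conditions, so parabolicity and the Lopatinskii--Shapiro/complementary conditions are trivial) can therefore be solved on a short interval by linearising around $\mathrm{id}$ and repeating verbatim the contraction argument of Section~\ref{sectionspecialflow}, the composition terms being treated as in Lemma~\ref{stableunderrepara}; this yields $\varphi^i\in W^{1,2}_p((0,T_2)\times(0,1))$, hence $\varphi^i(t,\cdot)$ a $C^1$--diffeomorphism for $T_2$ small. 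By Lemma~\ref{stableunderrepara}, $\widehat\gamma\in\boldsymbol{E}_{T_2}$ and it solves the Special Flow with datum $\widehat\gamma(0,\cdot)=\gamma(0,\cdot)=\sigma$; since $\vertiii{\widehat\gamma}_{\boldsymbol{E}_{T}}\to\|\sigma\|_{W^{2-\nicefrac{2}{p}}_p}<M$ as $T\to0$, for $T_2$ small the uniqueness part of Theorem~\ref{short time existence} applies (with $M=M(\sigma)$) and forces $\widehat\gamma=\mathcal{E}\sigma$ on $[0,\widehat T]$. Consequently $\mathbb{T}(t)=\bigcup_{i}\gamma^i(t,[0,1])=\bigcup_{i}\gamma^i(t,\varphi^i(t,[0,1]))=\bigcup_{i}(\mathcal{E}\sigma)^i(t,[0,1])$ for $t\in[0,\widehat T]$.

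For $(\widetilde{\mathbb{T}}(t))$ I would repeat this with its parametrisation $\widetilde\gamma$ and initial parametrisation $\widetilde\sigma:=\widetilde\gamma(0,\cdot)$, but now imposing the initial condition $\varphi^i(0,\cdot)=\psi^i$ where $\psi^i:[0,1]\to[0,1]$ is the (orientation-preserving, endpoint-fixing) diffeomorphism with $\widetilde\sigma^i\circ\psi^i=\sigma^i$; a short Banach-algebra argument (Proposition~\ref{Banachalgebra}, writing $(\psi^i)_x=|\widetilde\sigma^i_x|/(|\sigma^i_x|\circ(\psi^i)^{-1})$ and using a bi-Lipschitz change of variables in the Slobodeckij seminorm) shows $\psi^i\in W^{2-\nicefrac{2}{p}}_p((0,1))$, so $\psi^i$ is an admissible initial value for the scalar problem. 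The reparametrised curve $\widehat{\widetilde\gamma}$ then solves the Special Flow with datum $\widetilde\gamma(0,\psi^i(\cdot))=\widetilde\sigma^i\circ\psi^i=\sigma^i$, i.e. again with datum $\sigma$, so $\widehat{\widetilde\gamma}=\mathcal{E}\sigma$ on a possibly smaller interval and $\widetilde{\mathbb{T}}(t)=\bigcup_{i}(\mathcal{E}\sigma)^i(t,[0,1])$ there. This completes the proof.

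The main obstacle is the construction of the reparametrisations $\varphi^i$: one has to solve the scalar quasilinear parabolic problem above with coefficients assembled from $\gamma\in\boldsymbol{E}_{T_1}$ (hence only $C^{1+\alpha}$ in space and Hölder in time) and an $L_p$ forcing term, and then verify that the solution is genuinely a diffeomorphism and that Lemma~\ref{stableunderrepara} returns $\widehat\gamma$ to $\boldsymbol{E}_{T_2}$. All the estimates involved are of the same nature as those already carried out for the Special Flow in Section~\ref{sectionspecialflow}, so the difficulty is technical rather than conceptual; the only genuinely new (and easy) points are the derivation of the $\varphi^i$--equation and the regularity $\psi^i\in W^{2-\nicefrac{2}{p}}_p$ of the transition maps between two $W^{2-\nicefrac{2}{p}}_p$ parametrisations of $\mathbb{T}_0$.
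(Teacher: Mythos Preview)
Your overall strategy is the paper's: reduce geometric uniqueness to the functional uniqueness of the Special Flow by reparametrising an arbitrary solution $\gamma$ into a solution $\widehat\gamma$ of~\eqref{problema} with the fixed initial datum $\sigma$, and then invoke Theorem~\ref{short time existence}. The derivation of the scalar equation for $\varphi^i$, the handling of the transition diffeomorphisms $\psi^i\in W^{2-\nicefrac{2}{p}}_p$, and the final $\vertiii{\widehat\gamma}_{\boldsymbol{E}_T}\to\lVert\sigma\rVert$ argument are all fine.

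There is, however, a genuine gap in your construction of $\varphi^i$. The forcing term $\mu^i(t,\varphi^i(t,x))$ (and, via $\mu^i$, also $\gamma^i_{xx}(t,\varphi^i)$ and $\gamma^i_t(t,\varphi^i)$) is the composition of a function that is merely in $L_p\left((0,T)\times(0,1)\right)$ with the unknown diffeomorphism. Lemma~\ref{stableunderrepara} does not apply here, since it requires both factors to lie in $W^{1,2}_p$; more to the point, a contraction argument in the style of Section~\ref{sectionspecialflow} needs a Lipschitz estimate of the form $\lVert\mu^i(\cdot,\varphi^i_1)-\mu^i(\cdot,\varphi^i_2)\rVert_{L_p}\leq C\lVert\varphi^i_1-\varphi^i_2\rVert$, and no such bound is available for a generic $L_p$ function $\mu^i$. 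Your remark that ``the forcing lies only in $L_p$ --- exactly the regularity of $f$ in~\eqref{linsys}'' overlooks that in~\eqref{linsys} the datum $f$ is \emph{fixed}, whereas here it depends on the unknown through the composition.

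The paper closes this gap in Lemma~\ref{existencerepara} by passing to the inverse diffeomorphisms $\xi^i(t):=\varphi^i(t)^{-1}$. A chain-rule computation turns your $\varphi^i$-equation into
\begin{equation*}
\xi^i_t(t,y)=\frac{\xi^i_{yy}(t,y)}{\lvert\gamma^i_x(t,y)\rvert^2}+a^i(t,y)\,\xi^i_y(t,y)\,,
\end{equation*}
where $a^i$ is assembled from $\gamma^i_t$, $\gamma^i_x$, $\gamma^i_{xx}$ evaluated at the \emph{independent} variable $(t,y)$, with no composition with $\xi^i$. This system is now of the same type as the (linearised) Special Flow---Dirichlet boundary, $L_p$ lower-order coefficient multiplying $\xi^i_y$---and the argument of Section~\ref{sectionspecialflow} goes through directly. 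Once $\xi^i\in W^{1,2}_p$ is obtained, $\varphi^i:=(\xi^i)^{-1}$ has the required regularity and your remaining steps are valid.
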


\begin{proof}
Let $\mathbb{T}_0$ be a geometrically admissible initial Triod with regular parametrisation $\sigma\in W_p^{2-\nicefrac{2}{p}}\left((0,1);(\mathbb{R}^n)^3\right)$. 
Then $\sigma$ is an admissible initial value for the Special Flow~\eqref{problema} and Theorem~\ref{existenceanalyticprob} yields that there exists $\boldsymbol{T}>0$
and a solution 
$\mathcal{E}\sigma=((\mathcal{E}\sigma)^1,(\mathcal{E}\sigma)^2,(\mathcal{E}\sigma)^3)\in\boldsymbol{E}_{\boldsymbol{T}}$ of~\eqref{problema} with initial datum $\sigma$ which is unique in $\boldsymbol{E}_{\boldsymbol{T}}\cap\overline{B_M}$ with $M$ as in Theorem~\ref{short time existence}. In particular, $\mathbb{T}(t):=(\mathcal{E}\sigma)\left(t,[0,1]\right)$ defines a solution to the motion by curvature~\eqref{systemtriod} in $[0,\boldsymbol{T}]$ with initial datum $\mathbb{T}_0$. Suppose that there is another solution $(\widetilde{\mathbb{T}}(t))$ to the motion by curvature in the sense of Definition~\ref{geosolution} with initial datum $\mathbb{T}_0$ in a time interval 
$[0,\widetilde{T}]$ for some positive $\widetilde{T}$. By possibly decreasing the time of existence $\widetilde{T}$ we may assume that there exists one parametrisation $\widetilde{\gamma}\in\boldsymbol{E}_{\widetilde{T}}$ 
for the evolution $(\widetilde{\mathbb{T}}(t))$
in the whole time interval $[0,\widetilde{T}]$.

We show that there exists a family of time dependent diffeomorphisms $\psi^i(t):[0,1]\to[0,1]$ with $t\in[0,\widehat{T}]$ for some $\widehat{T}\leq\min\{\widetilde{T},\boldsymbol{T}\}$ such that the equality
\begin{equation*}
\widetilde{\gamma}^i(t,\psi^i(t,x))=(\mathcal{E}\sigma)^i(t,x)
\end{equation*}
holds in the space $\boldsymbol{E}_{\widehat{T}}$. In order to make use of the uniqueness assertion in Theorem~\ref{existenceanalyticprob} we construct the reparametrisations $\psi=\left(\psi^1,\psi^2,\psi^3\right)$ in such a way that the functions $(t,x)\mapsto \widetilde{\gamma}^i(t,\psi^i(t,x))$ are a solution to the Special Flow in $\boldsymbol{E}_{\widehat{T}}$ with initial datum $\sigma$.  

One easily shows that there exist unique diffeomorphisms $\psi^i_0:[0,1]\to[0,1]$, $i\in\{1,2,3\}$, of regularity $\psi^i_0\in W_p^{2-\nicefrac{2}{p}}\left((0,1);\mathbb{R}\right)$ such that $\psi^i_0(0)=0$, $\psi^i_0(1)=1$ and $\widetilde{\gamma}^i(0,\psi^i_0(x))=\sigma^i(x)$.
Taking into account the special tangential velocity in~\eqref{problema} (formal) differentiation shows that the reparametrisations $\psi^i$ need to satisfy the following boundary value problem:
\begin{equation}\label{systemrepara}
\begin{cases}
\begin{array}{ll}
\psi^i_t(t,x)&=\frac{\psi_{xx}^{i}\left(t,x\right)}{\left|\widetilde{\gamma}_{x}^{i}\left(t,\psi^i(t,x)\right)\right|^{2}\psi^i_x(t,x)^2}-\frac{\left\langle\widetilde{\gamma}_t^i(t,\psi^i(t,x)),\widetilde{\gamma}^i_x(t,\psi^i(t,x))\right\rangle}{\vert\widetilde{\gamma}^i_x(t,\psi^i(t,x))\vert^2}\\
&\,\,\,\,\,+\frac{1}{\left|\widetilde{\gamma}_{x}^{i}\left(t,
	\psi^i(t,x)\right)\right|}
\left\langle\frac{\widetilde{\gamma}_{xx}^{i}\left(t,\psi^i(t,x)\right)}{\left|
	\widetilde{\gamma}_{x}^{i}\left(t,\psi^i(t,x)\right)\right|^{2}}\,,\,
\frac{\widetilde{\gamma}_x^i(t,\psi^i(t,x))}{\left|\widetilde{\gamma}_{x}^{i}\left(t,
	\psi^i(t,x)\right)\right|}\right\rangle\,,
 \\
\psi^i(t,0)&=0\,,
\\
\psi^i(t,1)&=1\,,
\\
\psi^i(0,x)&=\psi_0^i(x)\,.\\
\end{array}
\end{cases}
\end{equation}
Lemma~\ref{existencerepara} yields that there exists a solution
\begin{equation*}
\psi=\left(\psi^1,\psi^2,\psi^3\right)\in W_p^1((0,\widehat{T});L_p((0,1);\mathbb{R}^3))\cap L_p((0,\widehat{T});W_p^2((0,1);\mathbb{R}^3))
\end{equation*}
to system~\eqref{systemrepara} for some $\widehat{T}\leq\min\{\widetilde{T},\boldsymbol{T}\}$ such that $\psi^i(t):[0,1]\to[0,1]$ is a diffeomorphism for every $t\in[0,\widehat{T}]$. Then Lemma~\ref{stableunderrepara} implies that the composition $(t,x)\mapsto\widetilde{\gamma}^i(t,\psi^i(t,x))$ lies in $\boldsymbol{E}_{\widehat{T}}$ and by construction, it is a solution to the Special Flow. We may now argue as in the 
proof of~\cite[Theorem 5.4]{garckemenzelpludawillmore} to obtain that $(t,x)\mapsto(\mathcal{E}\sigma)^i(t,x)$ and $(t,x)\mapsto\widetilde{\gamma}^i(t,\psi^i(t,x))$ coincide in $\boldsymbol{E}_{\widehat{T}}$. In particular, the networks $\mathbb{T}(t)$ and $\widetilde{\mathbb{T}}(t)$ coincide for all $t\in[0,\widehat{T}]$.
\end{proof}

\begin{lem}\label{existencerepara}
	Let $p\in(3,\infty)$, $\psi_0=(\psi_0^1,\psi_0^2,\psi_0^3)\in W_p^{2-\nicefrac{2}{p}}\left((0,1);\mathbb{R}^3\right)$ with $\psi_0^i:[0,1]\to[0,1]$ a diffeomorphism with $\psi_0^i(0)=0$, $\psi_0^i(1)=1$, $\widetilde{T}>0$ and  $\widetilde{\gamma}\in\mathbb{E}_{\widetilde{T}}$ be such that $\widetilde{\gamma}^i_x(x)\neq 0$ for all $x\in[0,1]$. Then there exists a time $\widehat{T}\in (0,\widetilde{T}]$ and a solution 
	\begin{equation*}
	\psi=\left(\psi^1,\psi^2,\psi^3\right)\in W_p^1((0,\widehat{T});L_p((0,1);\mathbb{R}^3))\cap L_p((0,\widehat{T});W_p^2((0,1);\mathbb{R}^3))
	\end{equation*}
	to system~\eqref{systemrepara} such that $\psi^i(t):[0,1]\to[0,1]$ is a diffeomorphism for every $t\in[0,\widehat{T}]$.
\end{lem}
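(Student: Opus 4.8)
The proof follows the linearisation-and-fixed-point scheme of Sections~\ref{linearisedcase}--\ref{sectionspecialflow}, with two simplifications relative to the Special Flow: the system \eqref{systemrepara} \emph{decouples} into three scalar equations, one for each $\psi^i$ and involving only $\widetilde{\gamma}^i$, and it carries the genuinely linear Dirichlet boundary conditions $\psi^i(t,0)=0$, $\psi^i(t,1)=1$, so that no analysis at a triple junction is needed. The real substance lies entirely in the estimates for the nonlinearity, which are delicate because $\widetilde{\gamma}$ is only of class $\boldsymbol{E}_{\widetilde{T}}$.

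First I would fix, for $i\in\{1,2,3\}$, the frozen principal coefficient
\[
a_0^i(x):=\frac{1}{\bigl|\widetilde{\gamma}^i_x\bigl(0,\psi_0^i(x)\bigr)\bigr|^{2}\,\bigl(\psi^i_{0,x}(x)\bigr)^{2}}\,,
\]
which is continuous and uniformly positive on $[0,1]$ since $\widetilde{\gamma}^i_x(0,\cdot)\neq 0$ there and $\psi_0^i$ is a $C^1$--diffeomorphism, and linearise the $i$-th equation of \eqref{systemrepara} around $\psi_0^i$. This gives a scalar second order parabolic equation $\psi^i_t-a_0^i\psi^i_{xx}=f^i$ on $(0,T)\times(0,1)$ with Dirichlet data $0$ at $x=0$ and $1$ at $x=1$ and initial value $\psi_0^i$. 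Since the boundary operators are Dirichlet, parabolicity and the complementary (Lopatinskii--Shapiro) conditions are classical, and the only compatibility condition for $W_p^{1,2}$ regularity in the range $p\in(3,\infty)$ is the zeroth order one, which here reads $\psi_0^i(0)=0$, $\psi_0^i(1)=1$ and holds by hypothesis. Hence by Solonnikov's theory \cite{solonnikov2} (or \cite{Prusssimonett}) the associated linear operator is an isomorphism between the affine subspace of $\boldsymbol{E}_T$ of functions with these initial and boundary traces and the data space $L_p((0,T);L_p((0,1);\mathbb{R}^3))$ (the boundary and initial slots being fixed), and, arguing via extension of the data exactly as in Lemma~\ref{extensionE_T} and Lemma~\ref{Luniformlybounded}, its inverse is bounded uniformly for $T\in(0,T_0]$.

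Next I would introduce the nonlinear map $N_T$ collecting the remaining terms of \eqref{systemrepara}, namely the difference $\bigl(\,|\widetilde{\gamma}^i_x(t,\psi^i)|^{-2}(\psi^i_x)^{-2}-a_0^i\,\bigr)\psi^i_{xx}$ of the principal coefficients together with the two lower order terms $-\,|\widetilde{\gamma}^i_x(t,\psi^i)|^{-2}\langle\widetilde{\gamma}^i_t(t,\psi^i),\widetilde{\gamma}^i_x(t,\psi^i)\rangle$ and $|\widetilde{\gamma}^i_x(t,\psi^i)|^{-4}\langle\widetilde{\gamma}^i_{xx}(t,\psi^i),\widetilde{\gamma}^i_x(t,\psi^i)\rangle$; since the boundary/initial slots are fixed, $N_T$ is a single map into $L_p((0,T);L_p)$ and the Banach algebra estimates of Proposition~\ref{Kcontraction} are not needed. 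One then solves \eqref{systemrepara} by a fixed point argument for $K_T=L_T^{-1}N_T$ on a ball $\mathbb{E}_T^{\psi_0}\cap\overline{B_M}$ for suitable $M$ and small $T$, mirroring Proposition~\ref{Kcontraction}, Proposition~\ref{selfmapping} and Theorem~\ref{short time existence}. As in Lemma~\ref{curveregular}, on a short enough time interval every competitor $\psi\in\overline{B_M}$ keeps $\psi^i_x$ — hence the Jacobian — uniformly bounded above and below, so $\psi^i(t,\cdot)$ stays a $C^1$--diffeomorphism of $[0,1]$, while $\widetilde{\gamma}^i_x(t,\psi^i(t,x))$ stays bounded away from $0$ by uniform continuity of $\widetilde{\gamma}^i_x$ on $[0,\widetilde{T}]\times[0,1]$; the $T^\alpha$--smallness of increments used in the contraction estimates comes, as before, from the uniform embedding $\boldsymbol{E}_T\hookrightarrow C^\alpha([0,T];C^1)$ of Theorem~\ref{uniformcalphac1} together with the common initial datum $\psi_0$.

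The main obstacle is that the lower order terms contain $\widetilde{\gamma}^i_t$ and $\widetilde{\gamma}^i_{xx}$, which for $\widetilde{\gamma}\in\boldsymbol{E}_{\widetilde{T}}$ belong only to $L_p((0,\widetilde{T})\times(0,1))$, evaluated along the \emph{unknown} reparametrisation; such compositions cannot be differenced pointwise. This is handled by the composition property of Lemma~\ref{stableunderrepara}: because each $\psi^i(t,\cdot)$ is a $C^1$--diffeomorphism, $\widetilde{\gamma}^i\circ\psi^i$ lies in $L_p((0,T);W_p^2)\cap W_p^1((0,T);L_p)$ with the chain rule, so the right hand side of \eqref{systemrepara} is a well--defined element of $L_p((0,T);L_p)$; and for the Lipschitz-type estimates one argues by a change of variables in the $L_p$--norm, substituting $y$ for a convex combination of $\psi^i(t,x)$ and $\widetilde{\psi}^i(t,x)$ — whose Jacobian is bounded below — so that $L_p$--norms of the composed functions are controlled by the fixed quantities $\|\widetilde{\gamma}^i_t\|_{L_p}$, $\|\widetilde{\gamma}^i_{xx}\|_{L_p}$, with the gain in smallness supplied by $\|\psi-\widetilde{\psi}\|_{C([0,T]\times[0,1])}\le T^\alpha\vertiii{\psi-\widetilde{\psi}}_{\boldsymbol{E}_T}$. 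For the terms where this does not immediately close a contraction one may replace the Banach fixed point step by an iteration scheme and use that $\boldsymbol{E}_T\hookrightarrow\hookrightarrow C([0,T];C^1)$ is compact, along which the pullbacks of the fixed $L_p$--functions by the converging diffeomorphisms converge in $L_p$; these are precisely the arguments of \cite[Lemma~5.3 and Theorem~5.4]{garckemenzelpludawillmore}. Once the fixed point $\psi=(\psi^1,\psi^2,\psi^3)$ is produced in $W_p^1((0,\widehat{T});L_p((0,1);\mathbb{R}^3))\cap L_p((0,\widehat{T});W_p^2((0,1);\mathbb{R}^3))$ for some $\widehat{T}\in(0,\widetilde{T}]$, it solves \eqref{systemrepara} and, by the regularity step above, each $\psi^i(t,\cdot)$ is a $C^1$--diffeomorphism of $[0,1]$, which is the assertion of the lemma.
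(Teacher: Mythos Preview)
Your approach is genuinely different from the paper's, and the difference matters. The paper does \emph{not} attack system~\eqref{systemrepara} directly. Instead it passes to the \emph{inverse} diffeomorphisms $\xi^i(t):=\psi^i(t)^{-1}$ and derives the transformed system~\eqref{systemreparainverse}
\[
\xi^i_t(t,y)=\frac{\xi^i_{yy}(t,y)}{|\widetilde{\gamma}^i_x(t,y)|^{2}}
+\frac{\langle\widetilde{\gamma}^i_t(t,y),\widetilde{\gamma}^i_x(t,y)\rangle}{|\widetilde{\gamma}^i_x(t,y)|^{2}}\,\xi^i_y(t,y)
-\frac{\xi^i_y(t,y)}{|\widetilde{\gamma}^i_x(t,y)|}\Big\langle\frac{\widetilde{\gamma}^i_{xx}(t,y)}{|\widetilde{\gamma}^i_x(t,y)|^{2}},\frac{\widetilde{\gamma}^i_x(t,y)}{|\widetilde{\gamma}^i_x(t,y)|}\Big\rangle\,,
\]
with Dirichlet data $\xi^i(t,0)=0$, $\xi^i(t,1)=1$ and initial value $(\psi_0^i)^{-1}$. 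The point of this change of variable is exactly the obstacle you identify: in~\eqref{systemreparainverse} the merely--$L_p$ quantities $\widetilde{\gamma}^i_t$ and $\widetilde{\gamma}^i_{xx}$ appear as \emph{fixed} functions of $(t,y)$ and are \emph{not} composed with the unknown. The nonlinearity now sits only in the factors $\xi^i_y$, and the whole right hand side is a sum of a quasilinear principal part plus lower order terms of the form (fixed $L_p$--function)$\times\xi^i_y$. For these a Banach contraction in $\boldsymbol{E}_T$ goes through exactly as in Section~\ref{sectionspecialflow}, using only $\sup|\xi^i_y-\widetilde{\xi}^i_y|\le T^\alpha\vertiii{\xi-\widetilde{\xi}}_{\boldsymbol{E}_T}$; no change of variables in the $L_p$--norm and no compactness argument is needed. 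One then sets $\psi^i(t):=\xi^i(t)^{-1}$ and checks (via \cite[Lemma~5.3]{garckemenzelpludawillmore}) that $\psi$ has the required regularity and solves~\eqref{systemrepara}.

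Your direct route is not obviously salvageable as written. For the contraction step you need to control $\|f(t,\psi(t,\cdot))-f(t,\widetilde{\psi}(t,\cdot))\|_{L_p}$ for $f\in L_p$ by $\|\psi-\widetilde{\psi}\|_{C^0}$, which fails in general (translation in $L_p$ is continuous but not Lipschitz). Your fallback---``replace the Banach fixed point by an iteration scheme and use compactness of $\boldsymbol{E}_T\hookrightarrow C([0,T];C^1)$''---is the right instinct, but it is not a complete argument: a Schauder--type scheme requires compactness of $K_T$ as a map \emph{into} $\boldsymbol{E}_T$, which does not follow from the stated embedding, and a weak/strong limit argument for an iterated sequence still has to identify the limit as a $\boldsymbol{E}_T$--solution. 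The references you invoke do not supply this step for the present nonlinearity. The inverse--diffeomorphism trick sidesteps the issue entirely and is the intended proof.
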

\begin{proof}
	We observe that the right hand side of the motion equation in system~\eqref{systemrepara} contains terms of the form $f^i(t,\psi^i(t,x))$ with $f^i\in L_p\left((0,T);L_p((0,1))\right)$. To remove this dependence it is convenient to consider the associated problem for the inverse diffeomorphisms $\xi=(\xi^1,\xi^2,\xi^3)$ given by $\xi^i(t):=\psi^i(t)^{-1}$. Indeed suppose that $\psi\in W^{1,2}_p((0, \widetilde{T})\times (0,1);\mathbb{R}^3)$
	is a solution to~\eqref{systemrepara} with $\psi^i(t):[0,1]\to[0,1]$ 
	a $C^1$--diffeomorphism. Similar arguments as in~\cite[Lemma 5.3]{garckemenzelpludawillmore} show that also $\xi$ is of class $W^{1,2}_p((0, \widetilde{T})\times (0,1);\mathbb{R}^3)$.
	Moreover, the differentiation rules 
	\begin{align*}
	\xi^i_y(t,y)&=\psi^i_x(t,\xi^i(t,y))^{-1}\,,\\
	\xi^i_{yy}(t,y)&=-\xi^i_y(t,y)^3\psi^i_{xx}(t,\xi^i(t,y))
	\end{align*}
	yield the evolution equation
	\begin{align*}
	\xi^i_t(t,y)
	=&-\psi^i_t(t,\xi^i(t,y))\xi_y^i(t,y)\\
	=&-\frac{\psi_{xx}^{i}\left(t,\xi^i(t,y)\right)}{\left|\widetilde{\gamma}_{x}^{i}\left(t,y\right)\right|^{2}}
	\xi^i_y(t,y)^3
	+\frac{\left\langle\widetilde{\gamma}_t^i(t,y),\widetilde{\gamma}^i_x(t,y)\right\rangle}{\vert\widetilde{\gamma}^i_x(t,y)\vert^2}\xi^i_y(t,y)\\
	&-\frac{\xi^i_y(t,y)}{\left|\widetilde{\gamma}_{x}^{i}\left(t,y\right)\right|}
	\left\langle\frac{\widetilde{\gamma}_{xx}^{i}\left(t,y\right)}{\left|
		\widetilde{\gamma}_{x}^{i}\left(t,y\right)\right|^{2}}\,,\,
	\frac{\widetilde{\gamma}_x^i(t,y)}{\left|\widetilde{\gamma}_{x}^{i}\left(t,y\right)\right|}\right\rangle\,,
	\end{align*}
	and in conclusion the following system for $\xi$:	
	\begin{equation}\label{systemreparainverse}
	\begin{cases}
	\begin{array}{ll}
	\xi^i_t(t,y)&
	=\frac{\xi^i_{yy}(t,y)}{\left|\widetilde{\gamma}_{x}^{i}\left(t,y\right)\right|^{2}}
	+\frac{\left\langle\widetilde{\gamma}_t^i(t,y),\widetilde{\gamma}^i_x(t,y)\right\rangle}{\vert\widetilde{\gamma}^i_x(t,y)\vert^2}\xi^i_y(t,y)-\frac{\xi^i_y(t,y)}{\left|\widetilde{\gamma}_{x}^{i}\left(t,y\right)\right|}
	\left\langle\frac{\widetilde{\gamma}_{xx}^{i}\left(t,y\right)}{\left|
		\widetilde{\gamma}_{x}^{i}\left(t,y\right)\right|^{2}}\,,\,
	\frac{\widetilde{\gamma}_x^i(t,y)}{\left|\widetilde{\gamma}_{x}^{i}\left(t,y\right)\right|}\right\rangle\,,
	\\
	\xi^i(t,0)&=0\,,
	\\
	\xi^i(t,1)&=1\,,
	\\
	\xi^i(0,y)&=(\psi_0^i)^{-1}(y)\\
	\end{array}
	\end{cases}
	\end{equation}
	for all $t\in [0,\widetilde{T}]$, $y\in[0,1]$.
	We observe that the boundary value problem~\eqref{systemreparainverse} has a very similar structure as the Special Flow. Analogous arguments as in the proof of Theorem~\ref{existenceanalyticprob} allow us to conclude that there exists a solution $\xi\in W^{1,2}_p((0,\widehat{T})\times (0,1);(\mathbb{R}^2)^3)$
	to~\eqref{systemreparainverse} with $\widehat{T}\in(0,\widetilde{T}]$ such that for $t\in[0,\widehat{T}]$ the map
	$\xi^i(t):[0,1]\to[0,1]$ is a $C^1$--diffeomorphism. Reversing the above argumentation yields that the inverse functions $\psi^i(t):=\xi^i(t)^{-1}$ solve~\eqref{systemrepara}
	and possess the desired properties.  
\end{proof}

\begin{thm}[Geometric uniqueness of the motion by curvature]\label{geouniqueness}
	Let $p\in(3,\infty)$, $\mathbb{T}_0$ be a geometrically admissible initial Triod and $T$ be positive. 
	Solutions to the motion by curvature in $[0,T]$ with initial datum $\mathbb{T}_0$ are geometrically unique in the sense that given any two solutions $(\mathbb{T}(t))$ and $(\widetilde{\mathbb{T}}(t))$ to the motion by curvature in the time interval $[0,T]$ with initial datum $\mathbb{T}_0$ the networks $\mathbb{T}(t)$ and $\widetilde{\mathbb{T}}(t)$ coincide for all $t\in[0,T]$.
\end{thm}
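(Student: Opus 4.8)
The plan is to upgrade the local statement of Theorem~\ref{geouniquenesslocal} to the full interval $[0,T]$ by a connectedness argument in time. I would set
\[
S:=\left\{t\in[0,T]\colon \mathbb{T}(s)=\widetilde{\mathbb{T}}(s)\ \text{for every}\ s\in[0,t]\right\}\,,
\]
which contains $0$ and is therefore a non-empty subinterval of $[0,T]$ of the form $[0,t^\ast]$ or $[0,t^\ast)$, with $t^\ast:=\sup S$. Since $[0,T]$ is connected it suffices to show that $S$ is relatively closed and relatively open in $[0,T]$, whence $S=[0,T]$, which is the assertion.

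Before that I would isolate two elementary facts. \emph{Time translation}: as system~\eqref{systemtriod} is autonomous, if $(\mathbb{T}(t))_{t\in[0,T]}$ is a solution to the motion by curvature with initial datum $\mathbb{T}_0$ and $t_0\in[0,T)$, then $(\mathbb{T}(t_0+t))_{t\in[0,T-t_0]}$ is a solution with initial datum $\mathbb{T}(t_0)$; one just translates the covering intervals $I_n$ and reuses the same parametrisations. \emph{Interior regularity}: for $t_0\in(0,T]$ there is an index $n$ with $t_0\in\overline{I_n}$, and the corresponding parametrisation belongs to $W^1_p(I_n;L_p((0,1);(\mathbb{R}^n)^3))\cap L_p(I_n;W^2_p((0,1);(\mathbb{R}^n)^3))$, so by Theorem~\ref{embeddingBUC} (applied after translating $I_n$ to an interval starting at $0$) its trace at $t_0$ lies in $W_p^{2-\nicefrac{2}{p}}\left((0,1);(\mathbb{R}^n)^3\right)$. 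This trace is a regular parametrisation of $\mathbb{T}(t_0)$ (being a solution forces $\vert(\gamma_n^i)_x\vert\neq 0$) and it satisfies the concurrency and angle conditions contained in~\eqref{systemtriod}. Hence $\mathbb{T}(t_0)$ is again a geometrically admissible initial Triod in the sense of Definition~\ref{geomadm}.

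For \emph{closedness}, let $t^\ast=\sup S$; note $t^\ast>0$, since Theorem~\ref{geouniquenesslocal} applied directly to $\mathbb{T}_0$ already yields agreement of the two solutions on a small interval $[0,\widehat{T}]\subseteq S$. Choosing parametrisations of $\mathbb{T}$ and of $\widetilde{\mathbb{T}}$ on intervals whose closures contain $t^\ast$, Theorem~\ref{embeddingBUC} makes them continuous up to and including $t^\ast$ as maps with values in $C^1([0,1];(\mathbb{R}^n)^3)$; their images coincide for every $t<t^\ast$ (both equal $\mathbb{T}(t)=\widetilde{\mathbb{T}}(t)$), so passing to the limit $t\nearrow t^\ast$ gives $\mathbb{T}(t^\ast)=\widetilde{\mathbb{T}}(t^\ast)$ and hence $t^\ast\in S$. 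For the \emph{extension step}, suppose towards a contradiction that $t^\ast<T$. By the interior-regularity fact $\mathbb{T}(t^\ast)=\widetilde{\mathbb{T}}(t^\ast)$ is a geometrically admissible initial Triod, and by time translation the families $(\mathbb{T}(t^\ast+t))_t$ and $(\widetilde{\mathbb{T}}(t^\ast+t))_t$ are two solutions to the motion by curvature with this common initial datum on $[0,T-t^\ast]$. Theorem~\ref{geouniquenesslocal} then provides $\widehat{T}>0$ with $\mathbb{T}(t^\ast+t)=\widetilde{\mathbb{T}}(t^\ast+t)$ for all $t\in[0,\widehat{T}]$; together with $t^\ast\in S$ this forces $t^\ast+\widehat{T}\in S$, contradicting $t^\ast=\sup S$. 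Therefore $t^\ast=T$, and by the closedness step $T\in S$, i.e.\ $\mathbb{T}(t)=\widetilde{\mathbb{T}}(t)$ for all $t\in[0,T]$.

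I expect the only point requiring genuine care to be the interior-regularity fact — that a solution evaluated at a positive time is still an admissible datum for the local theory — because Definition~\ref{geosolution} only grants $W_p^{1,2}$-regularity on a covering family of time subintervals, and matching this with the hypotheses of Theorem~\ref{geouniquenesslocal} relies precisely on the embedding $W_p^{1,2}\hookrightarrow C([0,T];W_p^{2-\nicefrac{2}{p}}((0,1)))$ of Theorem~\ref{embeddingBUC}. The remainder is standard connectedness bookkeeping, so I do not anticipate serious obstacles.
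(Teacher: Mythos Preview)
Your argument is correct and is essentially the same connectedness-in-time argument as the paper's: the paper works with the complement set $\mathcal{S}=\{t\in[0,T]:\mathbb{T}(t)\neq\widetilde{\mathbb{T}}(t)\}$, takes $t^*=\inf\mathcal{S}$, and derives the same contradiction from Theorem~\ref{geouniquenesslocal}, while you phrase it dually via the agreement set and $\sup S$. Your write-up is in fact more explicit where the paper is terse---you spell out why $\mathbb{T}(t^\ast)$ is again a geometrically admissible initial Triod (via Theorem~\ref{embeddingBUC}) and why the sets depend continuously on time---so no genuine gap remains.
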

\begin{proof}
	Let $(\mathbb{T}(t))$ and $(\widetilde{\mathbb{T}}(t))$ be two solutions to the motion by curvature in $[0,T]$ with initial datum $\mathbb{T}_0$. Suppose by contradiction that the set
	\begin{equation*}
	\mathcal{S}:=\left\{t\in [0,T]:\mathbb{T}(t)\neq\widetilde{\mathbb{T}}(t)\right\}
	\end{equation*}
	is non-empty and let $t^*:=\inf\mathcal{S}$. As $\mathcal{S}$ is an open subset of $[0,T]$, we have $t^*\in [0,T)$ and $\mathbb{T}(t^*)=\widetilde{\mathbb{T}}(t^*)$. The Triod $\mathbb{T}(t^*)$ is geometrically admissible and both $t\mapsto\mathbb{T}(t^*+t)$ and $t\mapsto\widetilde{\mathbb{T}}(t^*+t)$ are solutions to the motion by curvature in the time interval $[0,T-t^*]$ with initial datum $\mathbb{T}(t^*)$. Theorem~\ref{geouniquenesslocal} yields that there exists a time $\widehat{T}\in(0,T-t^*]$ such that for all $t\in[0,\widehat{T}]$, $\mathbb{T}(t^*+t)=\widetilde{\mathbb{T}}(t^*+t)$ which contradicts the definition of $t^*$.
\end{proof}

\begin{defn}[Maximal solutions to the motion by curvature]\label{maximalsolution}
Let $p\in(3,\infty)$ and $\mathbb{T}_0$ be a geometrically admissible initial network.
A time--dependent family of Triods $\left(\mathbb{T}(t)\right)_{t\in[0,T)}$ with $T\in(0,\infty)\cup\{\infty\}$
is a \textit{maximal solution to
the motion by curvature} in $[0,T)$
with initial datum $\mathbb{T}_0$ 
if it is a solution 
(in the sense of Definition~\ref{geosolution}) 
in $[0,\hat{T}]$ for all $\hat{T}<T$ 
and if there does not exist a solution 
$(\widetilde{\mathbb{T}}(\tau))$ 
to the motion by curvature in the sense of Definition~\ref{geosolution}
in $[0,\widetilde{T}]$ with $\widetilde{T}\geq T$ and such that $\mathbb{T}=\widetilde{\mathbb{T}}$ in $[0,T)$.
In this case the time $T$ is called \textit{maximal time of existence} and is denoted by $T_{max}$.
\end{defn}

\begin{prop}[Existence and uniqueness of maximal solutions]\label{exmax}
Let $p\in(3,\infty)$ and $\mathbb{T}_0$ be a geometrically admissible initial network.
There exists a maximal solution to the motion by curvature with initial datum $\mathbb{T}_0$ which is geometrically unique.
\end{prop}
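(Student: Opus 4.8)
The plan is to construct the maximal solution by exhausting an interval of admissible existence times. First I would introduce
\[
\mathcal{A}:=\{\,T>0:\text{there exists a solution to the motion by curvature in }[0,T]\text{ with initial datum }\mathbb{T}_0\,\},
\]
which is non-empty by the local existence Theorem~\ref{existencegeopro}, and set $T_{\max}:=\sup\mathcal{A}\in(0,\infty]$. The set $\mathcal{A}$ is downward closed: restricting a solution on $[0,T]$ together with its parametrisation data to any $[0,T']$ with $0<T'<T$ yields a solution on $[0,T']$ (the finitely many defining parametrisations restrict to open subintervals which still cover $(0,T')$, and the one with left endpoint $0$ still equals $\mathbb{T}_0$ at time $0$). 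Hence $(0,T_{\max})\subseteq\mathcal{A}$ and $\mathcal{A}$ is an interval.

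Next I would assemble the maximal solution by gluing. Choose a strictly increasing sequence $\widehat{T}_n\nearrow T_{\max}$ with $\widehat{T}_n\in(0,T_{\max})\subseteq\mathcal{A}$ and, for each $n$, a solution on $[0,\widehat{T}_n]$. By the geometric uniqueness Theorem~\ref{geouniqueness}, the solution on $[0,\widehat{T}_n]$ coincides with the restriction of the solution on $[0,\widehat{T}_{n+1}]$, so there is a well-defined family $(\mathbb{T}(t))_{t\in[0,T_{\max})}$ whose restriction to each $[0,\widehat{T}_n]$ is the chosen solution there; this is independent of all choices again by Theorem~\ref{geouniqueness}. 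Given $\widehat{T}<T_{\max}$, pick $n$ with $\widehat{T}<\widehat{T}_n$; then the finite collection of parametrisations witnessing the solution on $[0,\widehat{T}_n]$, restricted to $(0,\widehat{T})$, witnesses that $(\mathbb{T}(t))$ is a solution in $[0,\widehat{T}]$ in the sense of Definition~\ref{geosolution}. Thus the first requirement in Definition~\ref{maximalsolution} is met.

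For maximality I would argue by contradiction. Suppose there is a solution $(\widetilde{\mathbb{T}}(\tau))$ in $[0,\widetilde{T}]$ with $\widetilde{T}\geq T_{\max}$ and $\widetilde{\mathbb{T}}=\mathbb{T}$ on $[0,T_{\max})$. Since $\widetilde{T}\in\mathcal{A}$ we get $\widetilde{T}\leq T_{\max}$, hence $\widetilde{T}=T_{\max}$; in particular $T_{\max}<\infty$ (for $T_{\max}=\infty$ no such $\widetilde{\mathbb{T}}$ exists, solutions in the sense of Definition~\ref{geosolution} living on finite intervals) and there is a solution on the closed interval $[0,T_{\max}]$, so $\mathbb{T}(T_{\max})$ is a geometrically admissible initial Triod — it is a regular Triod by definition of a solution, and its defining parametrisation near $T_{\max}$ has the regularity of Definition~\ref{geosolution}, hence by Theorem~\ref{embeddingBUC} extends continuously up to $t=T_{\max}$ with values in $W_p^{2-\nicefrac{2}{p}}$. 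The contradiction comes from extending this solution past $T_{\max}$: for $t_0$ slightly below $T_{\max}$ one chooses a controlled regular parametrisation $\sigma^{\ast}$ of $\mathbb{T}(t_0)$ (for instance, after invoking Theorem~\ref{geouniqueness} to reduce to a parametrisation coming from the special flow, for which Lemma~\ref{curveregular} gives a quantitative lower bound on $\lvert\gamma_x\rvert$, or one reparametrises by constant speed), so that $\|\sigma^{\ast}\|_{W_p^{2-\nicefrac{2}{p}}}$ stays bounded and $\min_{i,x}\lvert(\sigma^{\ast})^i_x\rvert$ stays bounded away from $0$ as $t_0\nearrow T_{\max}$, using the convergence $\mathbb{T}(t_0)\to\mathbb{T}(T_{\max})$. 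Then the quantification of the existence time in Theorem~\ref{short time existence} provides a restart time $\delta_0>0$ independent of $t_0$, and Theorem~\ref{existencegeopro} yields a solution on $[t_0,t_0+\delta_0]$ with $t_0+\delta_0>T_{\max}$ once $T_{\max}-t_0<\delta_0$. Gluing it to $\mathbb{T}|_{[0,T_{\max}]}$ — the two agree on their overlap by Theorem~\ref{geouniqueness}, and the union of the two finite families of parametrisations is admissible on open overlapping time intervals covering $(0,t_0+\delta_0)$ — produces a solution on $[0,t_0+\delta_0]$ with $t_0+\delta_0>T_{\max}=\sup\mathcal{A}$, a contradiction. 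Hence $(\mathbb{T}(t))_{t\in[0,T_{\max})}$ is a maximal solution.

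Finally, geometric uniqueness of the maximal solution is almost immediate: if $(\mathbb{T}(t))_{t\in[0,T)}$ and $(\widetilde{\mathbb{T}}(t))_{t\in[0,\widetilde{T})}$ are both maximal solutions with datum $\mathbb{T}_0$, then for every $\widehat{T}<\min\{T,\widetilde{T}\}$ both restrict to solutions on $[0,\widehat{T}]$ and therefore coincide there by Theorem~\ref{geouniqueness}; letting $\widehat{T}\nearrow\min\{T,\widetilde{T}\}$ gives $\mathbb{T}(t)=\widetilde{\mathbb{T}}(t)$ for all $t\in[0,\min\{T,\widetilde{T}\})$. If, say, $T<\widetilde{T}$, then $\widetilde{\mathbb{T}}$ restricted to $[0,T]$ is a solution on $[0,T]$ coinciding with $\mathbb{T}$ on $[0,T)$, which contradicts the maximality of $(\mathbb{T}(t))_{t\in[0,T)}$; hence $T=\widetilde{T}$ and the two families coincide on $[0,T)$. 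The main obstacle is the continuation step in the proof of maximality: one must prevent the restart time from collapsing to zero as $t_0\nearrow T_{\max}$, which forces one to reparametrise the solution near $T_{\max}$ by a regular parametrisation with controlled norm and speed bounded below and to feed this into the quantified short time existence, and one has to keep careful track of the bookkeeping of parametrisation data on overlapping open time intervals imposed by Definition~\ref{geosolution}.
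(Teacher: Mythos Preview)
Your approach is essentially the same as the paper's: define $T_{\max}$ as the supremum of existence times, build the maximal solution by gluing via Theorem~\ref{geouniqueness}, and deduce uniqueness of the maximal solution from Theorem~\ref{geouniqueness} together with maximality.

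The one place you overwork things is the maximality step. You restart at $t_0<T_{\max}$, worry that the restart time might collapse as $t_0\nearrow T_{\max}$, and appeal to the quantified Theorem~\ref{short time existence} together with controlled reparametrisations to get a uniform $\delta_0$. The paper's argument is much shorter: by hypothesis there is a solution $\widetilde{\mathbb T}$ on the \emph{closed} interval $[0,\widetilde T]$ with $\widetilde T\ge T_{\max}$, so $\widetilde{\mathbb T}(\widetilde T)$ is a geometrically admissible Triod (by Theorem~\ref{embeddingBUC} the last parametrisation extends continuously to $\widetilde T$ with values in $W_p^{2-\nicefrac{2}{p}}$), and a single application of Theorem~\ref{existencegeopro} at $\widetilde T$ gives a solution on $[\widetilde T,\widetilde T+\delta]$ for some $\delta>0$. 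Concatenating yields a solution on $[0,\widetilde T+\delta]$, contradicting the definition of $T_{\max}$. No uniform restart time, no reparametrisation, no limit $t_0\nearrow T_{\max}$ is needed: you restart once at a fixed time. Your more elaborate argument is not wrong, and the care about overlapping parametrisation intervals is reasonable bookkeeping, but it obscures the point and imports machinery (quantified existence time, controlled constant-speed reparametrisations) that is only genuinely needed later, in the proof of Theorem~\ref{longtimebehavior}.
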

\begin{proof}
Given an admissible Triod $\mathbb{T}_0$ we let
\begin{align*}
T_{max}:=&\sup\left\{T>0: \text{ there exists a solution }(\mathbb{T}^T(t)) \text{ to the motion by curvature in } [0,T]\right.\\
 &\qquad \quad \,\,\,\, \qquad \left.\text{ with initial datum }\mathbb{T}_0\right\}\,.
\end{align*}
Theorem~\ref{existencegeopro} yields $T_{max}\in(0,\infty)\cup\{\infty\}$. Given any $t\in[0,T_{max})$ we may consider a solution $\mathbb{T}^T$ with $T\in(t,T_{max})$ to the motion by curvature in $[0,T]$ with initial datum $\mathbb{T}_0$ and set
\begin{equation*}
\mathbb{T}(t):=\mathbb{T}^T(t)\,.
\end{equation*} 
We note that $\mathbb{T}$ is well-defined on $[0,T_{max})$ as any two solutions $\mathbb{T}^{T_1}$ and $\mathbb{T}^{T_2}$ with $T_1$, $T_2\in[0,T_{max})$ to the motion by curvature with initial datum $\mathbb{T}_0$ coincide on their common interval of existence by Theorem~\ref{geouniqueness}. One easily verifies that $(\mathbb{T}(t))_{t\in[0,T_{max})}$ satisfies the properties of a maximal solution stated in Definition~\ref{maximalsolution}. Indeed, if there existed a solution $\widetilde{\mathbb{T}}(\tau)$ to the motion by curvature in $[0,\widetilde{T}]$ for $\widetilde{T}\geq T_{max}$, Theorem~\ref{existencegeopro} would imply the existence of a solution with initial datum $\widetilde{\mathbb{T}}(\widetilde{T})$ in a time interval $[0,\delta]$, $\delta >0$. This would yield the existence of a solution in the time interval $[0,\widetilde{T}+\delta]$ with initial datum $\mathbb{T}_0$ contradicting the definition of $T_{max}$. 
The uniqueness assertion follows from Theorem~\ref{geouniqueness}.
\end{proof}

\section{Smoothness of the Special Flow}\label{parabolicsmoothingresult}

This section is devoted to prove that solutions to the Special Flow are smooth for positive times. Heuristically, this regularisation effect is due to the parabolic nature of the problem. The basic idea of the proof is based on the so called parameter trick which is due to Angenent~\cite{angen3} and has been generalized to 
several situations~\cite{lunardi1,lunardi2,Prusssimonett}. 
However, due to the fully non-linear boundary condition
\begin{equation*}
\sum_{i=1}^{3} \frac{\gamma^i_x(t,0)}{\vert\gamma^i_x(t,0)\vert}=0
\end{equation*}
the Special Flow is not treated in the above mentioned results.
An adaptation of the parameter trick that allows to treat fully non-linear boundary terms 
is presented in~\cite{michi}.
We follow~\cite[Section 4]{michi}
modifying the arguments for the application in our Sobolev setting.

In the following we let $\mathcal{E}\sigma\in\boldsymbol{E}_T$ be a solution to the Special Flow on $[0,T]$, $T>0$, with initial datum $\sigma \in W_p^{2-\nicefrac{2}{p}}\left((0,1);(\mathbb{R}^n)^3\right)$.

The key idea to apply Angenent's parameter trick lies in an implicit function type argument which itself relies on the invertibility of the linearisation of the Special Flow in the solution $\mathcal{E}\sigma$. Thus, the linear analysis from Subsection~\ref{linearcompcond} will not be enough to apply this method. So before we can actually start we have to generalise Theorem~\ref{exlin}.

\begin{defn}\label{DefinitionLinearisationinEsigma}
	We consider the full linearisation of system~\eqref{problema} around $\mathcal{E}\sigma$ which gives
	\begin{align}\label{EquationLinearisationinEsigma}
	\begin{cases}
	\begin{array}{rl}
	\gamma^i_{t}(t,x)-\frac{1}{\left|(\mathcal{E}\sigma)^i_{x}(t,x)\right|^{2}}\,\gamma^i_{xx}(t,x)-2\frac{(\mathcal{E}\sigma)^i_{xx}(t,x)\left\langle\gamma^i_x(t,x),(\mathcal{E}\sigma)^i_x(t,x)\right\rangle}{\vert(\mathcal{E}\sigma)^i_x(t,x)\vert^4}&=f^i(t,x)\,,\\
	\gamma(t,1)&=\eta(t)\,,
	\\
	\gamma^1\left(t,0\right)-\gamma^{2}\left(t,0\right)&=0\,,\\
	\gamma^2(t,0)-\gamma^{3}\left(t,0\right)&=0\,,  \\
	-\sum_{i=1}^3 \left(\frac{\gamma^i_x(t,0)}{\vert (\mathcal{E}\sigma)^i_x(t,0)\vert}
	-\frac{(\mathcal{E}\sigma)^i_x(t,0)\left\langle
		\gamma^i_x(t,0),(\mathcal{E}\sigma)^i_x(t,0)\right\rangle}{\vert (\mathcal{E}\sigma)^i_x(t,0)\vert^3}\right)&=b(t)\,,\\
	\gamma\left(0,x\right)&=\psi\left(x\right)  \,.\\
	\end{array}
	\end{cases}
	\end{align}
	
	%
	%
	Here $\psi$ is an admissible initial value with respect to the given right hand side $\eta$ and $b$. For $\gamma\in \boldsymbol{E}_T$ we define $\mathcal{A}_{T,\mathcal{E}}(\gamma)\in L_p\left((0,T);L_p((0,1);(\mathbb{R}^n)^3)\right)$ by
	\begin{equation*}
	\left(\mathcal{A}_{T,\mathcal{E}}(\gamma)\right)^i:=\frac{1}{\left|(\mathcal{E}\sigma)^i_{x}(t,x)\right|^{2}}\,\gamma^i_{xx}(t,x)+2\frac{(\mathcal{E}\sigma)^i_{xx}(t,x)\left\langle\gamma^i_x(t,x),(\mathcal{E}\sigma)^i_x(t,x)\right\rangle}{\vert(\mathcal{E}\sigma)^i_x(t,x)\vert^4}\,.
	\end{equation*}
\end{defn}
\begin{defn}[The linearised boundary operator]
	Let $T>0$ and 
	\begin{equation*}
	\mathcal{B}_{T,\mathcal{E}}:\boldsymbol{E}_T=W_p^{1,2}\left((0,T)\times(0,1);(\mathbb{R}^n)^3\right)\to W_p^{1-\nicefrac{1}{2p}}\left((0,T);(\mathbb{R}^n)^5\right)\times W_p^{\nicefrac{1}{2}-\nicefrac{1}{2p}}\left((0,T);\mathbb{R}^n\right)
	\end{equation*}
	be the linearised boundary operator induced by the linearisation in $\mathcal{E}\sigma$, i.e.,
	\begin{equation*}
	\mathcal{B}_{T,\mathcal{E}}(\gamma)=
	\begin{pmatrix}
	\gamma(\cdot,1) \\
	\gamma^1(\cdot,0)-\gamma^2(\cdot,0)\\
	\gamma^2(\cdot,0)-\gamma^3(\cdot,0)\\
	-\sum_{i=1}^3 \frac{\gamma^i_x(\cdot,0)}{\vert (\mathcal{E}\sigma)^i_x(\cdot,0)\vert}
	-\frac{(\mathcal{E}\sigma)^i_x(\cdot,0)\left\langle
		\gamma^i_x(\cdot,0),(\mathcal{E}\sigma)^i_x(\cdot,0)\right\rangle}{\vert (\mathcal{E}\sigma)^i_x(\cdot,0)\vert^3}
	\end{pmatrix}.
	\end{equation*}
	Moreover we let 
	\begin{align*}
	\boldsymbol{X}_T   & :=\ker(\mathcal{B}_{T,\mathcal{E}})\,.
	\end{align*}
\end{defn}
As $\mathcal{B}_{T,\mathcal{E}}$ is continuous, the space $\boldsymbol{X}_T$ is a closed subspace of $\boldsymbol{E}_T$ and thus a Banach space.
\begin{rem}[Existence analysis for~\eqref{EquationLinearisationinEsigma}]\label{RemarkExistenceAnalysisLinearisationinEsigma}
	Note that the compatibility conditions in Definition~\ref{linearcompcond} for system~\eqref{linsys} are precisely the same as the ones for \eqref{EquationLinearisationinEsigma} due to the fact that $\mathcal{B}_{T,\mathcal{E}}\big|_{t=0}$ equals the original linearisation. Also, with the same arguments as in the proof of Lemma \ref{LemmaLopatinskisShapiroconditions} we can derive the Lopatinskii-Shapiro conditions for $\mathcal{B}_{T,\mathcal{E}}$. Therefore, the result from Theorem \ref{exlin} holds also for problem \eqref{EquationLinearisationinEsigma}. For $\gamma\in\boldsymbol{E}_T$ we write
	\begin{equation*}
	L_{T,\mathcal{E}}(\gamma):=\begin{pmatrix}
	\gamma_t-A_{T,\mathcal{E}}(\gamma)\\
	\mathcal{B}_{T,\mathcal{E}}(\gamma)\\
	\gamma_{|t=0}
	\end{pmatrix}\,.
	\end{equation*}
\end{rem}
With the previous considerations we have the basics to start the work on the parameter trick. As a first step we have to construct a parametrisation of the non-linear boundary conditions over the linear boundary conditions. We need to do this as we cannot have the non-linear boundary operator to be part of the operator used in the parameter trick due to technical reasons with the compatibility conditions.  

In the following lemma we construct a partition of the solution space $\boldsymbol{E}_T=\boldsymbol{X}_T\oplus \boldsymbol{Z}_T$.
\begin{lem}
	Let $T>0$. There exists a closed subspace $\boldsymbol{Z}_T$ of $\boldsymbol{E}_T$ such that $\boldsymbol{E}_T=\boldsymbol{X}_T\oplus\boldsymbol{Z}_T$.
\end{lem}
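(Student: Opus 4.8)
The statement to prove is that the solution space $\boldsymbol{E}_T = W_p^{1,2}((0,T)\times(0,1);(\mathbb{R}^n)^3)$ admits a topological direct sum decomposition $\boldsymbol{E}_T = \boldsymbol{X}_T \oplus \boldsymbol{Z}_T$ with both summands closed, where $\boldsymbol{X}_T = \ker(\mathcal{B}_{T,\mathcal{E}})$. Since $\boldsymbol{X}_T$ is already known to be closed, the task is to produce a closed complement. The natural way to do this is to exhibit a bounded linear projection $P : \boldsymbol{E}_T \to \boldsymbol{E}_T$ with $\operatorname{range}(P) = \boldsymbol{X}_T$; then $\boldsymbol{Z}_T := \ker(P) = \operatorname{range}(\mathrm{Id}-P)$ is automatically closed and $\boldsymbol{E}_T = \boldsymbol{X}_T \oplus \boldsymbol{Z}_T$.

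To build such a projection, the plan is to invert the boundary operator along a right inverse of the full linear solution operator. Concretely, by Remark~\ref{RemarkExistenceAnalysisLinearisationinEsigma} the operator $L_{T,\mathcal{E}}$ is an isomorphism from $\boldsymbol{E}_T$ onto the data space
\begin{equation*}
\mathbb{F}_{T,\mathcal{E}} := \bigl\{(f,g,\psi) : f\in L_p, \; g\in W_p^{1-\nicefrac{1}{2p}}\times W_p^{\nicefrac{1}{2}-\nicefrac{1}{2p}}, \; \psi\in W_p^{2-\nicefrac{2}{p}} \text{ satisfying the compatibility conditions}\bigr\},
\end{equation*}
where I abbreviate $g = (\mathcal{B}_{T,\mathcal{E}}(\gamma))$ and have grouped the five trace components with the angle component. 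First I would fix, once and for all, a reference element $\gamma_0 \in \boldsymbol{E}_T$ with $\mathcal{B}_{T,\mathcal{E}}(\gamma_0) = g_0$ for some admissible boundary datum; more importantly, I want the linear map that, given a $\gamma$, replaces its boundary and initial values by $0$ in a controlled way. The cleanest route: define $P\gamma := \gamma - L_{T,\mathcal{E}}^{-1}\bigl(\,0,\ \mathcal{B}_{T,\mathcal{E}}(\gamma),\ \gamma_{|t=0}\,\bigr)$. One checks that $(0,\mathcal{B}_{T,\mathcal{E}}(\gamma),\gamma_{|t=0})$ is an admissible right-hand side — the compatibility conditions hold because $\mathcal{B}_{T,\mathcal{E}}|_{t=0}$ applied to $\gamma$ equals the traces at $t=0$ of $\gamma$ itself, which are exactly what the compatibility conditions relate. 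Then $P$ is bounded (composition of bounded maps: the trace operators of Lemma~\ref{derivativesol}, $\mathcal{B}_{T,\mathcal{E}}$, and $L_{T,\mathcal{E}}^{-1}$), and $L_{T,\mathcal{E}}(P\gamma)$ has vanishing boundary component and vanishing initial trace, so $\mathcal{B}_{T,\mathcal{E}}(P\gamma)=0$, i.e. $P\gamma\in\boldsymbol{X}_T$. Moreover if $\gamma\in\boldsymbol{X}_T$ then $\mathcal{B}_{T,\mathcal{E}}(\gamma)=0$; the element $L_{T,\mathcal{E}}^{-1}(0,0,\gamma_{|t=0})$ has the same image under $L_{T,\mathcal{E}}$ as... wait — this is the point that needs care: $P$ as defined is not a projection because $L_{T,\mathcal{E}}^{-1}(0,0,\gamma_{|t=0})$ need not be $0$ on $\boldsymbol{X}_T$.

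To fix this, I would instead only strip off the boundary part and not touch the initial condition, using that $L_{T,\mathcal{E}}$ restricted to the space where $\gamma_{|t=0}$ is prescribed is still invertible onto the slice of $\mathbb{F}_{T,\mathcal{E}}$ with that fixed initial value. Precisely: set $P\gamma := \gamma - S\bigl(\mathcal{B}_{T,\mathcal{E}}(\gamma)\bigr)$, where $S$ is a bounded linear right inverse of $\mathcal{B}_{T,\mathcal{E}}$ with the extra property $S(g)_{|t=0} = 0$ whenever $g_{|t=0}=0$ — such an $S$ exists by solving $L_{T,\mathcal{E}}(\gamma)=(0,g,\psi_g)$ with a bounded linear choice $g\mapsto\psi_g$ of a compatible initial value vanishing when $g_{|t=0}$ does (available since the compatibility conditions are linear constraints on finitely many traces and one can split off a finite-dimensional correction). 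Then for $\gamma\in\boldsymbol{X}_T$ we have $\mathcal{B}_{T,\mathcal{E}}(\gamma)=0$, hence $S(0)=0$ (by linearity), so $P\gamma=\gamma$; and for general $\gamma$, $\mathcal{B}_{T,\mathcal{E}}(P\gamma) = \mathcal{B}_{T,\mathcal{E}}(\gamma) - \mathcal{B}_{T,\mathcal{E}}(S(\mathcal{B}_{T,\mathcal{E}}(\gamma))) = \mathcal{B}_{T,\mathcal{E}}(\gamma) - \mathcal{B}_{T,\mathcal{E}}(\gamma) = 0$ since $\mathcal{B}_{T,\mathcal{E}}\circ S = \mathrm{Id}$. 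Thus $P$ is a bounded idempotent with range exactly $\boldsymbol{X}_T$, and $\boldsymbol{Z}_T := \ker P$ does the job.

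I expect the only real subtlety — the main obstacle — to be the construction of the right inverse $S$ respecting the compatibility conditions: one must verify that, given boundary data $g$, a compatible initial value $\psi_g$ can be chosen linearly and boundedly in $g$, vanishing when $g_{|t=0}$ vanishes. This uses surjectivity of the temporal trace (as in Lemma~\ref{extensionE_T} and Lemma~\ref{extensionboundarydata}) together with the fact that the compatibility conditions are a bounded linear system in the boundary traces of $\psi$, whose solvability was already guaranteed by the existence theory for~\eqref{EquationLinearisationinEsigma} in Remark~\ref{RemarkExistenceAnalysisLinearisationinEsigma}. Everything else is bookkeeping with the boundedness of trace operators and $L_{T,\mathcal{E}}^{-1}$.
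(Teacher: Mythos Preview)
Your approach is correct and, at its core, the same idea as the paper's: build a bounded linear right inverse $S$ of $\mathcal{B}_{T,\mathcal{E}}$ and set $\boldsymbol{Z}_T:=\operatorname{range}(S)=\ker(\mathrm{Id}-S\circ\mathcal{B}_{T,\mathcal{E}})$. The difference is in presentation. The paper makes $S$ completely explicit by splitting it into two pieces: given boundary data $g$, it first solves an \emph{elliptic} boundary value problem (system~\eqref{ellipticsystem}) with data $g_{|t=0}$ to obtain a function $z_2\in W_p^2((0,1);(\mathbb{R}^n)^3)$, extended constantly in time, which absorbs the compatibility conditions; then it solves the parabolic problem $L_{T,\mathcal{E}}(z_1)=(0,\,g-\mathcal{B}_{T,\mathcal{E}}(z_2),\,0)$, now with zero initial datum since $g-\mathcal{B}_{T,\mathcal{E}}(z_2)$ vanishes at $t=0$. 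This elliptic step is precisely your ``bounded linear choice $g\mapsto\psi_g$ of a compatible initial value'', made concrete. Having built $\boldsymbol{Z}_T=Z_T^1\oplus Z_T^2$ explicitly, the paper then verifies closedness by hand via a sequential argument, and checks $\boldsymbol{X}_T\cap\boldsymbol{Z}_T=\{0\}$ and $\boldsymbol{X}_T+\boldsymbol{Z}_T=\boldsymbol{E}_T$ directly.

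Your abstract route has the advantage that closedness of $\boldsymbol{Z}_T$ and the direct sum decomposition are immediate consequences of $P=\mathrm{Id}-S\circ\mathcal{B}_{T,\mathcal{E}}$ being a bounded idempotent, so no sequential argument is needed. Two small remarks: the extra requirement you impose, that $S(g)_{|t=0}=0$ whenever $g_{|t=0}=0$, is not needed for the projection argument (any bounded linear right inverse works, as your own verification shows); and your sketch of the map $g\mapsto\psi_g$ as a ``finite-dimensional correction'' is correct but would benefit from being written out---either via the paper's elliptic problem or, more simply, by prescribing the finitely many traces $\psi^i(0),\psi^i(1),\psi^i_x(0)$ with explicit polynomials depending linearly on $g(0)$.
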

\begin{proof}
	Firstly, we consider the space 
	\begin{equation*}
	\overline{Z}^{1}_T:=\left\{\mathfrak{b}\in W_p^{1-\nicefrac{1}{2p}}\left((0,T);(\mathbb{R}^n)^5\right)\times W_p^{\nicefrac{1}{2}-\nicefrac{1}{2p}}\left((0,T);\mathbb{R}^n\right): \mathfrak{b}_{|t=0}=0\right\}.
	\end{equation*}
	We notice that $f=0$, $\mathfrak{b}\in\overline{Z}^1_T$, $\psi=0$ is a suitable right hand side for problem~\eqref{EquationLinearisationinEsigma}. Hence for every $\mathfrak{b}\in \overline{Z}_T^1$ there exists a unique solution $L_{T,\mathcal{E}}^{-1}\left(0,\mathfrak{b},0\right)\in \boldsymbol{E}_T$ to~\eqref{EquationLinearisationinEsigma} and the space $Z_T^1:=L_{T,\mathcal{E}}^{-1}\left((0,\overline{Z}_T^1,0)\right)$ is a closed subspace of $\boldsymbol{E}_T$.
	
	Next we define the space 
	\begin{equation*}
	\overline{Z}^2:=(\mathbb{R}^n)^5\times\mathbb{R}^n\,.
	\end{equation*}
	Given $\tilde{b}\in\overline{Z}^2$ the elliptic system $\tilde{L}\eta=(0,\tilde{b})$ defined by
	\begin{equation}\label{ellipticsystem}
	\begin{cases}
	\begin{array}{rl}
	-\frac{1}{\left|\sigma^i_{x}(x)\right|^{2}}\,\eta^i_{xx}(x)&=0\,,\qquad \quad x\in(0,1)\,,i\in\{1,2,3\}\,,\\
	\eta^1(1)     &=\tilde{b}^1\,,\\
	\eta^2(1)     &=\tilde{b}^2\,,\\
	\eta^3(1)     &=\tilde{b}^3\,,\\
	\eta^1(0)-\eta^{2}(0)&=\tilde{b}^4\,,\\
	\eta^2(0)-\eta^{3}(0)&=\tilde{b}^5\,,  \\
	-\sum_{i=1}^3 \left(\frac{\eta^i_x(0)}{\vert \sigma^i_x(0)\vert}
	-\frac{\sigma^i_x(0)\left\langle
		\eta^i_x(0),\sigma^i_x(0)\right\rangle}{\vert \sigma^i_x(0)\vert^3}\right)&=\tilde{b}^6\,,
	\end{array}
	\end{cases}
	\end{equation}
	has a unique solution $\eta\in W_p^{2}\left((0,1);(\mathbb{R}^n)^3\right)$ which we denote by $\tilde{L}^{-1}(0,\tilde{b})$.
	This is guaranteed due to the results in~\cite{agmondouglisNi1959estimates} and the fact that the boundary operator fulfils the Lopatinskii-Shapiro conditions according to Lemma~\ref{LemmaLopatinskisShapiroconditions}. The space $\tilde{L}^{-1}(0,\overline{Z}^2)$ is a closed subspace of $W_p^{2}\left((0,1);(\mathbb{R}^n)^3\right)$ due to continuity of the solution operator which is guaranteed by the energy estimates 
	in~\cite{agmondouglisNi1959estimates}. Extending every function in $\tilde{L}^{-1}(0,\overline{Z}^2)$ constantly in time we can view $\tilde{L}^{-1}(0,\overline{Z}^2)$ as a closed subspace of $\boldsymbol{E}_T$. This space will be denoted by $Z^2_T$.
	It is straightforward to check that $Z_T^1\cap Z_T^2=\{0\}$ which allows us to define $\boldsymbol{Z}_T$ as the  subspace of $\boldsymbol{E}_T$ given by
	\begin{equation*}
	\boldsymbol{Z}_T:=Z_T^1\oplus Z_T^2\,.
	\end{equation*}
	Note that $\boldsymbol{Z}_T$ is indeed a closed subspace which one sees as follows. Suppose that $$(z_n)_{n\in\N}=(z_n^1+z_n^2)_{n\in\N}\subset \boldsymbol{Z}_T$$ is a convergent sequence in $\boldsymbol{E}_T$. 
	
	Due to $\boldsymbol{E}_T\hookrightarrow C([0,T]; C^{1+\alpha}([0,1];(\R^n)^3))$ for $\alpha\in\left(0,1-\nicefrac{3}{p}\right]$ according to Theorem~\ref{embeddingBUC} we may conclude that the sequence $(z_n\big|_{t=0})_{n\in\N}=(z_n^2\big|_{t=0})_{n\in\N}$ converges in $C^{1+\alpha}([0,1];(\R^n)^3)$. In particular, this yields the convergence of the boundary data needed for the elliptic system we used to construct $z^2_n$. Continuity of the elliptic solution operator then implies that $(z_n^2\big|_{t=0})_{n\in\N}$ converges in $W^2_p((0,1);(\R^n)^3)$. Due to its constant extension in time we see that $(z_n^2)_{n\in\N}$ converges in $\boldsymbol{E}_T$ to a limit $z^2$ which is also in $Z^2_T$ being a closed subspace of $\boldsymbol{E}_T$. Then $(z_n^1)_{n\in\N}=(z_n)_{n\in\N}-(z_n^2)_{n\in\N}$ converges in $\boldsymbol{E}_T$ as sum of two convergent sequences to an element $z^1$ of the closed space $Z_T^1$. We conclude that $(z_n)_{n\in\N}$ converges to $z^1+z^2\in \boldsymbol{Z}_T$ which shows that $\boldsymbol{Z}_T$ is closed.\\	
	It remains to prove that $\boldsymbol{X}_T\cap \boldsymbol{Z}_T=\{0\}$ and $\boldsymbol{E}_T= \boldsymbol{X}_T+\boldsymbol{Z}_T$. To this end let $\gamma\in \boldsymbol{X}_T\cap \boldsymbol{Z}_T$. By definition of $\boldsymbol{X}_T$ we have $\mathcal{B}_{T,\mathcal{E}}(\gamma)=0$ which implies in particular $\mathcal{B}_{T,\mathcal{E}}(\gamma)_{|t=0}=0$. As $\gamma$ lies in $\boldsymbol{Z}_T$, there exist $z_1\in Z_T^1$, $z_2\in Z_T^2$ with $\gamma=z_1+z_2$. Using that $\mathcal{B}_{T,\mathcal{E}}(z_1)$ lies in $\overline{Z}_T^1$, we observe  
	\begin{equation*}
	0=\mathcal{B}_{T,\mathcal{E}}(z_1+z_2)_{|t=0}=\mathcal{B}_{T,\mathcal{E}}(z_1)_{|t=0}+\mathcal{B}_{T,\mathcal{E}}(z_2)_{|t=0}=\mathcal{B}_{T,\mathcal{E}}(z_2)_{|t=0}\,.
	\end{equation*}
	Due to the uniqueness of the elliptic system \eqref{ellipticsystem} this shows $(z_2)_{|t=0}=0$. 
	By definition of $Z_T^2$ we obtain $z_2=0$. This implies $0=\mathcal{B}_{T,\mathcal{E}}(\gamma)=\mathcal{B}_{T,\mathcal{E}}(z_1)$ which gives $z_1=L_{T,\mathcal{E}}^{-1}(0,0,0)=0$.

	To prove that $\boldsymbol{E}_T=\boldsymbol{X}_T+\boldsymbol{Z}_T$ we let $\gamma\in\boldsymbol{E}_T$.
	We define
		\begin{equation*}
		z_2:=\tilde{L}^{-1}(0,\mathcal{B}_{T,\mathcal{E}}(\gamma)_{|t=0})\in Z_T^2
		\end{equation*}
	viewing $z_2$ as an element of $\boldsymbol{E}_T$ by extending it constantly in time. By definition of the boundary operator in the elliptic system~\eqref{ellipticsystem} and due to $(\mathcal{E}\sigma)_{|t=0}=\sigma$ we have
	\begin{equation*}
	\mathcal{B}_{T,\mathcal{E}}(z_2)_{|t=0}=\mathcal{B}_{T,\mathcal{E}}(\gamma)_{|t=0}\,.
	\end{equation*}
	In particular, $\mathcal{B}_{T,\mathcal{E}}(\gamma)-\mathcal{B}_{T,\mathcal{E}}(z_2)$ lies in $\overline{Z}^1_T$ and we may define
	\begin{equation*}
	z_1:=L_{T,\mathcal{E}}^{-1}\left(0,\mathcal{B}_{T,\mathcal{E}}(\gamma)-\mathcal{B}_{T,\mathcal{E}}(z_2),0\right)\in Z_T^1\,.
	\end{equation*}
	Now it remains to show that $\gamma-z_1-z_2$ lies in $\boldsymbol{X}_T$ which is equivalent to $\mathcal{B}_{T, \mathcal{E}}(\gamma-z_1-z_2)=0$ which follows by construction.
\end{proof}

\begin{lem}[Parametrisation of the nonlinear boundary conditions]\label{LemmaParametrizationofNonLinearBoundaryConditions}
	Let $T>0$. There exists a neighbourhood $U$ of $0$ in $\boldsymbol{X}_T$, a function $\varrho:U\to \boldsymbol{Z}_T$ and a neighbourhood $V$ of $\mathcal{E}\sigma$ in $\boldsymbol{E}_T$ such that
	\begin{equation*}
	\left\{\mathcal{E}\sigma+\boldsymbol{u}+\varrho(\boldsymbol{u})\,:\,\boldsymbol{u}\in U\right\}=\left\{\gamma\in V:\mathcal{G}(\gamma)=0\right\}
	\end{equation*}
	where $\mathcal{G}$ denotes the operator 
	\begin{equation*}
	\gamma\mapsto \mathcal{G}(\gamma):=
	\begin{pmatrix}
	\gamma^1(\cdot,1)-\sigma^1(1)\,\\
	\gamma^2(\cdot,1)-\sigma^2(1)\,\\
	\gamma^3(\cdot,1)-\sigma^3(1)\,\\
	\gamma^1(\cdot,0)-\gamma^2(\cdot,0)\,\\ 
	\gamma^2(\cdot,0)-\gamma^3(\cdot,0)\,\\
	\sum_{i=1}^3\frac{\gamma^i_x(\cdot,0)}{\vert \gamma^i_x(\cdot,0)\vert}\,
	\end{pmatrix}\,.
	\end{equation*}
	Furthermore, it holds that $(D\varrho)_{|0}\equiv 0$. 
\end{lem}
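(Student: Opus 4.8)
The plan is to solve $\mathcal{G}(\gamma)=0$ near $\gamma=\mathcal{E}\sigma$ by the implicit function theorem, exploiting the splitting $\boldsymbol{E}_T=\boldsymbol{X}_T\oplus\boldsymbol{Z}_T$ to isolate the ``free'' directions $\boldsymbol{X}_T=\ker\mathcal{B}_{T,\mathcal{E}}$ from the complementary directions $\boldsymbol{Z}_T$, along which the linearised boundary operator will turn out to be invertible. Concretely, one sets $\Phi(\boldsymbol{u},\boldsymbol{z}):=\mathcal{G}(\mathcal{E}\sigma+\boldsymbol{u}+\boldsymbol{z})$ for $(\boldsymbol{u},\boldsymbol{z})$ near $(0,0)$ in $\boldsymbol{X}_T\times\boldsymbol{Z}_T$, verifies that $\Phi(0,0)=0$ and that $D_{\boldsymbol{z}}\Phi(0,0)\colon\boldsymbol{Z}_T\to Y$ is an isomorphism onto the common target space $Y:=W_p^{1-\nicefrac{1}{2p}}\left((0,T);(\mathbb{R}^n)^5\right)\times W_p^{\nicefrac{1}{2}-\nicefrac{1}{2p}}\left((0,T);\mathbb{R}^n\right)$ of $\mathcal{G}$ and $\mathcal{B}_{T,\mathcal{E}}$, and then reads off $\varrho$ from the theorem.

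First I would check that $\mathcal{G}$ is well defined and $C^\infty$ on an $\boldsymbol{E}_T$--neighbourhood $V_0$ of $\mathcal{E}\sigma$. Since $\mathcal{E}\sigma$ solves the Special Flow, $|(\mathcal{E}\sigma)^i_x(t,0)|\neq 0$ on $[0,T]$, and the embedding $\boldsymbol{E}_T\hookrightarrow C\left([0,T];C^1\left([0,1];(\mathbb{R}^n)^3\right)\right)$ of Theorem~\ref{embeddingBUC} makes this quantity bounded below by a positive constant, which persists on a sufficiently small $V_0$. As in the construction of $N_T$ one may then replace $p\mapsto p/|p|$ by a globally smooth $h\colon\mathbb{R}^n\to\mathbb{R}^n$ agreeing with it outside a small ball, without changing $\mathcal{G}$ on $V_0$. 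The trace maps $\gamma\mapsto\gamma(\cdot,1)$, $\gamma\mapsto\gamma^i(\cdot,0)$ and $\gamma\mapsto\gamma^i_x(\cdot,0)$ are bounded linear into the relevant temporal Sobolev spaces by Lemma~\ref{derivativesol} and Proposition~\ref{identification}, so the first five components of $\mathcal{G}$ are affine; for the last component one composes $\gamma^i_x(\cdot,0)$ with $h$ and uses that $W_p^{\nicefrac{1}{2}-\nicefrac{1}{2p}}\left((0,T);\mathbb{R}\right)$ is a Banach algebra of continuous functions (Proposition~\ref{Banachalgebra}, applicable since $\nicefrac{1}{2}-\nicefrac{1}{2p}-\nicefrac{1}{p}>0$ for $p>3$) to deduce that the Nemytskii operator $v\mapsto h\circ v$, and hence $\mathcal{G}$, is smooth near $\mathcal{E}\sigma$; this is the argument of~\cite[Section~4]{michi} transferred to the present Sobolev setting.

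Next I would compute the differential. Because $(Dh)(q)v=v/|q|-q\langle v,q\rangle/|q|^3$, differentiating shows that $D\mathcal{G}(\mathcal{E}\sigma)$ coincides with $\mathcal{B}_{T,\mathcal{E}}$ up to the sign of its last component (the constants $\sigma^i(1)$ dropping out), so $\ker D\mathcal{G}(\mathcal{E}\sigma)=\ker\mathcal{B}_{T,\mathcal{E}}=\boldsymbol{X}_T$ and the two operators have the same range. That this range is all of $Y$ follows from Remark~\ref{RemarkExistenceAnalysisLinearisationinEsigma}: given $\mathfrak{b}\in Y$, the operator $L_{T,\mathcal{E}}$ is an isomorphism onto the space of data satisfying the linear compatibility conditions of Definition~\ref{linearcompcond}, and one can always pick an initial datum $\psi\in W_p^{2-\nicefrac{2}{p}}$ (for instance a low degree polynomial in $x$) compatible with $\mathfrak{b}(0)$, so that $L_{T,\mathcal{E}}^{-1}\left(0,\mathfrak{b},\psi\right)$ is a preimage of $\mathfrak{b}$ under $\mathcal{B}_{T,\mathcal{E}}$. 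Since $\boldsymbol{X}_T$ and $\boldsymbol{Z}_T$ are closed, intersect only in $\{0\}$ and span $\boldsymbol{E}_T$, the associated projections are continuous, and $D\mathcal{G}(\mathcal{E}\sigma)|_{\boldsymbol{Z}_T}\colon\boldsymbol{Z}_T\to Y$ is a continuous bijection, hence an isomorphism by the open mapping theorem. The implicit function theorem applied to $\Phi$ at $(0,0)$ — where $\Phi(0,0)=\mathcal{G}(\mathcal{E}\sigma)=0$ because $\mathcal{E}\sigma$ satisfies the boundary conditions of~\eqref{problema} — then yields neighbourhoods $U\ni 0$ in $\boldsymbol{X}_T$, $W\ni 0$ in $\boldsymbol{Z}_T$ and a smooth map $\varrho\colon U\to W$ with $\varrho(0)=0$ such that, for $(\boldsymbol{u},\boldsymbol{z})\in U\times W$, one has $\mathcal{G}(\mathcal{E}\sigma+\boldsymbol{u}+\boldsymbol{z})=0$ if and only if $\boldsymbol{z}=\varrho(\boldsymbol{u})$. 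Taking $V:=\left(\mathcal{E}\sigma+\{\boldsymbol{u}+\boldsymbol{z}:\boldsymbol{u}\in U,\ \boldsymbol{z}\in W\}\right)\cap V_0$, which is a neighbourhood of $\mathcal{E}\sigma$ in $\boldsymbol{E}_T$ by the topological splitting, gives the claimed identity of sets. Finally, differentiating $\mathcal{G}(\mathcal{E}\sigma+\boldsymbol{u}+\varrho(\boldsymbol{u}))=0$ at $\boldsymbol{u}=0$ gives $D\mathcal{G}(\mathcal{E}\sigma)\left(\boldsymbol{h}+D\varrho(0)\boldsymbol{h}\right)=0$ for all $\boldsymbol{h}\in\boldsymbol{X}_T$, so $D\varrho(0)\boldsymbol{h}\in\boldsymbol{X}_T\cap\boldsymbol{Z}_T=\{0\}$, i.e.\ $(D\varrho)_{|0}\equiv 0$.

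The two steps requiring genuine care are the smoothness of the nonlinear substitution operator $\gamma\mapsto\sum_i h(\gamma^i_x(\cdot,0))$ in the fractional space $W_p^{\nicefrac{1}{2}-\nicefrac{1}{2p}}$, which relies on the Banach--algebra estimates of Proposition~\ref{Banachalgebra} together with a Taylor expansion and an induction on the order of the derivative, and the surjectivity of $\mathcal{B}_{T,\mathcal{E}}$ onto $Y$ (equivalently, of $D\mathcal{G}(\mathcal{E}\sigma)|_{\boldsymbol{Z}_T}$), which is the point where the maximal--regularity isomorphism of Remark~\ref{RemarkExistenceAnalysisLinearisationinEsigma} and the freedom to prescribe a compatible initial datum are essential. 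Everything else is a routine application of the implicit function theorem.
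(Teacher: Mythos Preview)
Your proof is correct and follows essentially the same route as the paper: define $F(\boldsymbol{x},\boldsymbol{z})=\mathcal{G}(\mathcal{E}\sigma+\boldsymbol{x}+\boldsymbol{z})$ on $\boldsymbol{X}_T\oplus\boldsymbol{Z}_T$, identify $D_{\boldsymbol{z}}F(0,0)$ with $\mathcal{B}_{T,\mathcal{E}}|_{\boldsymbol{Z}_T}$ (up to the harmless sign in the last component, which you note and the paper glosses over), show this is an isomorphism onto $\boldsymbol{Y}_T$, apply the implicit function theorem, and deduce $(D\varrho)_{|0}=0$ from $\boldsymbol{X}_T\cap\boldsymbol{Z}_T=\{0\}$. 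The only cosmetic difference is in the surjectivity step: the paper builds the preimage directly inside $\boldsymbol{Z}_T$ using the explicit decomposition $\boldsymbol{Z}_T=Z_T^1\oplus Z_T^2$ (elliptic solve for $z_2$ to match $\boldsymbol{b}_{|t=0}$, then parabolic solve for $z_1$), whereas you produce a preimage in $\boldsymbol{E}_T$ via $L_{T,\mathcal{E}}^{-1}(0,\mathfrak{b},\psi)$ with any compatible $\psi$ and then project onto $\boldsymbol{Z}_T$ along the kernel $\boldsymbol{X}_T$---logically equivalent, and your extra care about the smoothness of $\mathcal{G}$ via Proposition~\ref{Banachalgebra} is a detail the paper leaves implicit.
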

\begin{proof}
	We let 
	\begin{equation*}
	\boldsymbol{Y}_T:=W_p^{1-\nicefrac{1}{2p}}\left((0,T);(\mathbb{R}^n)^5\right)\times W_p^{\nicefrac{1}{2}-\nicefrac{1}{2p}}\left((0,T);\mathbb{R}^n\right)
	\end{equation*}
	and consider the operator
	\begin{align*}
	F:\boldsymbol{X}_T\oplus \boldsymbol{Z}_T&\to \boldsymbol{Y}_T\,,\\
	(\boldsymbol{x},\boldsymbol{z}) &\mapsto \mathcal{G}\left(\mathcal{E}\sigma+\boldsymbol{x}+\boldsymbol{z}\right)\,.
	\end{align*}
	By definition of $\mathcal{E}\sigma$ we have that $F(0,0)=0$. We observe that $DF_{|(0,0)}(0,\boldsymbol{z})=\mathcal{B}_{T,\mathcal{E}}(\boldsymbol{z})$. To apply the implicit function theorem we have to show that 
	\begin{equation*}
	\mathcal{B}_{T,\mathcal{E}}: \boldsymbol{Z}_T\to \boldsymbol{Y}_T
	\end{equation*}
	is an isomorphism. The map is injective as 
	$\ker{\mathcal{B}_{T,\mathcal{E}}}\cap \boldsymbol{Z}_T=\boldsymbol{X}_T\cap \boldsymbol{Z}_T=\{0\}$.
	Given $\boldsymbol{b}\in \boldsymbol{Y}_T$ we let $z_2:=\tilde{L}^{-1}(0,\boldsymbol{b}_{|t=0})\in Z_T^2$
	and
	 $z_1:=L_{T,\mathcal{E}}^{-1}(0,\boldsymbol{b}-\mathcal{B}_{T,\mathcal{E}}(z_2))\in Z^1_T$ and observe that $z_1+z_2\in\boldsymbol{Z}_T$ satisfies
	\begin{equation*}
	\mathcal{B}_{T,\mathcal{E}}(z_1+z_2)=\mathcal{B}_{T,\mathcal{E}}(z_1)+\mathcal{B}_{T,\mathcal{E}}(z_2)=\boldsymbol{b}-\mathcal{B}_{T,\mathcal{E}}(z_2)+\mathcal{B}_{T,\mathcal{E}}(z_2)=\boldsymbol{b}\,.
	\end{equation*}
	The implicit function theorem implies that there exist neighbourhoods $U$ and $W$ of $0$ in $\boldsymbol{X}_T$ and $\boldsymbol{Z}_T$, respectively, and a function $\varrho:U\to W$ with $\varrho(0)=0$ such that for a neighbourhood $\tilde{V}$ of $0$ in $\boldsymbol{E}_T$, it holds
	\begin{equation*}
	\{\boldsymbol{u}+\varrho(\boldsymbol{u}):\boldsymbol{u}\in U\}=\{\boldsymbol{x}+\boldsymbol{z}\in\boldsymbol{E}_T: F(\boldsymbol{x},\boldsymbol{z})=0\}\cap \tilde{V}\,.
	\end{equation*}
	To show that $(D\varrho)_{|0}=0$ we let $\boldsymbol{u}\in \boldsymbol{X}_T$ be arbitrary. Due to $(D\varrho)_{|0}:\boldsymbol{X}_T\to \boldsymbol{Z}_T$ we obtain $(D\varrho)_{|0}\boldsymbol{u}\in \boldsymbol{Z}_T$. Hence it is enough to show that $(D\varrho)_{|0}\boldsymbol{u}$ lies also in $\boldsymbol{X}_T$. To this end we differentiate the identity
	\begin{equation*}
	0=F(\delta \boldsymbol{u},\varrho(\delta \boldsymbol{u}))=\mathcal{G}\left(\mathcal{E}\sigma+\delta \boldsymbol{u}+\varrho(\delta \boldsymbol{u})\right)
	\end{equation*}
	with respect to $\delta$ and obtain
	\begin{equation*}
	0=\frac{\mathrm{d}}{\mathrm{d}\delta}\mathcal{G}\left(\mathcal{E}\sigma+\delta \boldsymbol{u}+\varrho(\delta \boldsymbol{u})\right)_{|\delta=0}=\left(D\mathcal{G}\right)(\mathcal{E}\sigma)(\boldsymbol{u}+(D\varrho)_{|0}\boldsymbol{u})=\mathcal{B}_{T,\mathcal{E}}(\boldsymbol{u}+(D\varrho)_{|0}\boldsymbol{u})\,.
	\end{equation*}
	This implies $\boldsymbol{u}+(D\varrho)_{|0}\boldsymbol{u}\in\ker\mathcal{B}_{T,\mathcal{E}}=\boldsymbol{X}_T$ and thus $(D\varrho)_{|0}\boldsymbol{u}\in \boldsymbol{X}_T$. 
\end{proof}
With this result we can finally start the proof of the parabolic smoothing. We will first derive higher time regularity of the solution (this is actually the classical parameter trick argument by Angenent), and will then get from this higher regularity in space using the parabolic problem and finally start a bootstrap procedure.
\begin{prop}[Higher time regularity of solutions to the Special Flow]\label{propositionhighertimeregularityspecialflow}\ \\
	Let $\mathcal{E}\sigma\in\boldsymbol{E}_T$ be a solution to the Special Flow in $[0,T]$ with $T>0$ and initial value $\sigma\in W_p^{2-\nicefrac{2}{p}}\left((0,1);(\mathbb{R}^n)^3\right)$. 
	Then we have for all $\tilde{t}\in(0,T]$ the increased time regularity
		\begin{equation}\label{EquationHigherTimeRegularity}
		\partial_t(\mathcal{E}{\sigma})\in \boldsymbol{E}_{T}\big|_{[\tilde{t},T]}\,.
		\end{equation}
\end{prop}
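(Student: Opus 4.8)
The plan is to run Angenent's parameter trick~\cite{angen3} in the variant for fully nonlinear boundary conditions of~\cite[Section~4]{michi}, adapted to the $L_p$-maximal-regularity setting. Fix $\tilde t\in(0,T]$. Since $(\mathcal E\sigma)(T)\in W_p^{2-\nicefrac{2}{p}}\big((0,1);(\mathbb R^n)^3\big)$ is a regular parametrisation satisfying the concurrency and $120^\circ$-angle conditions (which hold for all $t$ because $\boldsymbol E_T\hookrightarrow C([0,T];C^{1+\alpha})$ by Theorem~\ref{embeddingBUC}), it is an admissible initial parametrisation; Theorem~\ref{short time existence} gives a solution of the Special Flow on $[T,T_\ast]$ with this datum, $T_\ast:=T+\vartheta$, and concatenating it with $\mathcal E\sigma$ yields a solution $\gamma_\ast\in\boldsymbol E_{T_\ast}$ of the Special Flow on $[0,T_\ast]$ that equals $\mathcal E\sigma$ on $[0,T]$ (the concatenation lies in $\boldsymbol E_{T_\ast}$ because the spatial traces at $t=T$ agree, so $\partial_t$ acquires no singular part there). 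Then $[\tilde t,T]\Subset(0,T_\ast)$, and it suffices to show $\partial_t\gamma_\ast\in\boldsymbol E_{T_\ast}\big|_{[\tilde t,T]}$.

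Next I would pick a family of $C^\infty$-diffeomorphisms $\varphi_\lambda\colon[0,T_\ast]\to[0,T_\ast]$, $\lambda\in B_\delta(1)\subset\mathbb R$, depending $C^1$ (indeed real-analytically) on $\lambda$, with $\varphi_1=\mathrm{id}$, $\varphi_\lambda(0)=0$, $\varphi_\lambda(T_\ast)=T_\ast$, $\varphi_\lambda\equiv\mathrm{id}$ near $0$, $\varphi_\lambda'>0$, and $\chi:=\partial_\lambda\big|_{\lambda=1}\varphi_\lambda\geq c>0$ on $[\tilde t,T]$. Put $(\gamma_\ast)_\lambda(t,x):=\gamma_\ast(\varphi_\lambda(t),x)$; by Lemma~\ref{stableunderrepara} this lies in $\boldsymbol E_{T_\ast}$, has initial value $\sigma$, satisfies $\mathcal G\big((\gamma_\ast)_\lambda\big)=0$, and solves the scaled flow $\gamma_t=\varphi_\lambda'(t)\,\gamma_{xx}/|\gamma_x|^2$. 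By Lemma~\ref{LemmaParametrizationofNonLinearBoundaryConditions}, after shrinking $\delta$, $(\gamma_\ast)_\lambda=\gamma_\ast+\boldsymbol u(\lambda)+\varrho(\boldsymbol u(\lambda))$ for a unique $\boldsymbol u(\lambda)$ in a neighbourhood $U\subset\boldsymbol X_{T_\ast}=\ker\mathcal B_{T_\ast,\mathcal E}$ of $0$, with $\boldsymbol u(1)=0$. Writing $\gamma_{\boldsymbol u}:=\gamma_\ast+\boldsymbol u+\varrho(\boldsymbol u)$, define
\begin{equation*}
\Phi(\boldsymbol u,\lambda):=\Bigl(\partial_t\gamma_{\boldsymbol u}-\varphi_\lambda'\,\tfrac{(\gamma_{\boldsymbol u})_{xx}}{|(\gamma_{\boldsymbol u})_x|^2}\ ,\ (\gamma_{\boldsymbol u})_{|t=0}\Bigr)\ \in\ L_p\big((0,T_\ast);L_p((0,1);(\mathbb R^n)^3)\big)\times\mathcal M\,,
\end{equation*}
where $\mathcal M$ is the (nonlinear) manifold of admissible initial parametrisations near $\sigma$; $\Phi$ maps into $L_p\times\mathcal M$ precisely because $\mathcal G(\gamma_{\boldsymbol u})=0$. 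Using $\boldsymbol E_{T_\ast}\hookrightarrow C([0,T_\ast];C^{1+\alpha})$ (so $|(\gamma_{\boldsymbol u})_x|$ is bounded below near $\gamma_\ast$), Proposition~\ref{Banachalgebra}, and the smooth dependence of $\varphi_\lambda'$ and $\varrho$, the map $\Phi$ is well defined and $C^1$, with $\Phi(0,1)=(0,\sigma)$ and $\Phi(\boldsymbol u(\lambda),\lambda)=(0,\sigma)$.

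The key step is to show that $D_{\boldsymbol u}\Phi(0,1)$ is an isomorphism onto $L_p\times T_\sigma\mathcal M$. Since $(D\varrho)_{|0}=0$ (Lemma~\ref{LemmaParametrizationofNonLinearBoundaryConditions}), for $\boldsymbol v\in\boldsymbol X_{T_\ast}$ one computes $D_{\boldsymbol u}\Phi(0,1)\boldsymbol v=\big(\partial_t\boldsymbol v-\mathcal A_{T_\ast,\mathcal E}(\boldsymbol v),\ \boldsymbol v_{|t=0}\big)$, and $T_\sigma\mathcal M$ is the linear compatibility space, which is exactly the range of $\boldsymbol v\mapsto\boldsymbol v_{|t=0}$ on $\boldsymbol X_{T_\ast}$. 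This is the operator $L_{T_\ast,\mathcal E}$ of Remark~\ref{RemarkExistenceAnalysisLinearisationinEsigma} restricted to $\boldsymbol X_{T_\ast}$ (its boundary component vanishing there); by Remark~\ref{RemarkExistenceAnalysisLinearisationinEsigma} — parabolicity and the Lopatinskii--Shapiro conditions carry over from Lemma~\ref{LemmaLopatinskisShapiroconditions} to the linearisation in $\mathcal E\sigma$, so Theorem~\ref{exlin} applies — together with the splitting $\boldsymbol E_{T_\ast}=\boldsymbol X_{T_\ast}\oplus\boldsymbol Z_{T_\ast}$, it is an isomorphism. The implicit function theorem then shows $\lambda\mapsto\boldsymbol u(\lambda)\in\boldsymbol X_{T_\ast}$ is $C^1$ near $\lambda=1$, hence $\lambda\mapsto(\gamma_\ast)_\lambda\in\boldsymbol E_{T_\ast}$ is differentiable at $1$ with derivative $\boldsymbol u'(1)\in\boldsymbol X_{T_\ast}\subset\boldsymbol E_{T_\ast}$ (using $(D\varrho)_{|0}=0$ and $\boldsymbol u(1)=0$). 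On the other hand $\gamma_\ast\in W_p^1((0,T_\ast);L_p)$ and $\varphi_\lambda\to\mathrm{id}$ smoothly give $(\lambda-1)^{-1}\big((\gamma_\ast)_\lambda-\gamma_\ast\big)\to\chi\,\partial_t\gamma_\ast$ in $L_p\big((0,T_\ast)\times(0,1);(\mathbb R^n)^3\big)$, so by uniqueness of limits $\chi\,\partial_t\gamma_\ast=\boldsymbol u'(1)\in\boldsymbol E_{T_\ast}$. On $[\tilde t,T]$, where $\chi$ is smooth and $\geq c>0$, multiplication by $1/\chi$ preserves $W_p^{1,2}$, whence $\partial_t\gamma_\ast\in\boldsymbol E_{T_\ast}\big|_{[\tilde t,T]}$, and restricting to $[0,T]$ gives~\eqref{EquationHigherTimeRegularity}.

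I expect the main difficulty to be the isomorphism property of $D_{\boldsymbol u}\Phi(0,1)$. Unlike the classical parameter trick one linearises around a solution $\mathcal E\sigma$ of only $\boldsymbol E_T$-regularity, so the lower-order coefficient of $\mathcal A_{T,\mathcal E}$ involves $(\mathcal E\sigma)_{xx}\in L_p$; one must verify that the resulting parabolic problem still enjoys $L_p$-maximal regularity and satisfies the Lopatinskii--Shapiro condition, which is the content of Remark~\ref{RemarkExistenceAnalysisLinearisationinEsigma}. The second delicate point, already built into the construction, is that the fully nonlinear angle condition cannot be fed directly into the implicit function theorem (its linearised compatibility conditions at $t=0$ would not match); this is circumvented by working over the linear boundary conditions via the parametrisation $\varrho$ of Lemma~\ref{LemmaParametrizationofNonLinearBoundaryConditions} and by taking the compatibility manifold $\mathcal M$ as the target of the initial-trace component of $\Phi$. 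The remaining points — well-definedness and $C^1$-regularity of $\Phi$, the chain rule of Lemma~\ref{stableunderrepara}, the gluing argument, and the $L_p$-convergence of the difference quotients — are routine given Theorems~\ref{embeddingBUC} and~\ref{exlin} and Proposition~\ref{Banachalgebra}.
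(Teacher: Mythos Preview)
Your proposal is correct and follows the same parameter-trick strategy as the paper, with two bookkeeping differences. First, you extend the solution past $T$ and use time diffeomorphisms $\varphi_\lambda$ that are the identity near $0$; the paper instead uses the bare scaling $(\mathcal E\sigma)_\lambda(t,x):=(\mathcal E\sigma)(\lambda t,x)$ and obtains $t\,\partial_t(\mathcal E\sigma)=\partial_\lambda(\mathcal E\sigma)_\lambda\big|_{\lambda=1}\in\boldsymbol E_T$ directly, from which~\eqref{EquationHigherTimeRegularity} follows by dividing by $t\geq\tilde t>0$ (your extension step is arguably more careful about the domain issue for $\lambda>1$). Second, you send the initial-trace component of $\Phi$ into the \emph{nonlinear} manifold $\mathcal M$ of admissible data, which forces you through the Banach-manifold implicit function theorem; the paper avoids this by noting that for $\boldsymbol u\in\boldsymbol X_T=\ker\mathcal B_{T,\mathcal E}$ one automatically has $\boldsymbol u_{|t=0}$ in the \emph{linear} compatibility space $\boldsymbol I$, so the map $G(\lambda,\psi,\boldsymbol u)=(\boldsymbol u_{|t=0}-\psi,\ \partial_t\overline\varrho(\boldsymbol u)-\lambda\,\overline\varrho(\boldsymbol u)_{xx}/|\overline\varrho(\boldsymbol u)_x|^2)$ takes values in the Banach space $\boldsymbol I\times L_p$ and the ordinary implicit function theorem applies. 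Your identification of $D_{\boldsymbol u}\Phi(0,1)$ with the restriction of $L_{T,\mathcal E}$ to $\boldsymbol X_T$, and the appeal to Remark~\ref{RemarkExistenceAnalysisLinearisationinEsigma} for its invertibility, match the paper exactly.
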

\begin{proof}
	We consider the space
	\begin{align*}
	\boldsymbol{I}:=&\left\{\psi\in W_p^{2-\nicefrac{2}{p}}\left((0,1);(\mathbb{R}^n)^3\right): \psi(1)=0\,, \psi^1(0)=\psi^2(0)=\psi^3(0)\,,\right.\\
	&\left. \quad \sum_{i=1}^3\frac{\psi^i_x(0)}{\vert\sigma^i_x(0)\vert}-\frac{\sigma^i_x(0)\left\langle\psi^i_x(0),\sigma^i_x(0)\right\rangle}{\vert\sigma^i_x(0)\vert ^3}=0\right\}\,.
	\end{align*}
	We let $U$, $V$ and $\varrho$ be as in the previous Lemma and define $\overline{\varrho}(\boldsymbol{u}):=\mathcal{E}\sigma+\boldsymbol{u}+\varrho(\boldsymbol{u})$.
	For some small $\varepsilon\in(0,1)$ we consider the map 
	\begin{align*}
	G:(1-\varepsilon,1+\varepsilon)\times \boldsymbol{I}\times \boldsymbol{X}_T&\to \boldsymbol{I}\times L_p\left((0,T)\times(0,1);(\mathbb{R}^n)^3\right)\,,\\
	\left(\lambda,\psi,\boldsymbol{u}\right)&\mapsto \left(\boldsymbol{u}_{|t=0}-\psi,\partial_t \overline{\varrho}(\boldsymbol{u})-\lambda\frac{\overline{\varrho}(\boldsymbol{u})_{xx}}{\vert\overline{\varrho}(\boldsymbol{u})_x\vert ^2}\right)\,.
	\end{align*}
	Notice that $G(1,0,0)=0$.
	Due to $(D\varrho)_{|0}=0$ the Fr\'{e}chet derivative 
	\begin{equation*}
	(DG)_{|(1,0,0)}(0,0,\cdot):\boldsymbol{X}_T\to \boldsymbol{I}\times L_p\left((0,T)\times(0,1);(\mathbb{R}^n)^3\right)
	\end{equation*}
	is given by
	\begin{equation*}
	(DG)_{|(1,0,0)}(0,0,\boldsymbol{u})=\left(\boldsymbol{u}_{|t=0},\partial_t \boldsymbol{u}-\mathcal{A}_{T,\mathcal{E}}(\boldsymbol{u})\right)\,.
	\end{equation*}
	As explained in Remark~\ref{RemarkExistenceAnalysisLinearisationinEsigma} we have that  $(DG)_{|(1,0,0)}(0,0,\cdot)$ is an isomorphism. Hence the implicit function theorem implies the existence of neighbourhoods $\mathcal{U}$ of $(1,0)$ in $(1-\varepsilon,1+\varepsilon)\times\boldsymbol{I}$ and $\mathcal{V}$ of $0$ in $\boldsymbol{X}_T$ and a function $\zeta: \mathcal{U}\to\mathcal{V}$ with $\zeta((1,0))=0$ and
	\begin{equation*}
	\{(\lambda,\psi,\boldsymbol{u})\in \mathcal{U}\times\mathcal{V}:G(\lambda,\psi,\boldsymbol{u})=0\}=\{(\lambda,\psi,\zeta(\lambda,\psi)):(\lambda,\psi)\in\mathcal{U}\}\,.
	\end{equation*}
	Consider now the map $P:\boldsymbol{E}_T\to\boldsymbol{X}_T$ given by $P(\gamma):=P_{\boldsymbol{X}_T}(\gamma-\mathcal{E}\sigma)$ with $P_{\boldsymbol{X}_T}(\eta)=\boldsymbol{u}$ for the unique partition $\eta=\boldsymbol{u}+\overline{\boldsymbol{u}}\in \boldsymbol{X}_T\oplus\boldsymbol{Z}_T$. Clearly, we have that $\overline{\varrho}(P(\gamma))=\gamma$ for all $\gamma$ in the neighbourhood $V$ constructed in Lemma~\ref{LemmaParametrizationofNonLinearBoundaryConditions}. Given $\lambda$ close to $1$ we consider the time-scaled function
	\begin{equation*}
	(\mathcal{E}{\sigma})_
	{\lambda}(t,x):=(\mathcal{E}{\sigma})(\lambda t,x)\,.
	\end{equation*}
	By definition this satisfies for $\psi:=P((\mathcal{E}{\sigma})_\lambda)\big|_{t=0}$
	\begin{equation*}
	G(\lambda,\psi,P((\mathcal{E}{\sigma})_\lambda))=0\,.
	\end{equation*}
	By uniqueness we conclude that
	\begin{equation*}
	P((\mathcal{E}{\sigma})_{\lambda})=\zeta(\lambda,\psi)
	\end{equation*}
	and therefore
	\begin{equation*}
	(\mathcal{E}{\sigma})_{\lambda}=\bar{\varrho}(\zeta(\lambda,\psi))\,.
	\end{equation*}
	As both $\zeta$ and $\bar{\varrho}$ are smooth, this shows that $(\mathcal{E}{\sigma})_{\lambda}$ is a smooth function in $\lambda$ with values in $\boldsymbol{E}_T$. This implies now
	\begin{align}\label{EquationHilfsequionTimeRegularity}
	t\partial_t(\mathcal{E}{\sigma})=\partial_{\lambda}((\mathcal{E}{\sigma})_{\lambda})\big|_{\lambda=1}\in \boldsymbol{E}_T
	\end{align}
	from which we directly conclude~\eqref{EquationHigherTimeRegularity}.
\end{proof}
Next, we want to derive higher regularity in space for our solution. But this follows almost immediately from the associated ODE we have at a fixed time.
\begin{cor}[Higher space regularity of solutions to the Special Flow]\label{corollarhigherspaceregularity}\ \\
Let $\mathcal{E}\sigma\in\boldsymbol{E}_T$ be a solution to the Special Flow in $[0,T]$ with $T>0$ and initial value $\sigma\in W_p^{2-\nicefrac{2}{p}}\left((0,1);(\mathbb{R}^n)^3\right)$. 
Given $\tilde{t}\in(0,T]$ we have for almost all $t\in[\tilde{t},T]$ that 
	\begin{equation*}
	(\mathcal{E}\sigma)(t)\in W^{3}_{\nicefrac{p}{2}}((0,1);(\R^n)^3)\,.
	\end{equation*}	
	In particular, there is an $\alpha>0$ such that $(\mathcal{E}\sigma)(t)\in C^{2+\alpha}((0,1);(\R^n)^3)$ for almost all $t\in[\tilde{t},T]$.
\end{cor}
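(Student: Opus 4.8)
The plan is to extract the extra spatial regularity directly from the evolution equation, exploiting the time derivative just gained in Proposition~\ref{propositionhighertimeregularityspecialflow}. First I would fix a time $t$ in the intersection of the two full-measure subsets of $[\tilde t,T]$ on which, respectively, $(\mathcal{E}\sigma)(t)\in W_p^2\left((0,1);(\mathbb{R}^n)^3\right)$ (which holds since $\mathcal{E}\sigma\in\boldsymbol{E}_T\subset L_p\left((0,T);W_p^2\left((0,1);(\mathbb{R}^n)^3\right)\right)$) and $\partial_t(\mathcal{E}\sigma)(t)\in W_p^2\left((0,1);(\mathbb{R}^n)^3\right)$ (which holds by Proposition~\ref{propositionhighertimeregularityspecialflow}, since there $\partial_t(\mathcal{E}\sigma)\in\boldsymbol{E}_T\big|_{[\tilde t,T]}\subset L_p\left((\tilde t,T);W_p^2\left((0,1);(\mathbb{R}^n)^3\right)\right)$). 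Since $\lvert(\mathcal{E}\sigma)^i_x\rvert\neq 0$, the evolution equation in~\eqref{problema}, valid for almost every $x\in(0,1)$, can be rewritten at this fixed $t$ as the pointwise identity
\[
(\mathcal{E}\sigma)^i_{xx}(t,x)=\bigl\lvert(\mathcal{E}\sigma)^i_x(t,x)\bigr\rvert^2\,\partial_t(\mathcal{E}\sigma)^i(t,x)\,,\qquad i\in\{1,2,3\}\,.
\]

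The next step is to observe that the right-hand side belongs to $W_p^1\left((0,1);\mathbb{R}^n\right)$ for such $t$. Indeed $(\mathcal{E}\sigma)^i_x(t)\in W_p^1\left((0,1);\mathbb{R}^n\right)\hookrightarrow C\left([0,1];\mathbb{R}^n\right)$, so by the chain rule for the smooth map $v\mapsto\lvert v\rvert^2$ the function $x\mapsto\lvert(\mathcal{E}\sigma)^i_x(t,x)\rvert^2$ lies in $W_p^1\left((0,1);\mathbb{R}\right)$, while $\partial_t(\mathcal{E}\sigma)^i(t)\in W_p^2\left((0,1);\mathbb{R}^n\right)\subset W_p^1\left((0,1);\mathbb{R}^n\right)$. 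Because $W_p^1\left((0,1)\right)\hookrightarrow C\left([0,1]\right)$ is closed under products via the Leibniz rule (the integer analogue of Proposition~\ref{Banachalgebra}), the product lies in $W_p^1\left((0,1);\mathbb{R}^n\right)$, hence a fortiori in $W_{p/2}^1\left((0,1);\mathbb{R}^n\right)$. Therefore $(\mathcal{E}\sigma)^i_{xx}(t)\in W_{p/2}^1\left((0,1);\mathbb{R}^n\right)$, i.e. $(\mathcal{E}\sigma)(t)\in W_{p/2}^3\left((0,1);(\mathbb{R}^n)^3\right)$ for almost every $t\in[\tilde t,T]$. The additional claim follows from the Sobolev Embedding Theorem: since $p>3$ we have $3-\tfrac{1}{p/2}=3-\tfrac{2}{p}>2$, so $W_{p/2}^3\left((0,1)\right)\hookrightarrow C^{2+\alpha}\left([0,1]\right)$ with $\alpha:=1-\tfrac{2}{p}\in(0,1)$.

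I do not expect a genuine obstacle here: the substantive regularisation has already been paid for in Proposition~\ref{propositionhighertimeregularityspecialflow}, and what remains is to convert the parabolic equation into an ordinary identity in $x$ at a fixed generic time and read off the gain. The only points requiring some care are the bookkeeping of ``for almost every $t$'' when intersecting the two good sets of times, and verifying that the chain rule and product rule above are legitimate in $W_p^1\left((0,1)\right)$ rather than merely formal, which is immediate from the one-dimensional embedding $W_p^1\hookrightarrow C^0$. This corollary serves as the base case for the subsequent bootstrap, which will alternate Proposition~\ref{propositionhighertimeregularityspecialflow}-type time regularity with this elliptic gain in space to reach full space--time smoothness.
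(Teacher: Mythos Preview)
Your proposal is correct and follows essentially the same route as the paper: freeze a generic time, read the PDE as the identity $(\mathcal{E}\sigma)^i_{xx}=\lvert(\mathcal{E}\sigma)^i_x\rvert^2\,\partial_t(\mathcal{E}\sigma)^i$, feed in the extra regularity of $\partial_t(\mathcal{E}\sigma)$ from Proposition~\ref{propositionhighertimeregularityspecialflow}, and use the one--dimensional product/Banach--algebra structure of $W^1$ to conclude $(\mathcal{E}\sigma)^i_{xx}\in W^1_{p/2}$, with the $C^{2+\alpha}$ statement then coming from Sobolev embedding. The only cosmetic difference is that you observe $\lvert(\mathcal{E}\sigma)^i_x\rvert^2\in W^1_p$ (via $W^1_p\hookrightarrow L_\infty$) rather than the paper's weaker $W^1_{p/2}$ obtained by plain H\"older, but this does not affect the argument.
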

\begin{proof}
	Considering $\partial_t((\mathcal{E}\sigma)^i)(t)$ as given functions $\mathfrak{f}^i\in W^{1}_p((0,1);\R^n)$ we see that $(\mathcal{E}\sigma)^i(t,\cdot)$ solves
	\begin{align*}
	\frac{(\mathcal{E}\sigma)^i_{xx}(t,\cdot)}{\left\vert(\mathcal{E}\sigma)^i_x(t,\cdot)\right\vert^2}=\mathfrak{f}^i
	\end{align*}
	in $W^{1}_p((0,1);(\R^2)^3)$ for almost every $t\in[\tilde{t},T]$. As for almost every $t\in[\tilde{t},T]$ the function $(\mathcal{E}\sigma)^i(t,\cdot)$ is in $W^{2}_p((0,1);\R^n)$, we know that $\left\vert(\mathcal{E}\sigma)^i_x(t,\cdot)\right\vert^2\in W^{1,\nicefrac{p}{2}}((0,1);\R)$ for almost every $t\in[\tilde{t},T]$ and thus we can multiply the above equations to see that
	\begin{align*}
	(\mathcal{E}\sigma)^i_{xx}(t,\cdot)=\tilde{\mathfrak{f}}^i\in W^{1}_{\nicefrac{p}{2}}((0,1);\R^n)
	\end{align*}
	for almost every $t\in[\tilde{t},T]$ with new given inhomogeneities $\tilde{\mathfrak{f}}^i$. Note that we used here that with our choice of $p$ the Sobolev space $W^{1}_{\nicefrac{p}{2}}$ is indeed a Banach algebra on one dimensional domains. But from the last equation we directly conclude $(\mathcal{E}\sigma)^i\in W^{3}_{\nicefrac{p}{2}}((0,1);(\R^n)^3)$. The second claim is just a direct consequence of the Sobolev embeddings.
\end{proof}
With the two previous results we are now able to start a bootstrap procedure.
\begin{thm}[Smoothness of solutions to the Special Flow]\label{smoothnessthm}\ \\
	Let $\mathcal{E}\sigma\in\boldsymbol{E}_T$ be a solution to the Special Flow in $[0,T]$ with $T>0$ and initial value $\sigma\in W_p^{2-\nicefrac{2}{p}}\left((0,1);(\mathbb{R}^n)^3\right)$. Then $\mathcal{E}\sigma$ is smooth on $ [\tilde{t}, T]\times[0,1]$ for all  $\tilde{t}\in (0,T)$.
\end{thm}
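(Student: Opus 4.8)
The plan is to combine the two preparatory results of this section with a bootstrap in the spatial variable. First I would extract \emph{all} time derivatives of the solution. A closer look at the proof of Proposition~\ref{propositionhighertimeregularityspecialflow} shows that it does not merely produce $\partial_t(\mathcal{E}\sigma)$: the identity $(\mathcal{E}\sigma)_\lambda=\bar\varrho(\zeta(\lambda,\psi))$ exhibits $\lambda\mapsto(\mathcal{E}\sigma)_\lambda$ as a \emph{smooth} map into $\boldsymbol{E}_T$, so that $\partial_\lambda^k\big|_{\lambda=1}(\mathcal{E}\sigma)_\lambda=t^k\,\partial_t^k(\mathcal{E}\sigma)$ belongs to $\boldsymbol{E}_T$ for every $k\in\mathbb{N}_0$. (To run the argument with $\lambda$ in a genuine two-sided neighbourhood of $1$ one needs $(\mathcal{E}\sigma)_\lambda$ to be defined up to time $T$; since $(\mathcal{E}\sigma)(T)\in W_p^{2-\nicefrac{2}{p}}$ is still an admissible initial parametrisation, Theorem~\ref{short time existence} lets us first glue on a short Special Flow solution and extend $\mathcal{E}\sigma$ to $[0,T+\delta]$ for some $\delta>0$.) Since $t\mapsto t^{-k}$ is a smooth multiplier on $W_p^{1,2}((\tilde t,T)\times(0,1))$ for $\tilde t>0$, this gives
\[
\partial_t^k(\mathcal{E}\sigma)\in\boldsymbol{E}_T\big|_{[\tilde t,T]}\qquad\text{for all }k\in\mathbb{N}_0\text{ and all }\tilde t\in(0,T),
\]
and Theorem~\ref{embeddingBUC} turns this into joint continuity on $[\tilde t,T]\times[0,1]$ of every mixed derivative $\partial_t^k\partial_x^l(\mathcal{E}\sigma)$ with $l\in\{0,1\}$.

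Next I would upgrade the spatial regularity by feeding this back into the equation, in the spirit of Corollary~\ref{corollarhigherspaceregularity} but iterated. Writing the Special Flow~\eqref{problema} as $\partial_x^2(\mathcal{E}\sigma)^i=\lvert\partial_x(\mathcal{E}\sigma)^i\rvert^2\,\partial_t(\mathcal{E}\sigma)^i$, I claim by induction on $m\in\mathbb{N}_0$ that $\partial_t^k\partial_x^l(\mathcal{E}\sigma)$ is continuous on $[\tilde t,T]\times[0,1]$ for all $k\in\mathbb{N}_0$ and all $l\le m+1$; the case $m=0$ is the previous paragraph. For the inductive step, differentiate the displayed identity $m$ times in $x$ and $k$ times in $t$: the Leibniz rule writes $\partial_t^k\partial_x^{m+2}(\mathcal{E}\sigma)^i$ as a universal polynomial in the derivatives $\partial_t^a\partial_x^b(\mathcal{E}\sigma)^i$ with $b\le m+1$ (the factor $\partial_t(\mathcal{E}\sigma)^i$ only contributing terms of the form $\partial_t^{a+1}\partial_x^b$ with $b\le m$), each of which is continuous by the inductive hypothesis; working in one space dimension, the Banach-algebra and composition properties recorded in Section~\ref{preliminaries} guarantee these manipulations are legitimate and that the resulting weak derivatives coincide with the classical ones. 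Hence $\partial_t^k\partial_x^{m+2}(\mathcal{E}\sigma)$ is continuous, which closes the induction and shows that every mixed partial derivative of $\mathcal{E}\sigma$ is continuous on $[\tilde t,T]\times[0,1]$; this is precisely the assertion $\mathcal{E}\sigma\in C^\infty([\tilde t,T]\times[0,1])$.

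I expect the genuine obstacles to lie in the two preparatory statements rather than in the assembly above: the parameter trick adapted to the fully nonlinear angle condition (Lemma~\ref{LemmaParametrizationofNonLinearBoundaryConditions} and Proposition~\ref{propositionhighertimeregularityspecialflow}) and the pointwise-in-time elliptic step (Corollary~\ref{corollarhigherspaceregularity}) carry the analytic weight. Within the present theorem the two points that require care are, first, making sure the parameter trick really delivers \emph{all} time derivatives --- this uses smoothness, not just $C^1$-dependence, of $\lambda\mapsto(\mathcal{E}\sigma)_\lambda$, together with the small extension past $T$ needed to treat the closed interval up to the endpoint --- and, second, the bookkeeping of the Leibniz-rule induction, i.e.\ commuting mixed partial derivatives and identifying weak with classical derivatives at each stage. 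Both are routine here because the problem is one-dimensional in space, so the Sobolev embeddings of Theorem~\ref{embeddingBUC} and the Banach-algebra facts of Proposition~\ref{Banachalgebra} apply without difficulty.
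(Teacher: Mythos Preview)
Your argument is correct and takes a genuinely different route from the paper's. The paper uses Corollary~\ref{corollarhigherspaceregularity} to obtain $C^{2+\alpha}$ spatial regularity of $(\mathcal{E}\sigma)(t)$ at almost every positive time, then \emph{restarts} the Special Flow in parabolic H\"older spaces (importing the analogous short-time existence from~\cite{garckemenzelpluda2019willmore}) and re-runs the parameter trick there; iterating this passage to successively higher H\"older classes $C^{k+\alpha,(k+\alpha)/2}$ is the bootstrap. You instead exploit that the implicit-function step in Proposition~\ref{propositionhighertimeregularityspecialflow} already yields \emph{smooth} dependence of $(\mathcal{E}\sigma)_\lambda$ on $\lambda$ in $\boldsymbol{E}_T$, harvesting $t^k\partial_t^k(\mathcal{E}\sigma)\in\boldsymbol{E}_T$ for every $k$ in one shot, and then bootstrap purely in the spatial variable by differentiating $\partial_x^2(\mathcal{E}\sigma)^i=\lvert\partial_x(\mathcal{E}\sigma)^i\rvert^2\,\partial_t(\mathcal{E}\sigma)^i$ and applying the Leibniz rule. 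Your approach is more self-contained --- it never leaves the Sobolev framework of the paper and avoids citing an external H\"older well-posedness result --- while the paper's approach has the advantage of following a standard template in which the compatibility conditions at each stage are read off directly from the previous regularity level. Your observation about extending $\mathcal{E}\sigma$ slightly past $T$ so that $(\mathcal{E}\sigma)_\lambda$ is defined for $\lambda>1$ is a point the paper leaves implicit, and your handling of it via Theorem~\ref{short time existence} is the natural fix.
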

\begin{proof}
	Due to Corollary \ref{corollarhigherspaceregularity} we can use $(\mathcal{E}\sigma)(t)$ for almost all $t>0$ as initial data for a regularity result in parabolic H\"older space,
	cf.~\cite{garckemenzelpluda2019willmore} for such a result for the Willmore flow. As we checked that the Lopatinskii-Shapiro conditions are still valid in higher co-dimensions, the analysis works as in the planar case. Additionally, the needed compatibility conditions due to the zero order boundary conditions are guaranteed by the fact that $\partial_t(\mathcal{E}\sigma)$ lies in $C([\tilde{t},T]; C([0,1];(\R^n)^3)$. With this new maximal regularity result, which is the key argument in the proof of 
	Proposition~\ref{propositionhighertimeregularityspecialflow}, we can repeat the whole procedure to derive $C^{3+\alpha, (3+\alpha)/2}$-regularity. This starts now the bootstrapping yielding the desired smoothness result. Note that in every step the needed compatibility conditions are guaranteed by the fact that our flow already has the regularity related to these compatibility conditions (see for instance~\cite[Theorem 3.1]{mannovtor}). 
\end{proof}

In analogy to~\cite{garckemenzelpludawillmore} we may now use smoothness of the Special Flow to prove Theorem~\ref{existenceuniqueness}.

\begin{proof}[Proof of Theorem~\ref{existenceuniqueness}]
		The existence of maximal solutions and their geometric uniqueness are shown in Proposition~\ref{exmax}. Using smoothness of the Special Flow shown in Theorem~\ref{smoothnessthm} one may argue analogously to~\cite[Section 5.2, Section 7.2]{garckemenzelpludawillmore} to show that parametrising each curve $\mathbb{T}^i(t)$ with constant speed equal to the length of $\mathbb{T}^i(t)$ yields a global parametrisation $\gamma:[0,T_{max})\times[0,1]\to(\mathbb{R}^n)^3$ of the evolution that is smooth for positive times.
	\end{proof}


\section{Long Time Behaviour of the Motion by Curvature}\label{longtime}


\begin{proof}[Proof of Theorem~\ref{longtimebehavior}]
Let $\varepsilon\in (0,\nicefrac{T_{\max}}{1000})$
be fixed.
Suppose that $T_{\max}$ is finite and that
the two assertions $i)$ and $ii)$ are not fulfilled.
Let $\gamma=(\gamma^1,\gamma^2,\gamma^3):[0,T_{max})\times[0,1]\to(\mathbb{R}^n)^3$ be the parametrisation of the evolution such that each curve $\mathbb{T}^i(t)$ is parametrised with constant speed equal to its length $L(\mathbb{T}^i(t))$. As $\gamma$ is smooth on $[\varepsilon,T]$ for all positive $\varepsilon$ and all $T\in (\varepsilon, T_{\max})$,
hypothesis $ii)$ yields 
\begin{equation*}
\boldsymbol{\kappa}^i\in L^\infty\left((\varepsilon,T_{\max}); L^2((0,1);\mathbb{R}^n)\right)\,.
\end{equation*}
As $\boldsymbol{E}_T$ embeds continuously into $C\left([0,T];C^1([0,1];(\mathbb{R}^n)^3)\right)$, hypothesis $i)$ implies that the lengths $L(\mathbb{T}^i)$ of all three curves
composing the network are uniformly bounded away from zero in $[0,T_{\max})$.
Moreover, thanks to the gradient flow structure of the motion by curvature
the single lengths of the networks satisfy $L(\mathbb{T}^i(t))\leq L(\mathbb{T}_0)$ for all 
$t\in [0,T_{\max})$. In particular, we obtain for all $t\in[0,T_{max})$, $x\in[0,1]$,
\begin{equation}\label{lengthboundedbelow}
0<\mathfrak{c}\leq  \vert \gamma^i_{x}(t,x)\vert=L(\mathbb{T}^i(t))\leq {C}<\infty\,.
\end{equation}
With our choice of parametrisation the curvature can be expressed as
$\boldsymbol{\kappa}^i=\nicefrac{\gamma^i_{xx}}{L(\mathbb{T}^i)^2}$ from which we can infer for all $t\in[0,T_{max})$,
$$
\int_{0}^1 \vert \gamma^i_{xx}\vert^2\,\mathrm{d}x=
\left(L(\mathbb{T}^i)\right)^3\int_{\mathbb{T}}\vert \boldsymbol{\kappa}^i\vert^2\,\mathrm{d}s
\leq C<\infty\,.
$$
As the endpoints $P^1$, $P^2$, $P^3$ are fixed and as the single lengths $L(\mathbb{T}^i(t))$ are uniformly bounded from above in $[0,T_{max})$, there exists a constant $R>0$ such that for every $t\in [0,T_{\max})$
it holds $\mathbb{T}(t)\subset B_R(0)$. With the above arguments we conclude
$$
\gamma^i\in L^{\infty}((\varepsilon, T_{\max}); W^2_2((0,1);\mathbb{R}^n))\,.
$$
The Sobolev Embedding Theorem implies for all $p\in(3,6]$ the estimate
\begin{equation}\label{estimatenorm}
\sup_{t\in (\varepsilon,T_{max})}\left\lVert\gamma^i(t)\right\rVert_{W^{2-\nicefrac{2}{p}}_p\left((0,1);\mathbb{R}^n\right)}\leq\boldsymbol{C}
\end{equation}
for a uniform constant $\boldsymbol{C}>0$.
We note further that for all $\delta\in\left(0,\nicefrac{T_{max}}{4}\right)$ the parametrisation
$\gamma(T_{max}-\delta)$ is an admissible initial value
for the Special Flow~\eqref{problema}. Due to~\eqref{lengthboundedbelow} and~\eqref{estimatenorm} Theorem~\ref{short time existence} yields that there exists a uniform time $\boldsymbol{T}$ of existence of solutions to the Special Flow~\eqref{problema} for all initial values $\gamma(T_{max}-\delta)$ depending on $\boldsymbol{C}$ and $\mathfrak{c}$. Let $\delta:=\min\left\{\nicefrac{\boldsymbol{T}}{2},\nicefrac{T_{max}}{4}\right\}$. Then Theorem~\ref{short time existence} implies the existence of a solution $\eta=(\eta^1,\eta^2,\eta^3)$ with $\eta^i$ regular and 
\begin{equation*}
\eta^i\in W_p^1
\left((T_{max}-\delta, T_{max}+\delta);L_p\left((0,1);\mathbb{R}^n\right)\right)
\cap L_p\left((T_{max}-\delta, T_{max}+\delta);W_p^2\left((0,1);\mathbb{R}^n\right)\right)
\end{equation*}
to system~\eqref{problema} with $\eta\left(T_{max}-\delta\right)=\gamma\left(T_{max}-\delta\right)$.
The two parametrisations $\gamma$ and $\eta$ defined on $(0,T_{max}-\frac{\delta}{3})$ and $\left(T_{max}-\frac{\delta}{2},T_{max}+\delta\right)$, respectively, define a solution $(\widetilde{\mathbb{T}}(t))$ to the motion by curvature on the time interval $(0,T_{max}+\delta]$ with initial datum $\mathbb{T}_0$ coinciding with $\mathbb{T}$ on $(0,T_{max})$. This contradicts the maximality of $T_{max}$.
\end{proof}

\newpage

\bibliographystyle{amsplain}
\bibliography{MCFSobolev}

\end{document}